\documentclass[11pt]{article}

\usepackage[margin=.7in]{geometry}
\usepackage{amsfonts}
\usepackage[mathcal]{euscript}
\usepackage{amsmath}
\usepackage{amsthm}
\usepackage{graphicx}

\newcommand{\luoz}{\lambda_{1,0}}
\newcommand{\luzo}{\lambda_{0,1}}
\newcommand{\lu}[2]{\lambda_{#1,#2}}

\newcommand{\luij}{\lambda_{ij}}
\newcommand{\dx}{\partial_x}
\newcommand{\dy}{\partial_y}
\newcommand{\mfl}{\mathfrak{L}}
\newcommand{\mfn}{M}
\newcommand{\Lal}{\mathcal L_{\mfn,\alpha}}

\newcommand{\LA }{\mathcal L_{\mfn,\alpha}^A}
\newcommand{\LL }{\mathcal L_{\mfn,\alpha}^L}
\newcommand{\LN }{\mathcal L_{\mfn,\alpha}^N}

\newcommand{\Dop}[2]{\mathcal D^{(#1,#2)}}

\newcommand{\matC}{\mathsf C}
\newcommand{\matM}{\mathsf A^{\mathcal F}}
\newcommand{\VGPW}{\mathbb V_{\alpha,p,q}^{0}}
\newcommand{\cpxi}{\mathrm i}

\newcommand{\RHS}{N}

\newcommand{\defOp}{Consider a point $(x_0,y_0)\in\mathbb R^2$, a given $q\in\mathbb N^*$, a given $\mfn\in\mathbb N$, $\mfn\geq 2$, a given set of complex-valued functions $\alpha=\{ \alpha_{k,l}\in\mathcal C^{q-1} \text{ at }(x_0,y_0),0\leq k+l\leq \mfn \}$, and the corresponding partial differential operator $\Lal$. }
\newcommand{\defp}{Let  $p\in\mathbb N^*$ be the number of desired basis functions. }
\newcommand{\defParThm}{Consider an open set $\Omega\subset\mathbb R^2$, $(x_0,y_0)\in\Omega$, $\mfn=2$, a given $(n,p,q)\in\mathbb (N^*)^3$, $n\geq\mfn$, $q\geq n-1$ and a given set of complex-valued functions $\alpha=\{ \alpha_{k_1,k_2}\in\mathcal C^{n}(\Omega),0\leq k_1+k_2\leq \mfn \}$, the corresponding partial differential operator $\Lal$ and set of GPWs $\VGPW$. }
\newcommand{\capfig}[1]{GPW approximation of $u_#1$ by $u_a\in\VGPW$ with $p=2n+1$. We represent the $L^\infty$ error $\max_{(x_0,y_0)\in\Omega}\|u_{#1}-u_a\|_{L^\infty(\{(x,y)\in\mathbb R^2, |(x,y)-(x_0,y_0)|<h \})}$, for 50 random points $(x_0,y_0)\in\Omega_{#1}$.
We compare results for parameters satisfying Theorem \ref{thethm} hypotheses i.e. $q=\max (1,n-1)$ (Left panel), and for varying $q$ with fixed $n$ (Right panel).}

\newtheorem{thm}{Theorem}
\newtheorem{cor}{Corollary}
\newtheorem{prop}{Proposition}
\newtheorem{dfn}{Definition}
\newtheorem{lmm}{Lemma}
\newtheorem{hyp}{Hypothesis}
\newtheorem{rmk}{Remark}

\usepackage{algorithmicx}
\usepackage{algorithm}
\usepackage[noend]{algpseudocode}

\usepackage{framed}
\usepackage{xcolor} 
\usepackage{tikz,pgfplots}
\pgfplotsset{compat=newest,
legend style={
font=\footnotesize,
rounded corners=2pt,
}}
\pgfplotscreateplotcyclelist{mycolorlist}{%
blue,every mark/.append style={fill=blue!80!black},mark=*\\%
red,every mark/.append style={fill=red!80!black},mark=square*\\%
brown!60!black,every mark/.append style={fill=brown!80!black},mark=otimes*\\%
black,mark=star\\%
blue,every mark/.append style={fill=blue!80!black},mark=diamond*\\%
red,densely dashed,every mark/.append style={solid,fill=red!80!black},mark=*\\%
brown!60!black,densely dashed,every mark/.append style={
solid,fill=brown!80!black},mark=square*\\%
black,densely dashed,every mark/.append style={solid,fill=gray},mark=otimes*\\%
blue,densely dashed,mark=star,every mark/.append style=solid\\%
red,densely dashed,every mark/.append style={solid,fill=red!80!black},mark=diamond*\\%
}

\title{A roadmap for Generalized Plane Waves 
and their interpolation properties
}

\author{Lise-Marie Imbert-G\'erard, Guillaume Sylvand}

\begin{document}
\maketitle

\begin{abstract}
This work focuses on the study of partial differential equation (PDE) based basis function for Discontinuous Galerkin methods to solve numerically  wave-related boundary value problems with variable coefficients. To tackle problems with constant coefficients, wave-based methods have been widely studied in the literature: they rely on the concept of Trefftz functions, i.e. local solutions to the governing PDE, using oscillating basis functions rather than polynomial functions to represent the numerical solution. Generalized Plane Waves (GPWs) are an alternative developed to tackle problems with variable coefficients, in which case Trefftz functions are not available. In a similar way, they incorporate information on the PDE, however they are only approximate Trefftz functions since they don't solve the governing PDE exactly, but only an approximated PDE. 
Considering a new set of PDEs beyond the Helmholtz equation, we propose to set a roadmap for the construction and study of local interpolation properties of GPWs. Identifying carefully the various steps of the process, we provide an algorithm to summarize the construction of these functions, and establish necessary conditions to obtain high order interpolation properties of the corresponding basis.

\end{abstract}

\section{Introduction}
Trefftz methods are Galerkin type of methods that rely on function spaces of local solutions to the governing partial differential equations (PDEs). They were initially introduced in \cite{Trefftz,transl}, and the original idea was to use trial functions which satisfy the governing PDE to derive error bounds. They have been widely used in the engineering community \cite{kita} since the 60's, for instance for Laplace's equation \cite{mik}, to  the  biharmonic equation \cite{rokt} and to  elasticity \cite{Kup}.
 Later the general idea of taking advantage of analytical knowledge about the problem to build a good approximation space was used to develop numerical methods: in
the presence of corner and interface singularities \cite{fix,Sfix}, boundary layers, rough coefficients, elastic interactions \cite{PUM96,PUM97,Mel,babz}, wave propagation \cite{PUM97,DEM}.
In the context of boundary value problems (BVPs) for time-harmonic wave propagation, several methods have been proposed following the idea of  functions that solves the governing PDE, \cite{survey}, relying on incorporating oscillating functions in the function spaces to derive and discretize a weak formulation.
 Wave-based numerical methods have received attention from several research groups around the world, from the theoretical \cite{survey} and computational \cite{monk3} point of view, and the pollution effect of plane wave Discontinuous Galerkin (DG) methods was studied in \cite{GH14}. 
Such methods have also been implemented in industry codes\footnote{http://www.waveller.com/Waveller\_Acoustics/waveller\_acoustics.shtml}, for acoustic applications.
The use of Plane Wave (PW) basis functions has been the most popular choice, while an attempt to use Bessel functions was reported in \cite{Teemu}. In \cite{comp}, the authors present an interesting comparison of performance between high order polynomial and wave-based methods. More recently, application to space-time problems have been studied in \cite{MP,DM1,tsuk,DM2,K+}.
 
In this context, numerical methods rely on discretizing a weak formulation via a set of exact solutions of the governing PDE. When no exact solutions to the governing PDE are available, there is no natural choice of basis functions to discretize the weak formulation. This is in particular the case for variable coefficient problems. In order to take advantage of Trefftz-type methods for problems with variable coefficients, Generalized Plane Waves (GPWs) were introduced in \cite{LMD}, as basis functions that are local approximate solutions - rather than exact solutions - to the governing PDE. GPWs were designed adding higher order terms in the phase of classical PWs, choosing these higher order terms to ensure the desired approximation of the governing PDE.
 In \cite{LMinterp}, the construction and interpolation properties of GPWs were studied for the Helmholtz equation
\begin{equation}\label{Helm}
-\Delta u +\beta(x,y))u = 0,
\end{equation}
with a particular interest for the case of a sign-changing coefficient $\beta$, including propagating solutions ($\beta<0$), evanescent solutions ($\beta>0$), smooth transition between them ($\beta=0$) called cut-offs in the field of plasma waves. The interpolation properties of a set $\mathbb V$ spanned by resulting basis functions, namely $\| (I-P_{\mathbb V}) u \|$ where $P_{\mathbb V}$ is the orthogonal projector on $\mathbb V$ while $u$ is the solution to the original problem, play a crucial role in the error estimation of the corresponding numerical method \cite{cess}. For this same equation the error analysis of a modified Trefftz method discretized with GPWs was presented in \cite{IGM}.
In \cite{MC}, Generalized Plane Waves (GPWs) were used for the numerical simulation of mode conversion modeled by the following equation:
\begin{equation}\label{eq:2ndOFnt}
\left( \dx^2 +(d+\overline d)\dx\dy +|d|^2\dy^2\right) F + (d-\overline d)x\partial_y F-\left(1+\frac{1}{\mu}+x(x+y)\right)F = 0.
\end{equation} 

In the present work, we answer questions related to extending the work on GPW developed in \cite{LMinterp} - the construction of GPWs on the one hand, and their interpolation properties on the other hand - from the Helmholtz operator $-\Delta+\beta$ to a wide range of partial differential operators. A construction process valid for some operators of order two or higher is presented, while a proof of interpolation properties is limited to some operators of order two. We propose a road map to identify crucial steps in our work:
\begin{enumerate}
\item Construction of GPWs $\varphi$ such that $\mathcal L\varphi\approx 0$
\begin{enumerate}
\item Choose an ansatz for $\varphi$ (Section \ref{sec:constr}).
\item Identify the corresponding $N_{dof}$ degrees of freedom and $N_{eqn}$ constraints (Subsection \ref{ssec:TE2s}).
\item Choose the number of degrees of freedom adequately $N_{dof}\geq N_{eqn}$ (Subsection \ref{ssec:TE2s}).
\item Study the structure of the resulting system and identify $N_{dof}-N_{eqn}$ additional constraints (Subsections \ref{ssec:NL2L} and \ref{ssec:hier}).
\item Compute the remaining $N_{eqn}$ degrees of freedom at minimal computational cost (Subsection \ref{ssec:algo}).
\end{enumerate}
\item Interpolation properties
\begin{enumerate}
\item Study the properties of the remaining $N_{eqn}$ degrees of freedom with respect to the $N_{dof}-N_{eqn}$ additional constraints
\item Identify a simple reference case depending only on the $N_{dof}-N_{eqn}$ additional constraints (Section \ref{sec:norm}).
\item Study the interpolation properties of this reference case (Subsection \ref{sec:PWinterp}).
\item Relate the general case to the reference case (Subsections \ref{ssec:every} and \ref{ssec:each}).
\item Prove the interpolation properties of the GPWs from those of the reference case (Subsection \ref{sec:GPWinterp}).
\end{enumerate}
\end{enumerate}
We will consider linear partial differential operators with variable coefficients, defined as follows.
\begin{dfn}
A linear partial differential operator of order $\mfn \geq 2$, in two dimensions, with a given set of complex-valued coefficients  $\alpha = \{\alpha_{k,\ell-k},(k,\ell)\in\mathbb N^2, 0\leq k\leq\ell\leq \mfn\}$ will be denoted hereafter as
$$ \Lal := \sum_{\ell = 0}^{\mfn} \sum_{k=0}^\ell \alpha_{k,\ell-k}\left( x , y \right) \partial_x^k \partial_y^{\ell-k}.$$ 
\end{dfn}
Our goal is to build a basis of functions well suited to approximate locally any solution $u$ to a given homogeneous variable-coefficient partial differential equation
$$
 \Lal u = 0 \text{ on a domain } \Omega\subset\mathbb R^2,
$$
where by locally we mean piecewise on a mesh $\mathcal T_h$ of $\Omega$. Such interpolation properties are a building block for the convergence proof of Galerkin methods.
For a constant coefficient operator, it is natural to use the same basis on each element $K\in\mathcal T_h$. However, with variable coefficients, it cannot be optimal to expect a single basis to have good approximation properties on the whole domain $\Omega\subset\mathbb R^2$. For instance, for the Helmholtz equation with a sign-changing coefficient, it can not be optimal to look for a single basis that would give a good approximation of solutions both in the propagating region and in the evanescent region. Therefore it is natural to think of local bases defined on each $K\in\mathcal T_h$: with GPWs we focus on local properties around a given point $(x_0,y_0)\in\mathbb R^2$ rather than on a given domain $\Omega$. 
A simple idea would then be freezing the coefficients of the operator, that is to say studying, instead of $\Lal$, the constant coefficient operator $\mathcal L_{\mfn,\bar\alpha}$ with constant coefficients $\bar \alpha = \{\alpha_{k,l}(x_0,y_0),0\leq k+l\leq \mfn\}$.
However, as observed in \cite{LMinterp,Waves2015}, this leads to low order approximation properties, while we are interested in high order approximation properties. This is why new functions are needed to handle variable coefficients. This work will focus on two aspects: the construction and the interpolation properties of GPWs. 

We follow the GPW design proposed in \cite{LMinterp,LMD}. Retaining the oscillating feature while aiming for higher order approximation, GPW were designed with Higher Order Terms ($HOT$) in the phase function of a plane wave. These higher order terms are to be defined to ensure that a GPW function $\varphi$ is an approximate solution to the PDE: 
\begin{equation}\label{eq:c/GPW}
\left\{\begin{array}{l}
\phi(x,y)=\exp i\kappa(\cos\theta x + \sin\theta y )\\
\left[-\Delta-\kappa^2\right]\phi=0
\end{array}\right.
\text{ versus }
\left\{\begin{array}{l}
\varphi(x,y)= \exp( i\kappa(\mathbf \cos\theta x + \sin\theta y )+HOT)\\
\Lal \varphi\approx0
\end{array}\right.
\end{equation}
In Section \ref{sec:constr}, the construction of a GPW $ \varphi(x,y)=e^{P(x,y)}$ will be described in detail, then a precise definition of GPW will be provided under the following hypothesis:
\begin{hyp}\label{hyp}
Consider a given point $(x_0,y_0)\in\mathbb R^2$, 
a given approximation parameter $q\in\mathbb N$, $q\geq 1$,  
a given $\mfn\in\mathbb N$, $\mfn\geq 2$, and a partial differential operator $\Lal$ defined by a given set of complex-valued coefficients $\alpha = \{\alpha_{k,l},0\leq k+l\leq \mfn\}$, defined in a neighborhood of $(x_0,y_0)$, satisfying
\begin{itemize}
\item $\alpha_{k,l}$ is $\mathcal C^{q-1}$ 
 at $(x_0,y_0)$ for all $(k,l)$ such that $0\leq k+l\leq \mfn$,
\item  $\alpha_{\mfn,0}(x_0,y_0)\neq 0$.
\end{itemize}
\end{hyp}
\noindent This construction is equivalent to the construction of the bi-variate polynomial 
$$\displaystyle P(x,y) = \sum_{0\leq i+j\leq d P} \luij (x-x_0)^i(y-y_0)^j,$$
and is performed by choosing the degree $dP$, and providing an explicit formula for the set of complex coefficients $\{\luij\}_{\{ (i,j)\in\mathbb N^2, 0\leq i+j\leq d P \}}$, in order for $\varphi$ to satisfy $\Lal \varphi(x,y)=O\left(\| (x,y)-(x_0,y_0) \|^q\right)$.  An algorithm to construct a GPW is provided. In Section \ref{sec:norm} properties of the $\luij$s are studied, while the interpolation properties of the corresponding set of basis functions are studied for the case $\mfn=2$ in Section \ref{sec:int}, under the following hypothesis:
\begin{hyp}\label{hyp2}
Under Hypothesis \ref{hyp} we consider only operators $\Lal$ such that $\mfn$ is even and the terms of order $\mfn$ satisfy
$$
\sum_{k=0}^\mfn \alpha_{k,\mfn-k} (x_0,y_0)X^kY^{\mfn-k}
=
(\gamma_1X^2+\gamma_2XY+\gamma_3Y^2)^{\frac{\mfn}{2}}
$$
%
%
for some complex numbers $(\gamma_1,\gamma_2,\gamma_3)$ such that there exists $(\mu_1,\mu_2)\in\mathbb C^2$, $\mu_1\mu_2\neq 0$, a non-singular matrix $A\in\mathbb C^{2\times2}$ satisfying
$\Gamma = A^tDA$ where
$\Gamma=\begin{pmatrix}
\gamma_1 &\gamma_2/2\\\gamma_2/2&\gamma_3
\end{pmatrix}$ and
$D=\begin{pmatrix}
\mu_1 &0\\0&\mu_2
\end{pmatrix}$
, and therefore
$$
\sum_{k=0}^\mfn \alpha_{k,\mfn-k} (x_0,y_0)X^kY^{\mfn-k}
=
\left( \mu_1(A_{11}X+A_{12}Y)^2+\mu_2(A_{21}X+A_{22}Y)^2\right)^{\frac{\mfn}2}.
$$
\end{hyp}
For instance, these matrices are $\Gamma=D=Id$ for $\mathcal L_H:=-\Delta-\kappa^2(x,y)$ or  $\mathcal L_B:=\Delta\mathcal L_H$, and $\Gamma=D=c(x_0,y_0)Id$ for $\mathcal L_C:=-\nabla\cdot(c(x,y)\nabla )-\kappa^2(x,y)$. Note that if $\Gamma$ is real, this is simply saying that its eigenvalues are non-zero. 
\noindent
Finally, corresponding numerical results are then provided, for various operators $\Lal$ of order $\mfn =2$ in Section \ref{sec:NR}. 


Our previous work was limited to 
the Helmholtz equation \eqref{Helm} for propagating and evanescent regions, transition between the two, absorbing regions, as well as caustics. 
The interpolation properties presented here cover more general second order equations, in particular equations that can be written as
\begin{equation}\label{eq:2O}
\nabla \cdot (A\nabla u) + \mathbf d\cdot\nabla u + k^2 m u = 0,
\end{equation}
with variable coefficients $A$ matrix-valued, real and symmetric with non-zero eigenvalues, $\mathbf d$ vector-valued and $m$ scalar-valued.
It includes for instance
\begin{itemize}
\item Helmholtz equation with absorption corresponding to $A = I$ with $\Re(m)>0$ and $\Im(m)\neq 0$ ;
\item the mild-slop equation \cite{mild-slope} modeling the amplitude of the free-surface water waves corresponding to $m=c_pc_g$ being the product of $c_p$ the phase speed of the waves and $c_g$ the group speed of the waves with  $A = m Id$ ;
\item if $\mu$ is the permeability and $\epsilon$ the permittivity, then the transverse-magnetic mode of Maxwell's equations for $A=\frac1\mu I$ and $m=\epsilon$, while  the transverse-electric mode of Maxwell's equations for $A=\frac1\epsilon I$ and $m=\mu$.
\end{itemize}


Throughout this article, we will denote by $\mathbb N$ the set of non-negative integers, by $\mathbb N^*$ the set of positive integers, by $\mathbb R^{+} = [0;+\infty)$ the set of non-negative real numbers, and by $\mathbb C[z_1,z_2]$ the space of complex polynomials with respect to the two variables $z_1$ and $z_2$. As the first part of this work is dedicated to finding the coefficients $\luij$, we will reserve the word unknown to refer to the $\lambda_{i,j}$s. The length of the multi-index $(i,j)\in\mathbb N^2$ of an unknown $\luij$,  $|(i,j)|=i+j$, will play a crucial role in what follows. 

\section{Construction of a GPW}\label{sec:constr}
The task of constructing a GPW is attached to a homogeneous PDE, it is not global on $\mathbb R^2$ but it is local as it is expressed in terms of a Taylor expansion. It consists in finding a polynomial $P\in\mathbb C[x,y]$ such that the corresponding GPW, namely $ \varphi:=e^{P}$, is locally an approximate solution to the PDE.

Consider $\mfn = 2$, $\beta = \{ \beta_{0,0}, \beta_{0,1}=\beta_{1,0}=\beta_{1,1}=0, \beta_{0,2}\equiv -1,\beta_{2,0}\equiv -1 \}$, and the corresponding the operator $\mathcal L_{2,\beta} = -\partial_x^2  -\partial_y^2 + \beta_{0,0}(x)$. Then for any polynomial $P\in\mathbb C[x,y]$:
$$\mathcal L_{2,\beta}e^{P(x,y)} = \left(-\partial_x^2 P - (\partial_x P)^2 -\partial_y^2 P - (\partial_y P)^2 + \beta_{0,0}(x,y) \right) e^{P(x,y)}
,$$
so the construction of an exact solution to the PDE would be equivalent to the following problem:
\begin{equation}\label{eq:PLH1}
\text{Find } P\in\mathbb C[x,y] \text{ such that }
\partial_x^2 P(x,y) + (\partial_x P)^2(x,y) +
\partial_y^2 P(x,y) + (\partial_y P)^2(x,y) 
= \beta_{0,0}(x,y).
\end{equation}
Consider then the following examples.
\begin{itemize}
\item If $\beta_{0,0}(x,y)$ is constant, then it is straightforward to find a polynomial of degree one satisfying Problem \eqref{eq:PLH1}; $\beta_{0,0}$ being negative this would correspond to a classical plane wave.
\item   If $\beta_{0,0}(x,y)=x$, then there is no solution to \eqref{eq:PLH1}, since the total degree of $\partial_x^2 P + (\partial_x P)^2+\partial_y^2 P + (\partial_y P)^2$ is always even.
\item If $\beta_{0,0}(x,y)$ is not a polynomial function, it is also straightforward to see that no polynomial $P$ can satisfy Problem \eqref{eq:PLH1}.
\end{itemize}

From these trivial examples we see that in general there is no such function, $\varphi(x,y)=e^{P(x,y)}$, $P$ being a complex polynomial, solution to a variable coefficient partial differential equation exactly. It could seem that the restriction for $P$ to be a polynomial is very strong. 
However since we are interested in approximation and smooth coefficients, rather than looking for a more general phase function we restrict the identity $\mathcal L \varphi=0$ on $\Omega$ into an approximation on a neighborhood of $(x_0,y_0)\in\mathbb R^2$ in the following sense. We replace the too restrictive cancellation of $\Lal e^{P(x,y)}$ by the cancellation of the lowest terms of its Taylor expansion around $(x_0,y_0)$.  So this section is dedicated to the construction of a polynomial $P\in\mathbb C[x,y]$, under Hypothesis \ref{hyp}, to ensure that the following local approximation property
\begin{equation}\label{eq:Lhq}
\Lal e^{P(x,y)} = O(\|(x-x_0,y-y_0)\|^q)
\end{equation} 
is satisfied. The parameter $q$ will denote throughout this work the order of approximation of the equation to which the GPW is designed. In summary, the construction is performed:
\begin{itemize}
\item for a partial differential operator $\Lal$ of order $\mfn$ defined by a set of smooth coefficients $\alpha$,
\item at a point $(x_0,y_0)\in\mathbb R^2$,
\item at order $q\in\mathbb N^*$,
\item to ensure that
$
\Lal e^{P(x,y)} = O(|(x-x_0,y-y_0)|^q).
$
\end{itemize}

Even though the construction of a GPW will involve a non-linear system 
we propose to take advantage of the structure of this system to construct a solution via an explicit  formula.
In this way, even though a GPW $ \varphi:=e^{P}$ is a PDE-based function, the polynomial $P$ can be constructed in practice from this formula, and therefore the function can be constructed without solving numerically any non-linear - or even linear - system. This remark is of great interest with respect to the use of such functions in a Discontinuous Galerkin method to solve numerically boundary value problems.

In order to illustrate the general formulas that will appear in this section, we will use the specific case $\mathfrak L_{2,\gamma}$ where $\gamma = \{ \gamma_{0,0},\gamma_{1,0},\gamma_{0,1},\gamma_{2,0} \equiv -1,\gamma_{1,1} , \gamma_{0,2}\}$, for which we can write explicitly many formulas is a compact form. 
In order to simplify certain expressions that will follow we propose the following definition.
\begin{dfn}
Assume $(i,j)\in\mathbb N^2$ and $(x_0,y_0)\in\mathbb R^2$. We define the linear partial differential operator $D^{(i,j)}$  by
$$
\Dop{i}{j}: f\in\mathcal C^{i+j} \mapsto \frac1{i!j!}\dx^i\dy^j f.
$$
\end{dfn}

A precise definition of GPW will be provided at the end of this section.

\subsection{From the Taylor expansion to a non-linear system}
\label{ssec:TE2s}
We are seeking a polynomial 
$\displaystyle P(x,y) = \sum_{0\leq i+j\leq d P} \luij (x-x_0)^i(y-y_0)^j$ satisfying the Taylor expansion \eqref{eq:Lhq}. Defining such a polynomial is equivalent to defining the set $ \{\luij ; (i,j)\in\mathbb N^2,0\leq i+j\leq d P \} $, and therefore we will refer to the $\luij$s as the unknowns throughout this construction process. The goal of this subsection is to identify the set of equations to be satisfied by these unknowns to ensure that $P$ satisfies the Taylor expansion \eqref{eq:Lhq}, and in particular to choose the degree of $P$ so as to guarantee the presence of linear terms in each equation of the system. 

According to the Faa di Bruno formula, the action of the partial differential operator $\Lal$ on a function  $\varphi (x,y)=  e^{P(x,y)}$ is given by 
\begin{align*}
 \Lal e^{P( x , y )} =& e^{P(x,y)} \Bigg(\alpha_{0,0}(x,y)
\\&+ \sum_{\ell = 1}^{\mfn} \sum_{k=0}^\ell \alpha_{k,\ell-k}\left( x , y \right)
k!{(\ell-k)}! \sum_{1\leq\mu\leq \ell} \sum_{s=1}^{\ell} \sum_{p_s((k,\ell-k),\mu)} \prod_{m=1}^s \frac{1}{k_m!}\left(
 \Dop{i_m}{j_m}
 P(x,y)
 \right)^{k_m}\Bigg),
\end{align*} 
where the linear order $\prec$ on $\mathbb N^2$ is defined by
$$
\forall (\mu,\nu)\in(\mathbb N^2)^2, \mu\prec \nu \Leftrightarrow 
\begin{array}{l}
1.\ \mu_1+\mu_2 < \nu_1+\nu_2 ; \text{ or }\\
2.\ \mu_1+\mu_2=\nu_1+\nu_2 \text{ and } \mu_1 <\mu_2,
\end{array}
$$
and where $p_s((i,j),\mu)$ is equal to
$$
\begin{array}{l}
\Bigg\{
(k_1,\cdots,k_s;(i_1,j_1),\cdots,(i_s,j_s)):k_i>0,0\prec(i_1,j_1)\prec\cdots\prec(i_s,j_s),\\
\displaystyle \phantom{\Bigg\{}
\sum_{l=1}^s k_l = \mu,
\sum_{l=1}^s k_li_l = i,
\sum_{l=1}^s k_lj_l = j
\Bigg\}.
\end{array}
$$

For the operator $\mathfrak L_{2,\gamma}$ the Faa di Bruno formula becomes
\begin{align*}
\mathfrak L_{2,\gamma} e^{P} =& 
e^{P} \Bigg(
-\dx^2 P + \gamma_{1,1}\dx\dy P + \gamma_{0,2}\dy^2 P
-(\dx P)^2 + \gamma_{1,1}\dx P\dy P + \gamma_{0,2}(\dy P)^2
\\&\phantom{e^{P} \Bigg(}
+ \gamma_{1,0}\dx P + \gamma_{0,1}\dy P + \gamma_{0,0}
\Bigg).
\end{align*} 
 
In order to single out the terms depending on $P$ in the right hand side, this leads to the following definition.
\begin{dfn}\label{def:LA}
Consider a given $\mfn\in\mathbb N$, $\mfn\geq 2$, a given set of complex-valued functions $\alpha=\{ \alpha_{k,l},0\leq k+l\leq \mfn \}$, and the corresponding partial differential operator $\Lal$. 
We define the partial differential operator $\LA$ associated to $\Lal$ as
\begin{equation*}
\LA = \sum_{\ell = 1}^{\mfn} \sum_{k=0}^\ell k!{(\ell-k)}! \alpha_{k,\ell-k}
\sum_{1\leq\mu\leq \ell} \sum_{s=1}^{\ell} \sum_{p_s((k,\ell-k),\mu)} \prod_{m=1}^s \frac{1}{k_m!}\left(  \Dop{i_m}{j_m}(\cdot) \right)^{k_m},
\end{equation*}
or equivalently, since the exponential of a bounded quantity is bounded away from zero:
$$\LA:f\in\mathcal C^\mfn\mapsto 
\frac{\Lal e^f}{e^f}-\alpha_{0,0}.
$$
\end{dfn}

For the operator $\mathfrak L_{2,\gamma}$  this gives
\begin{equation*}
\mathfrak L_{2,\gamma}^A P= -\dx^2 P + \gamma_{1,1}\dx\dy P + \gamma_{0,2}\dy^2 P
-(\dx P)^2 + \gamma_{1,1}\dx P\dy P + \gamma_{0,2}(\dy P)^2
+ \gamma_{1,0}\dx P + \gamma_{0,1}\dy P.
\end{equation*}
 
Since, for any polynomial $P$, the function $e^P$ is locally bounded, and since ${\Lal[ e^P]}=\big(\LA{e^P}+\alpha_{0,0}\big)e^P$, then for a polynomial $P$ to satisfy the approximation property \eqref{eq:Lhq}, it is sufficient to satisfy
\begin{equation}\label{eq:Nhq}
\LA P (x,y) =- \alpha_{0,0}(x,y) + O(|(x-x_0,y-y_0)|^q).
\end{equation}
Therefore, the problem to be solved 
is now:
\begin{equation}\label{thesystdeg}
\text{Find } P\in\mathbb C[x,y], \text{ s.t. }
\forall (I,J)\in\mathbb N^2, 0\leq I+J <q,
\Dop{I}{J} \LA P (x_0,y_0)
=
-\Dop{I}{J} \alpha_{0,0}(x_0,y_0).
\end{equation}

In order to define a polynomial $\displaystyle P(x,y) = \sum_{0\leq i+j\leq d P} \luij (x-x_0)^i(y-y_0)^j$, the degree $dP$ of the polynomial determines the number of unknowns: there are $N_{dof} = \frac{(dP+1)(dP+2)}{2}$ unknowns to be defined, namely the $\{\lambda_{i,j}\}_{\{(i,j)\in\mathbb N,0\leq i+j\leq dP}$.
In order to design a polynomial $P$ satisfying Equation \eqref{eq:Nhq}, the parameter $q$ determines the number of equations to be solved: there are $N_{eqn} = \frac{q(q+1)}{2}$ terms to be canceled from the Taylor expansion. 
The first step toward the construction of a GPW is to define the value of $dP$ for a given value of $q$. 

At this point it is clear that if $dP\leq q-1$, then the resulting system is over-determined.
Our choice for the polynomial degree $dP$ relies on a careful examination of the linear terms in $\LA P$.
We can already notice that, under Hypothesis \ref{hyp}, in $\LA P$ there is at least one non-zero linear term, namely $\alpha_{\mfn,0}(x_0,y_0)\dx^\mfn P$, and there is at least one non-zero non-linear term, namely $\alpha_{\mfn,0}(x_0,y_0)(\dx P)^\mfn$. This non-linear term corresponds to the following parameters from the Faa di Bruno formula: $\mu=\mfn$, $s=1$, $(k_1,(i_1,j_1)) = (\mfn,(1,0))$.
 The linear terms can only correspond to $s=1$, $\mu=1$ and $p_1((k,\ell-k),1) = \{ (1,(k,\ell-k)) \}$, see Definition \ref{def:LA}. We can then split $\LA$ into its linear and non-linear parts. 
\begin{dfn}
Consider a given $\mfn\in\mathbb N$, $\mfn\geq 2$, a given set of complex-valued functions $\alpha=\{ \alpha_{k,l},0\leq k+l\leq \mfn \}$, and the corresponding partial differential operator $\Lal$. 
The linear part of the partial differential operator $\LA$ is defined by $\LL:=\Lal-\alpha_{0,0} \dx^{0}\dy^{0}$, or equivalently
\begin{equation*}
\LL = \sum_{\ell = 1}^{\mfn} \sum_{k=0}^\ell \alpha_{k,\ell-k} \dx^{k}\dy^{\ell-k},
\end{equation*}
and its non-linear part $\LN:=\LA -\LL$ can equivalently be defined by
\begin{align*}
\LN = \sum_{\ell = 1}^{\mfn} \sum_{k=0}^\ell k!{(\ell-k)}! \alpha_{k,\ell-k}
\sum_{2\leq\mu\leq \ell} \sum_{s=1}^{\ell} \sum_{p_s((k,\ell-k),\mu)} \prod_{m=1}^s \frac{1}{k_m!}\left( \Dop{i_m}{j_m}  (\cdot) \right)^{k_m}.
\end{align*}
\end{dfn}

For the operator $\mathfrak L_{2,\gamma}$  this gives respectively
\begin{equation*}
\left\{\begin{array}{l}\displaystyle
\mathfrak L_{2,\gamma}^L P= -\dx^2 P + \gamma_{1,1}\dx\dy P + \gamma_{0,2}\dy^2 P
+ \gamma_{1,0}\dx P + \gamma_{0,1}\dy P,\\\displaystyle
\mathfrak L_{2,\gamma}^N P= 
-(\dx P)^2 + \gamma_{1,1}\dx P\dy P + \gamma_{0,2}(\dy P)^2.
\end{array}\right.
\end{equation*}

Consider the $(I,J)$ coefficients of the Taylor expansion of $\LL P$ for $(I,J)\in\mathbb N^2$ and $0\leq I+J <q$:
\begin{align*}
\Dop{I}{J} \left[\LL P\right] (x_0,y_0)
&= \sum_{\ell = 1}^{\mfn} \sum_{k=0}^\ell 
\Dop{I}{J} \left[\alpha_{k,\ell-k} \dx^{k}\dy^{\ell-k}P\right] (x_0,y_0),
\end{align*}
so that in order to isolate the derivatives of highest order, i.e. of order $\mfn+I+J$, we can write
\begin{equation}\label{eq:dxidyj}
\begin{array}{l}
\displaystyle
\Dop{I}{J} \left[\LL P\right] (x_0,y_0)
\\\displaystyle
\phantom == \frac{1}{I!J!} \sum_{k=0}^{\mfn} 
\alpha_{k,{\mfn}-k} (x_0,y_0) \dx^{k+I}\dy^{{\mfn}-k+J}P (x_0,y_0)
\\
\phantom{==}
\displaystyle+ \sum_{k=0}^{\mfn} 
\sum_{\tilde i=0}^{I-1}\sum_{\tilde j=0}^{J-1}
\frac{1}{\tilde i!\tilde j!}
\Dop{I-\tilde i}{J-\tilde j}\alpha_{k,{\mfn}-k} (x_0,y_0) \dx^{k+\tilde i}\dy^{{\mfn}-k+\tilde j}P (x_0,y_0)\\
\phantom{==}
\displaystyle+\sum_{\ell = 1}^{\mfn-1} \sum_{k=0}^\ell 
\sum_{\tilde i=0}^{I}\sum_{\tilde j=0}^{J}
\frac{1}{\tilde i!\tilde j!}
\Dop{I-\tilde i}{J-\tilde j}\alpha_{k,\ell-k} (x_0,y_0) \dx^{k+\tilde i}\dy^{\ell-k+\tilde j}P (x_0,y_0).
\end{array}
\end{equation}
Back to Problem \eqref{thesystdeg}, the $(I,J)$ terms \eqref{eq:dxidyj} a priori depend on the unknowns $\{\lambda_{i,j},(i,j)\in\mathbb N^2,0\leq i+j\leq dP\}$. 
Since
$$
\forall (i,j)\in\mathbb N^2, 
\Dop{i}{j}P (x_0,y_0) = 
\left\{\begin{array}{ll}
\lambda_{i,j} &\text{ if } i+j\leq dP, \\
0  & \text{otherwise},
\end{array}\right.
$$
then under Hypothesis \ref{hyp} any $(I,J)$ term in System \eqref{thesystdeg} has at least one non-zero linear term, as long as $I+J\leq dP-\mfn$, namely 
$\frac{(\mfn +I)!}{I!}\alpha_{\mfn,0}(x_0,y_0)\lambda_{\mfn+I,J}$, while it does not necessarily have any linear term as soon as $I+J>dP-\mfn$.
Avoiding equations with no linear terms is natural, and it will be crucial for the construction process described hereafter.
 
Choosing the polynomial degree to be $dP=\mfn +q-1$ therefore guarantees the existence of at least one linear term in every equation of System \eqref{thesystdeg}. Therefore, from now on the polynomial $P$ will be of degree $dP=\mfn +q-1$ and the new problem to be solved is
\begin{equation}\label{thesyst}
\begin{array}{c}
\displaystyle
\text{Find } 
\{\lambda_{i,j},(i,j)\in\mathbb N^2,
                0\leq i+j\leq \mfn +q-1\} 
 \text{ such that }\hfill\\
\displaystyle
P(x,y):=\sum_{i=0}^{\mfn +q-1}\sum_{j=0}^{\mfn +q-1-i}
\lambda_{i,j} (x-x_0)^i(y-y_0)^j
\in\mathbb C[x,y], \text{ satisfies }\\
\displaystyle
\forall (I,J)\in\mathbb N^2, 0\leq I+J <q,
\Dop{I}{J} \LA P (x_0,y_0)
=
-\Dop{I}{J} \alpha_{0,0} (x_0,y_0).
\end{array}
\end{equation}
As a consequence the number of unknowns is $N_{dof} = \frac{(\mfn+q)(\mfn+q+1)}{2}$, and the system is under-determined : $N_{dof}-N_{eqn} = \mfn q+ \frac{\mfn(\mfn+1)}{2}$. See Figure \ref{fig:indCOMB} for an illustration of the equation and unknown count.
\begin{figure}
\begin{center}
\includegraphics[width=.45\textwidth,trim={5cm 12cm 2cm 4cm},clip]{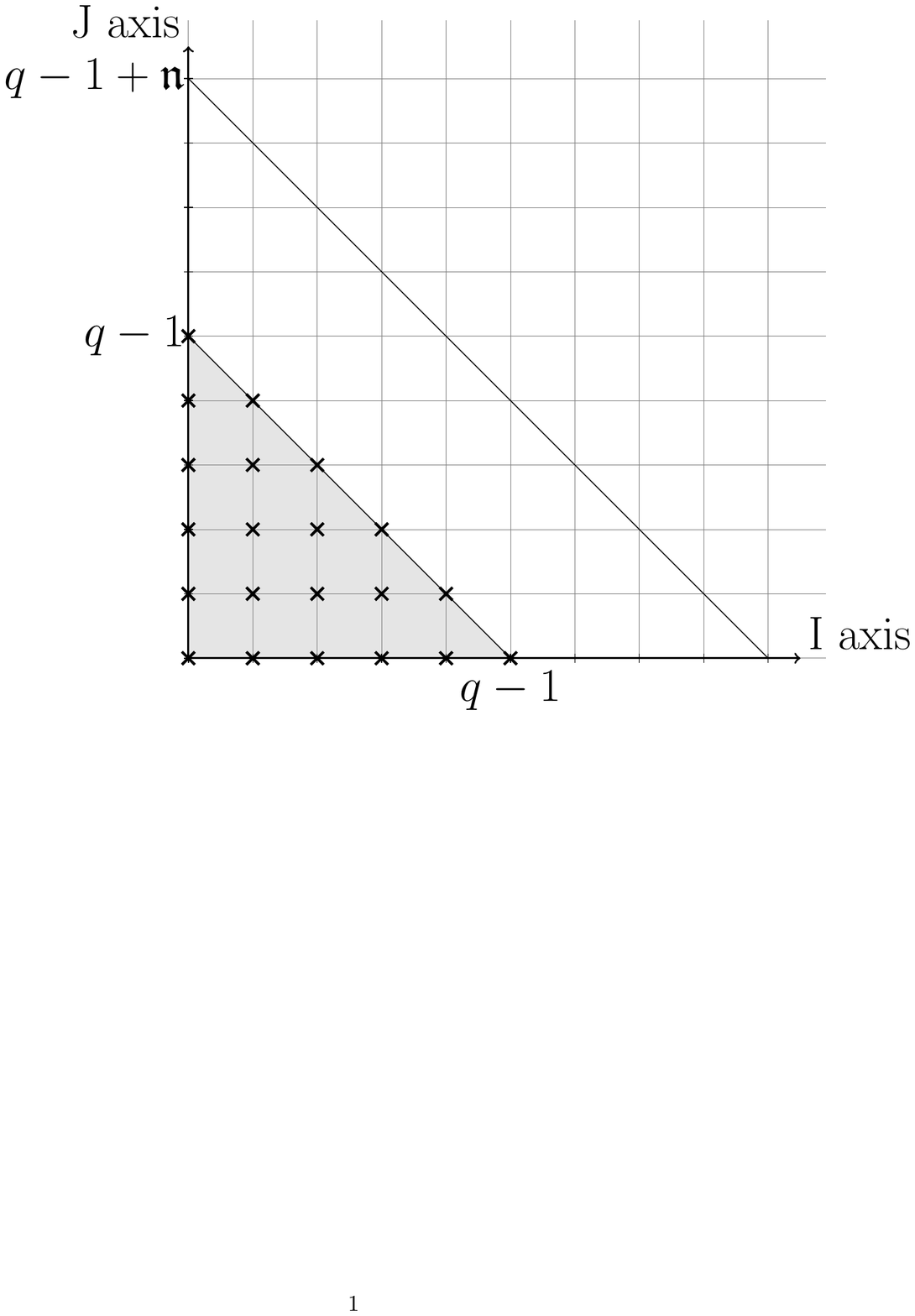}
\includegraphics[width=.45\textwidth,trim={5cm 12cm 2cm 4cm},clip]{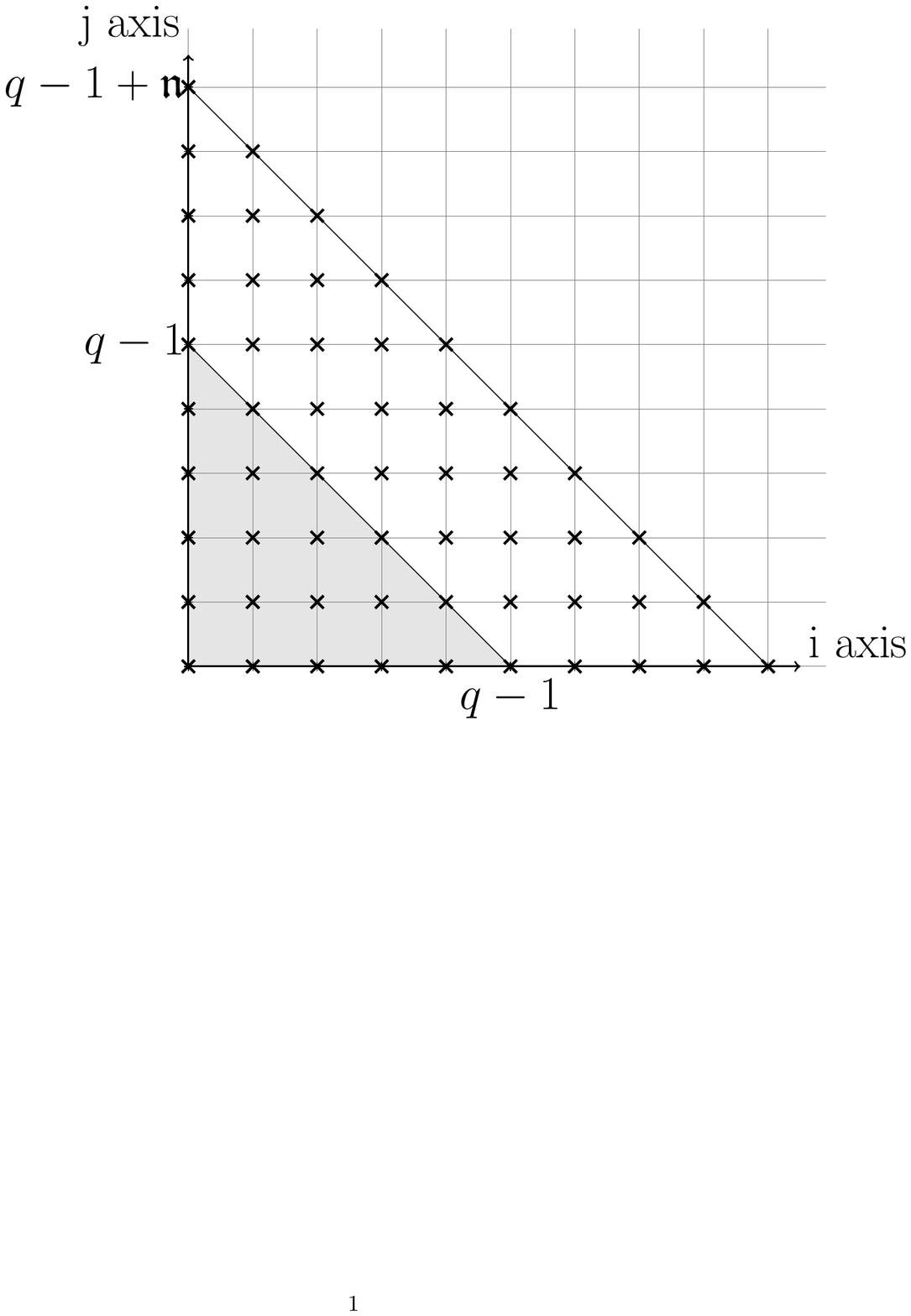}
\end{center}
\caption{Representation of the indices involved in the nonlinear system \eqref{thesyst}, for $q=6$ and $\mathfrak n=4$. Each cross in the $(I,J)$ plane corresponds to the equation $(I,J)$ in System \eqref{thesyst} (Left panel), while each cross in the $(i,j)$ plane corresponds to the unknown $\luij$ (Right panel).
}
\label{fig:indCOMB}
\end{figure}
 
Note that this system is always non-linear. Indeed, under Hypothesis \ref{hyp}, the $(0,0)$ equation of the system always includes the non-zero non-linear term $\alpha_{\mfn,0}(x_0,y_0)(\lambda_{1,0})^{\mfn}$, corresponding to the following parameters from the Faa di Bruno formula: $\mu=\mfn$, $s=1$, $(k_1,(i_1,j_1)) = (\mfn,(1,0))$.

The key to the construction procedure proposed next is a meticulous gathering of unknowns $\lambda_{i,j}$ with respect the length of their multi-index $i+j$. As we will now see, this will lead to splitting the system into a hierarchy of simple linear sub-systems.

\subsection{From a non-linear system to linear sub-systems}\label{ssec:NL2L}
The different unknowns appearing in each equation of System \eqref{thesyst} can now be studied. A careful inspection of the linear and non-linear terms will reveal the underlying structure of the system, and will lead to identify a hierarchy of simple linear subsystems.

The inspection of the linear terms is very straightforward thanks to Equation \eqref{eq:dxidyj}. 
The description of the unknowns in the linear terms is summarized here.
\begin{lmm}\label{lm:lterms}
\defOp
In each equation $(I,J)$ of System \eqref{thesyst}, the linear terms can be split as follows:
\begin{itemize}
\item a set of unknowns with  length of the multi-index equal to $\mfn+I+J$, corresponding to $\ell=\mfn$ and $(\tilde{i},\tilde{j})=(I,J)$,
\item a set of unknowns with  length of the multi-index at most equal to  $\mfn+I+J-1$.
\end{itemize}
Under Hypothesis \ref{hyp}, both sets are never empty.
\end{lmm}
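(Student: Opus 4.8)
\emph{Proof proposal.} The plan is to read off the linear terms of each equation $(I,J)$, $0\le I+J<q$, directly from \eqref{eq:dxidyj} and to classify them by the multi-index length of the unknown they contain. The first step is to reduce to the linear part $\LL$: by Definition \ref{def:LA}, $\LN$ collects exactly the Faa di Bruno contributions with $\mu\ge 2$, so every monomial it produces is a product of at least two factors $\Dop{i_m}{j_m}P(x_0,y_0)=\lambda_{i_m,j_m}$, while $\alpha_{0,0}$ only contributes the constant that sits on the right-hand side of \eqref{thesyst}. Hence the linear terms of equation $(I,J)$ are exactly the degree-one monomials of $\Dop{I}{J}\LL P(x_0,y_0)$, i.e.\ the terms written out in \eqref{eq:dxidyj}.

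The core of the argument is then a length count. Using that $\dx^{a}\dy^{b}P(x_0,y_0)=a!\,b!\,\lambda_{a,b}$ whenever $a+b\le dP=\mfn+q-1$ and $0$ otherwise, every term of $\Dop{I}{J}\LL P(x_0,y_0)$ indexed by parameters $(\ell,k,\tilde i,\tilde j)$, with $1\le\ell\le\mfn$, $0\le k\le\ell$, $0\le\tilde i\le I$, $0\le\tilde j\le J$, is a scalar multiple of $\lambda_{k+\tilde i,\ \ell-k+\tilde j}$, whose multi-index has length $\ell+\tilde i+\tilde j$. This length is at most $\mfn+I+J$, with equality if and only if $\ell=\mfn$ and $(\tilde i,\tilde j)=(I,J)$ --- which is precisely the first sum in \eqref{eq:dxidyj}. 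I would then record the dichotomy: one group of unknowns of multi-index length exactly $\mfn+I+J$, namely the $\lambda_{k+I,\ \mfn-k+J}$, $0\le k\le\mfn$, for which $\alpha_{k,\mfn-k}(x_0,y_0)\ne 0$; and a group of unknowns of multi-index length at most $\mfn+I+J-1$, coming either from $\ell\le\mfn-1$ or from $\ell=\mfn$ with $(\tilde i,\tilde j)\ne (I,J)$, i.e.\ from the second and third sums. All of these are admissible unknowns of System \eqref{thesyst}, since $I+J\le q-1$ gives $\mfn+I+J\le dP$.

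It remains to check that both groups are non-empty. For the first group I would take $k=\mfn$: the corresponding monomial is $\frac1{I!J!}\alpha_{\mfn,0}(x_0,y_0)\,\dx^{\mfn+I}\dy^{J}P(x_0,y_0)=\frac{(\mfn+I)!}{I!}\,\alpha_{\mfn,0}(x_0,y_0)\,\lambda_{\mfn+I,J}$, whose coefficient is non-zero by Hypothesis \ref{hyp} and whose unknown $\lambda_{\mfn+I,J}$ is admissible because $\mfn+I+J\le dP$; so this group always contains $\lambda_{\mfn+I,J}$. For the second group I would extract from \eqref{eq:dxidyj}, again evaluated at $(x_0,y_0)$, one unknown of strictly smaller multi-index length carrying a non-zero coefficient. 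I expect this last point to be the real obstacle: the length count above is mechanical, whereas showing that \emph{some} lower-length unknown genuinely survives requires pinning down which structural ingredient of $\Lal$ under Hypothesis \ref{hyp} --- a non-vanishing lower-order coefficient, or the non-constancy at $(x_0,y_0)$ of a top-order coefficient --- is actually supplying that monomial, and checking that the accompanying coefficient in \eqref{eq:dxidyj} does not vanish.
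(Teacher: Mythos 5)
Your main argument is exactly the paper's: the paper's proof writes out $\Dop{I}{J}\left[\LL P\right](x_0,y_0)$ in terms of the $\lambda_{i,j}$ (its equations \eqref{eq:lin1}--\eqref{eq:lin4}, which are just \eqref{eq:dxidyj} evaluated at $(x_0,y_0)$), observes that the first sum involves only unknowns of multi-index length $\mfn+I+J$ while the remaining sums involve only lengths at most $\mfn+I+J-1$, and concludes. Your reduction to $\LL$ (every monomial produced by $\LN$ has at least two $\lambda$-factors), your length count with equality exactly when $\ell=\mfn$ and $(\tilde i,\tilde j)=(I,J)$, and your exhibition of the term $\frac{(\mfn+I)!}{I!}\alpha_{\mfn,0}(x_0,y_0)\lambda_{\mfn+I,J}$ with non-zero coefficient for the first set are the same ingredients, stated if anything slightly more carefully.

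On the point you flag as the real obstacle: your hesitation is well founded, because no such argument exists under Hypothesis \ref{hyp} alone. For the constant-coefficient operator $-\dx^2-\dy^2-\kappa^2$ (which satisfies Hypothesis \ref{hyp}), every linear term of length strictly less than $\mfn+I+J$ in equation $(I,J)$ of \eqref{thesyst} carries a zero coefficient, since all lower-order coefficients vanish and all derivatives of the top-order coefficients vanish. The paper does not attempt to produce a surviving lower-length monomial either: its proof reads the last sentence of the lemma formally, i.e.\ the second ``set'' is the collection of terms appearing in the second and third sums of \eqref{eq:dxidyj}, whose index ranges are non-empty because $\mfn\geq 2$ (for $(I,J)=(0,0)$ this is the $\ell\leq\mfn-1$ double sum in \eqref{eq:lin1}), with no claim that their coefficients are non-zero. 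Nothing downstream needs more than this: Corollary \ref{cor:RIJ} and Proposition \ref{prop:CS} use only that unknowns of length $\mfn+I+J$ occur solely in the first group and that the $k=\mfn$ entry of that group has the non-vanishing coefficient $\alpha_{\mfn,0}(x_0,y_0)$. So your proof is complete once you adopt this weaker, structural reading of ``both sets are never empty'' rather than searching for a non-vanishing lower-length coefficient.
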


 
\begin{proof}
In terms of unknowns $\{\lambda_{i,j},(i,j)\in\mathbb N^2,
                0\leq i+j\leq \mfn +q-1\} $, Equation \eqref{eq:dxidyj} reads : 
                
\begin{align}\label{eq:lin1}
\begin{split}
\dx^0\dy^0&\left[\LL P\right] (x_0,y_0)\\ = 
&\sum_{k=0}^{\mfn} (k)!({\mfn}-k)!
\alpha_{k,{\mfn}-k} (x_0,y_0) \lambda_{k,{\mfn}-k}\\
&+\sum_{\ell = 1}^{\mfn-1} \sum_{k=0}^\ell 
 \left(k\right)!\left({\ell}-k\right)!
\alpha_{k,\ell-k} (x_0,y_0)  \lambda_{k,{\ell}-k};
\end{split}
\end{align}
                
\begin{align}\label{eq:lin2}
\begin{split}
\forall J>0,\
&\Dop{0}{J}\left[\LL P\right] (x_0,y_0)\\
&= \frac{1}{J!} \sum_{k=0}^{\mfn} k!({\mfn}-k+J)!
\alpha_{k,{\mfn}-k} (x_0,y_0) \lambda_{k,{\mfn}-k+J}
\\
&\phantom =+ \sum_{k=0}^{\mfn} 
\sum_{\tilde j=0}^{J-1}
 k!\frac{\left({\mfn}-k+\tilde j\right)!}{\tilde j!}
\Dop{0}{J-\tilde j}\alpha_{k,{\mfn}-k} (x_0,y_0) \lambda_{k,{\mfn}-k+\tilde j}\\
&\phantom =+\sum_{\ell = 1}^{\mfn-1} \sum_{k=0}^\ell 
\sum_{\tilde j=0}^{J}
 k!\frac{\left({\ell}-k+\tilde j\right)!}{\tilde j!}
\Dop{0}{J-\tilde j}\alpha_{k,\ell-k} (x_0,y_0)  \lambda_{k,{\ell}-k+\tilde j} ; 
\end{split}
\end{align}
                
\begin{align}\label{eq:lin3}
\begin{split}
\forall I>0,\
&\Dop{I}{0}\left[\LL P\right] (x_0,y_0)\\
&= \frac{1}{I!} \sum_{k=0}^{\mfn} (k+I)!({\mfn}-k)!
\alpha_{k,{\mfn}-k} (x_0,y_0) \lambda_{k+I,{\mfn}-k}
\\
&\phantom =+ \sum_{k=0}^{\mfn} 
\sum_{\tilde i=0}^{I-1}
\frac{ \left(k+\tilde i\right)!}{\tilde i !}\left({\mfn}-k\right)!
\Dop{I-\tilde i}{0}\alpha_{k,{\mfn}-k} (x_0,y_0) \lambda_{k+\tilde i,{\mfn}-k}\\
&\phantom =+\sum_{\ell = 1}^{\mfn-1} \sum_{k=0}^\ell 
\sum_{\tilde i=0}^{I}
\frac{ \left(k+\tilde i\right)!}{\tilde i!}\left({\ell}-k\right)!
\Dop{I-\tilde i}{0}\alpha_{k,\ell-k} (x_0,y_0)  \lambda_{k+\tilde i,{\ell}-k} ;
\end{split}
\end{align} 
                
\begin{align}\label{eq:lin4}
\begin{split}
\forall (I,J), IJ\neq 0,\ &\Dop{I}{J}\left[\LL P\right] (x_0,y_0)\\
&= \frac{1}{I!J!} \sum_{k=0}^{\mfn} (k+I)!({\mfn}-k+J)!
\alpha_{k,{\mfn}-k} (x_0,y_0) \lambda_{k+I,{\mfn}-k+J}
\\
&\phantom =+ \sum_{k=0}^{\mfn} 
\sum_{\tilde i=0}^{I-1}\sum_{\tilde j=0}^{J-1}
\frac{ \left(k+\tilde i\right)!}{\tilde i!}\frac{\left({\mfn}-k+\tilde j\right)!}{\tilde j!}
\Dop{I-\tilde i}{J-\tilde j}\alpha_{k,{\mfn}-k} (x_0,y_0) \lambda_{k+\tilde i,{\mfn}-k+\tilde j}\\
&\phantom =+\sum_{\ell = 1}^{\mfn-1} \sum_{k=0}^\ell 
\sum_{\tilde i=0}^{I}\sum_{\tilde j=0}^{J}
\frac{ \left(k+\tilde i\right)!}{\tilde i!}\frac{\left({\ell}-k+\tilde j\right)!}{\tilde j!}
\Dop{I-\tilde i}{J-\tilde j}\alpha_{k,\ell-k} (x_0,y_0)  \lambda_{k+\tilde i,{\ell}-k+\tilde j}.
\end{split}
\end{align}
The result is immediate for $I=J=0$ from \eqref{eq:lin1}. The following comments are valid for the right hand sides of \eqref{eq:lin2}, \eqref{eq:lin3}, and  \eqref{eq:lin4}:
the third term only contains unknowns with a length of the multi-index equal to $\ell+\tilde i+\tilde j\leq \mfn -1 +I+J$, while 
the second term only contains unknowns with a length of the multi-index equal to $\mfn+\tilde i+\tilde j\leq \mfn +I+J-2$ ; as to
the first term, it only contains unknowns with a length of the multi-index equal to $\mfn+I+J$. This proves the claim.
\end{proof}

We then focus on the inspection of the non-linear terms. Each non-linear term in $\LA P$ reads from the definition of $\LN $
\begin{equation}\label{eq:NLtOp}
\alpha_{k,\ell-k} \prod_{m=1}^s \left( \dx^{i_m}\dy^{j_m}  P \right)^{k_m}\text{ with } \sum_{m=1}^s k_m>1
\end{equation}
 and yields a sum of non-linear terms with respect to the unknowns $\{\luij\}_{\{(i,j),0\leq i+j\leq \mfn+q-1 \}}$, implicitly given by the following formula:
\begin{align}\label{eq:NLt0}
\begin{split}
&\Dop{I}{J}\left[\alpha_{k,\ell-k} \prod_{m=1}^s \left( \dx^{i_m}\dy^{j_m} P \right)^{k_m}\right](x_0,y_0)
\\&= 
\sum_{\tilde i=0}^{I}\sum_{\tilde j=0}^{J}
\Dop{I-\tilde i}{J-\tilde j}\alpha_{k,\ell-k}(x_0,y_0)
\Dop{\tilde i}{\tilde j}\left[\prod_{m=1}^s \left( \dx^{i_m}\dy^{j_m} P \right)^{k_m}\right](x_0,y_0).
\end{split}
\end{align}
Therefore coming from the term \eqref{eq:NLtOp}, only a restricted number of unknowns contribute to the $(I,J)$ equation of Problem \eqref{thesyst}.

In order to identify the unknowns contributing to \eqref{eq:NLt0}, here are two simple yet important reminders are provided in Appendix \ref{PolFor}.

It is now straightforward to describe the unknowns $\lambda_{i,j}$ appearing in the non-linear terms of the equation $(I,J)$ of System \eqref{thesyst}, unwinding formula \eqref{eq:NLt0}. 
\begin{lmm}\label{lm:nlterms}
\defOp
In each equation $(I,J)$ of System \eqref{thesyst}, the unknowns $\lambda_{i,j}$ appearing in the non-linear terms have a length of the multi-index $i+j<\mfn+I+J$.
\end{lmm}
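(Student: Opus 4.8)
The plan is to unwind formula \eqref{eq:NLt0} and bound the length of the multi-index of every unknown that can possibly appear. Fix an equation $(I,J)$ of System \eqref{thesyst} and a generic non-linear term of $\LN P$ of the shape \eqref{eq:NLtOp}, i.e. $\alpha_{k,\ell-k}\prod_{m=1}^s\left(\dx^{i_m}\dy^{j_m}P\right)^{k_m}$ with $1\leq\ell\leq\mfn$, $0\leq k\leq\ell$, the parameters $(k_1,\dots,k_s;(i_1,j_1),\dots,(i_s,j_s))$ ranging over $p_s((k,\ell-k),\mu)$ for some $\mu\geq 2$, so that in particular $\sum_{m=1}^s k_m=\mu\geq 2$, $\sum_{m=1}^s k_mi_m=k$ and $\sum_{m=1}^s k_mj_m=\ell-k$.

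First I would apply \eqref{eq:NLt0} to this term: it expresses $\Dop{I}{J}$ of it as a sum over $0\leq\tilde i\leq I$, $0\leq\tilde j\leq J$ of $\Dop{I-\tilde i}{J-\tilde j}\alpha_{k,\ell-k}(x_0,y_0)$ times $\Dop{\tilde i}{\tilde j}\bigl[\prod_{m=1}^s(\dx^{i_m}\dy^{j_m}P)^{k_m}\bigr](x_0,y_0)$. The coefficient factor contributes no unknowns, so it suffices to see which $\lambda_{i,j}$ occur in $\Dop{\tilde i}{\tilde j}\bigl[\prod_m(\dx^{i_m}\dy^{j_m}P)^{k_m}\bigr](x_0,y_0)$ for $\tilde i\leq I$, $\tilde j\leq J$. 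By the Leibniz rule (the "two simple yet important reminders" of Appendix \ref{PolFor}), differentiating the product $\tilde i$ times in $x$ and $\tilde j$ times in $y$ and evaluating at $(x_0,y_0)$ produces a sum of terms, each of which is a product of $\mu=\sum_m k_m$ factors, the $r$-th factor being a derivative $\dx^{i_m+a_r}\dy^{j_m+b_r}P(x_0,y_0)$ — hence proportional to an unknown $\lambda_{i_m+a_r,\,j_m+b_r}$ — where the shifts satisfy $\sum_r a_r\leq\tilde i\leq I$ and $\sum_r b_r\leq\tilde j\leq J$ (the total number of $x$- and $y$-derivatives distributed among the factors cannot exceed the number applied). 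The key point: each individual unknown $\lambda_{i_m+a_r,\,j_m+b_r}$ appearing as one of these $\mu$ factors has multi-index length $(i_m+a_r)+(j_m+b_r)$.

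Now I would bound that length. For the chosen factor $r$ sitting over the $m$-th group, $(i_m+a_r)+(j_m+b_r)=(i_m+j_m)+(a_r+b_r)$. Since $k_m\geq 1$ and $\mu=\sum k_m\geq 2$, there is at least one other factor present; writing the product degree identity $\sum_m k_m(i_m+j_m)=k+(\ell-k)=\ell\leq\mfn$ and using $k_{m'}(i_{m'}+j_{m'})\geq i_{m'}+j_{m'}\geq 1$ for at least one index $m'$ (in the $p_s$ enumeration one has $(i_1,j_1)\succ 0$, so every $(i_{m},j_{m})\neq(0,0)$), we get $i_m+j_m\leq\ell-1\leq\mfn-1$ when $\mu\geq 2$; more crudely one always has $i_m+j_m\leq\ell-(\mu-1)\leq\mfn-1$. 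Combined with $a_r+b_r\leq\sum_r(a_r+b_r)\leq I+J$, this yields $(i_m+a_r)+(j_m+b_r)\leq(\mfn-1)+(I+J)<\mfn+I+J$, which is exactly the claim. I would then remark that this bound is uniform over all non-linear terms \eqref{eq:NLtOp} of $\LN P$ and over all $(\tilde i,\tilde j)$, so every unknown occurring non-linearly in equation $(I,J)$ has multi-index length $<\mfn+I+J$.

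The main obstacle — and the only place needing care — is the combinatorial bookkeeping in the middle step: making precise, via the Faà di Bruno/Leibniz formulas recalled in the appendix, that the derivative shifts $a_r,b_r$ distributed over the $\mu$ factors sum to at most $\tilde i$ and $\tilde j$ respectively, and that at least one factor beyond the one under consideration forces $i_m+j_m\leq\mfn-1$ whenever $\mu\geq2$. Everything else is a direct substitution of these index inequalities; no analytic estimate is involved, only counting derivatives, so once the bookkeeping is set up cleanly the conclusion is immediate.
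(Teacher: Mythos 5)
Your proposal is correct and follows essentially the same route as the paper's proof: unwind \eqref{eq:NLt0}, use the Leibniz-type bookkeeping of Appendix \ref{PolFor} to see that a derivative of order $(\tilde i,\tilde j)$ of the product only brings in coefficients $\lambda_{i_m+a,j_m+b}$ with $a+b\leq \tilde i+\tilde j\leq I+J$, and bound $i_m+j_m$ by $\mfn-1$. The only difference is that you make explicit the step the paper states tersely (as ``$i_m+j_m\leq\ell<\mfn$''), namely that non-linearity, i.e. $\mu\geq 2$ with every $(i_m,j_m)\neq(0,0)$, forces $i_m+j_m\leq\ell-1\leq\mfn-1$; this is a welcome clarification but not a different argument.
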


 
\begin{proof}
Each term $\dx^{\tilde i}\dy^{\tilde j}\left[\prod_{m=1}^s \left( \dx^{i_m}\dy^{j_m} P \right)^{k_m}\right]$ in $\LA P$ is a polynomial, and its constant coefficient contains  coefficients of the polynomial $\prod_{m=1}^s \left( \dx^{i_m}\dy^{j_m} P \right)^{k_m}$ with a length of the multi-index length of the multi-index at most equal to $\tilde i+\tilde j$,
that is to say coefficients of the polynomials $\dx^{i_m}\dy^{j_m} P$ with a length of the multi-index length of the multi-index  at most equal to $\tilde i+\tilde j$ for every $(i_m,j_m)$ from the Faa di Bruno's formula,
so coefficients $\lambda_{i,j}$ of the polynomial $P$ with a length of the multi-index  at most equal to $\tilde i+\tilde j+i_m+j_m$.
Since the indices are such that $\tilde i\leq I$, $\tilde j\leq J$, and $i_m+j_m\leq \ell<\mfn$, the unknowns $\lambda_{i,j}$ appearing in each term $\dx^{\tilde i}\dy^{\tilde j}\left[\prod_{m=1}^s \left( \dx^{i_m}\dy^{j_m} P \right)^{k_m}\right](x_0,y_0)$ have a length of the multi-index at most equal to $\mfn+I+J-1$. It is therefore true for any linear combination such as \eqref{eq:NLt0}.
\end{proof}

From the two previous Lemmas, we see that, in each equation $(I,J)$ of System \eqref{thesyst}, unknowns with a length of the multi-index equal to $\mfn+I+J$ appear only in linear terms, namely in
$$
\sum_{k=0}^n 
\frac{(k+I)!}{I!}\frac{(\mfn-k+J)!}{J!}\alpha_{k,\mfn-k}(x_0,y_0)\lambda_{k+I,\mfn-k+J},
$$
 whereas all the remaining unknowns have a length of the multi-index at most equal to $\mfn+I+J-1$.  It is consequently natural to subdivide the set of unknowns with respect to the length of their multi-index $\mfn+\mfl$, for $\mfl$ between $0$ and $q-1$ in order to take advantage of this linear structure. 

\subsection{Hierarchy of triangular linear systems}\label{ssec:hier}
Our goal is now to construct a solution to the non-linear system \eqref{thesyst}, and our understanding of its linear part will lead to an explicit construction of such a solution without any need for any approximation.

 The crucial point of our construction process is to take advantage of the underlying layer structure with respect to the length of the multi-index: it is only natural now to gather into subsystems all equations $(I,\mfl-I)$ for $I$ between $0$ and $\mfl$, while gathering similarly all unknowns with length of the multi-index equal to $\mfn+\mfl$. 
 In the subsystem of layer $\mfl$, we know that the unknowns with a length of the multi-index equal to $\mfn+I+J$ only appear in linear terms, and we rewrite each equation $(I,J)$ as
 $$
 \begin{array}{l}
 \displaystyle
\sum_{k=0}^n 
\frac{(k+I)!}{I!}\frac{(\mfn-k+\mfl-I)!}{(\mfl-I)!}\alpha_{k,\mfn-k}(x_0,y_0)\lambda_{k+I,\mfn-k+\mfl-I}\\
 \displaystyle
=-\Dop{I}{J} \alpha_{0,0} (x_0,y_0)
- \Dop{I}{J} \LA P (x_0,y_0)
+\sum_{k=0}^n 
\frac{(k+I)!}{I!}\frac{(\mfn-k+\mfl-I)!}{(\mfl-I)!}\alpha_{k,\mfn-k}(x_0,y_0)\lambda_{k+I,\mfn-k+\mfl-I}.
\end{array}
 $$

 For the sake of clarity, the resulting right-hand side terms can defined as follows.
\begin{dfn}\label{dfn:\RHS}
\defOp
We define the quantity $\RHS_{I,J}$ from Equation $(I,J)$ from \eqref{thesyst} as
%
\begin{align}
\begin{split}
\RHS_{0,0}&:=-\sum_{\ell = 1}^{\mfn-1} \sum_{k=0}^\ell 
 \left(k\right)!\left({\ell}-k\right)!
\alpha_{k,\ell-k} (x_0,y_0)  \lambda_{k,{\ell}-k}
\\
&\phantom = - \LN P (x_0,y_0) - \alpha_{0,0}(x_0,y_0);
\end{split}
\end{align}
%
\begin{align}
\begin{split}
\forall J>0,\
\RHS_{0,J}
&:=\sum_{k=0}^{\mfn} 
\sum_{\tilde j=0}^{J-1}
 \left(k+\tilde i\right)!\frac{\left({\mfn}-k+\tilde j\right)!}{\tilde j!}
\Dop{0}{J-\tilde j}\alpha_{k,{\mfn}-k} (x_0,y_0) \lambda_{k,{\mfn}-k+\tilde j}\\
&\phantom =+\sum_{\ell = 1}^{\mfn-1} \sum_{k=0}^\ell 
\sum_{\tilde j=0}^{J}
 \left(k\right)!\frac{\left({\ell}-k+\tilde j\right)!}{\tilde j!}
\Dop{0}{J-\tilde j}\alpha_{k,\ell-k} (x_0,y_0)  \lambda_{k,{\ell}-k+\tilde j} 
\\
&\phantom = - \Dop{0}{J}\left[ \LN P \right](x_0,y_0) - \Dop{0}{J}\alpha_{0,0}(x_0,y_0); 
\end{split}
\end{align}
 %
\begin{align}
\begin{split}
\forall I>0,\
\RHS_{I,0}
&:=\sum_{k=0}^{\mfn} 
\sum_{\tilde i=0}^{I-1}
 \frac{\left(k+\tilde i\right)!}{\tilde i!}\left({\mfn}-k\right)!
\Dop{I-\tilde i}\alpha_{k,{\mfn}-k} (x_0,y_0) \lambda_{k+\tilde i,{\mfn}-k}\\
&\phantom =+\sum_{\ell = 1}^{\mfn-1} \sum_{k=0}^\ell 
\sum_{\tilde i=0}^{I}
\frac{ \left(k+\tilde i\right)!}{\tilde i!}\left({\ell}-k\right)!
\Dop{I-\tilde i}{0}\alpha_{k,\ell-k} (x_0,y_0)  \lambda_{k+\tilde i,{\ell}-k} 
\\
&\phantom = - \Dop{I}{0}\left[ \LN P \right](x_0,y_0) - \Dop{I}{0}\alpha_{0,0}(x_0,y_0);
\end{split}
\end{align} 
%
\begin{align}
\begin{split}
\forall (I,J),\ IJ\neq0,\ &
\RHS_{I,J}\\&:=-\sum_{k=0}^{\mfn} 
\sum_{\tilde i=0}^{I-1}\sum_{\tilde j=0}^{J-1}
 \frac{\left(k+\tilde i\right)!\left({\mfn}-k+\tilde j\right)!}{\tilde i!\tilde j !}
\Dop{I-\tilde i}{J-\tilde j}\alpha_{k,{\mfn}-k} (x_0,y_0) \lambda_{k+\tilde i,{\mfn}-k+\tilde j}\\
&\phantom =-\sum_{\ell = 1}^{\mfn-1} \sum_{k=0}^\ell 
\sum_{\tilde i=0}^{I}\sum_{\tilde j=0}^{J}
 \frac{\left(k+\tilde i\right)!\left({\ell}-k+\tilde j\right)!}{\tilde i !\tilde j!}
\Dop{I-\tilde i}{J-\tilde j}\alpha_{k,\ell-k} (x_0,y_0)  \lambda_{k+\tilde i,{\ell}-k+\tilde j}
\\
&\phantom = -\Dop{I}{J}\left[ \LN P \right](x_0,y_0) - \Dop{I}{J}\alpha_{0,0}(x_0,y_0).
\end{split}
\end{align}
\end{dfn}

[EX] 
For the operator $\mathfrak L_{2,\gamma}$  the non-linear terms in $\RHS_{0,0}$, $\RHS_{1,0}$ and $\RHS_{0,1}$  are respectively
\begin{equation*}
\mathfrak L_{2,\gamma}^N P(x_0,y_0)= 
-\lambda_{1,0}^2
+\gamma_{1,1}(x_0,y_0)\lambda_{1,0}\lambda_{0,1}
+\gamma_{0,2}(x_0,y_0)\lambda_{0,1}^2,
\end{equation*}
\begin{equation*}
\begin{array}{rl}
\dx [\mathfrak L_{2,\gamma}^N P](x_0,y_0)=&
-2\lambda_{2,0}\lambda_{1,0}
+\gamma_{1,1}(x_0,y_0)\left( 2\lambda_{2,0}\lambda_{0,1}+\lambda_{1,0}\lambda_{1,1} \right)
+2\gamma_{0,2}(x_0,y_0)\lambda_{1,1}\lambda_{0,1}
\\
&
+\dx \gamma_{1,1}(x_0,y_0)\lambda_{1,0}\lambda_{0,1}
+\dx \gamma_{0,2}(x_0,y_0)\lambda_{0,1}^2,
\end{array}
\end{equation*}
\begin{equation*}
\begin{array}{rl}
\dy [\mathfrak L_{2,\gamma}^N P](x_0,y_0)=&
-2\lambda_{1,1}\lambda_{1,0}
+\gamma_{1,1}(x_0,y_0)\left( \lambda_{1,1}\lambda_{0,1}+2\lambda_{1,0}\lambda_{2,0} \right)
+2\gamma_{0,2}(x_0,y_0)\lambda_{0,2}\lambda_{0,1}
\\
&+\dy \gamma_{1,1}(x_0,y_0)\lambda_{1,0}\lambda_{0,1}
+\dy \gamma_{0,2}(x_0,y_0)\lambda_{0,1}^2.
\end{array}
\end{equation*}

We now consider the following subsystems for given $\mfl$ between $0$ and $q-1$:
\begin{equation}\label{ssyst}
\begin{array}{c}
\displaystyle
\text{Find } 
\{\lambda_{i,j},(i,j)\in\mathbb N^2,
                i+j=\mfn +\mfl\} 
 \text{ such that }\hfill\\
\displaystyle
\forall (I,J)\in\mathbb N^2, I+J=\mfl,
\sum_{k=0}^{\mfn} \frac{(k+I)!({\mfn}-k+J)!}{I!J!} 
\alpha_{k,{\mfn}-k} (x_0,y_0) \lambda_{k+I,{\mfn}-k+J}
=\RHS_{I,J}.
\end{array}
\end{equation}
The layer structure follows from our understanding of the non-linearity of the original system:
\begin{cor}\label{cor:RIJ}
\defOp
For any $(I,J)\in\mathbb N^2$ such that $I+J<q$, the quantity $\RHS_{I,J}$ only depends on unknowns $\lambda_{i,j}$ with length of the multi-index at most equal to $\mfn+I+J-1$.
\end{cor}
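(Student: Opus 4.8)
The plan is to read off the claim directly from Definition \ref{dfn:\RHS}, which expresses each $\RHS_{I,J}$ as a finite sum of three kinds of contributions, and to bound the multi-index length of the unknowns occurring in each kind separately. The three kinds are: (a) the explicit sums over $k$ and the auxiliary indices $(\tilde i,\tilde j)$ that survive from the linear part $\LL P$ after the highest-order unknowns have been moved to the left-hand side; (b) the term $-\Dop{I}{J}[\LN P](x_0,y_0)$ coming from the non-linear part; and (c) the term $-\Dop{I}{J}\alpha_{0,0}(x_0,y_0)$, which involves no unknown at all and so is harmless. Thus it suffices to treat (a) and (b).

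For the contributions of type (a), I would simply invoke the bookkeeping already carried out in the proof of Lemma \ref{lm:lterms}: in the right-hand sides of \eqref{eq:lin2}, \eqref{eq:lin3}, \eqref{eq:lin4}, the ``second term'' collects unknowns $\lambda_{k+\tilde i,\mfn-k+\tilde j}$ with $\tilde i\leq I-1$, $\tilde j\leq J-1$, hence with multi-index length $\mfn+\tilde i+\tilde j\leq \mfn+I+J-2$, and the ``third term'' collects unknowns $\lambda_{k+\tilde i,\ell-k+\tilde j}$ with $\ell\leq \mfn-1$, $\tilde i\leq I$, $\tilde j\leq J$, hence with length $\ell+\tilde i+\tilde j\leq \mfn-1+I+J$. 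These are precisely the sums reproduced verbatim in Definition \ref{dfn:\RHS}, so every unknown occurring there has multi-index length at most $\mfn+I+J-1$. For the contribution of type (b), I would invoke Lemma \ref{lm:nlterms} directly: it asserts that the unknowns appearing in the non-linear terms of equation $(I,J)$ — that is, exactly the unknowns appearing in $\Dop{I}{J}[\LN P](x_0,y_0)$ — have multi-index length strictly less than $\mfn+I+J$, i.e.\ at most $\mfn+I+J-1$. Taking the union of the index sets from (a), (b), (c) gives the claim, and since $I+J<q$ guarantees $P$ is of high enough degree that all the referenced unknowns are genuine unknowns of System \eqref{thesyst}, nothing further is needed.

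There is essentially no obstacle here: the corollary is a repackaging of Lemmas \ref{lm:lterms} and \ref{lm:nlterms} together with the explicit formulas of Definition \ref{dfn:\RHS}, and the only point requiring a word of care is making sure that the term-by-term correspondence between Definition \ref{dfn:\RHS} and equations \eqref{eq:lin1}--\eqref{eq:lin4} is exact (in particular that the minus signs and the moved highest-order block do not reintroduce an unknown of length $\mfn+I+J$). I would state this verification explicitly but not belabor it. The payoff, worth flagging in the proof, is the structural consequence: since $\RHS_{I,J}$ depends only on unknowns of length $\le \mfn+I+J-1 = \mfn+\mfl-1$, the subsystem \eqref{ssyst} of layer $\mfl$ has a right-hand side determined entirely by the layers $0,\dots,\mfl-1$, so the family \eqref{ssyst} can be solved successively in $\mfl$ — which is exactly the hierarchy announced at the start of Subsection \ref{ssec:hier}.
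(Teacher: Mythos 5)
Your proposal is correct and follows the same route as the paper, which simply observes that the bound on the multi-index length of unknowns in $\RHS_{I,J}$ is immediate from Lemmas \ref{lm:lterms} and \ref{lm:nlterms}; you merely spell out the term-by-term verification against Definition \ref{dfn:\RHS} that the paper leaves implicit. No gap.
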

 
\begin{proof}
The result is straightforward from Lemmas \ref{lm:lterms} and \ref{lm:nlterms}.
\end{proof}

Assuming that all unknowns  $\lambda_{i,j}$ with length of the multi-index at most equal to $\mfn+I+J-1$ are known, then \eqref{ssyst} is a well-defined linear under-determined system with
 \begin{itemize}
 \item $\mfl$ linear equations, namely the $(I,J) =(I,\mfl-I)$ equations from System \eqref{thesyst} for $I$ between $0$ and $\mfl$;
 \item $\mfn+\mfl+1$ unknowns, namely the $\luij$  for $i+j=\mfn+\mfl$.
 \end{itemize}
Therefore, if  all unknowns  $\lambda_{i,j}$ with length of the multi-index at most equal to $\mfn+I+J-1$ are known, we expect to be able to compute a solution to the subsystem $\mfl$ ; this is the layer structure of our original problem \eqref{thesyst}. Figure \ref{fig:indL} highlights the link between the layers of unknowns and equations of the initial nonlinear system on the one hand, and  the layers unknowns and equations of the linear subsystems on the other hand. 
\begin{figure}
\begin{center}
\includegraphics[width=.45\textwidth,trim={5cm 12cm 2cm 4cm},clip]{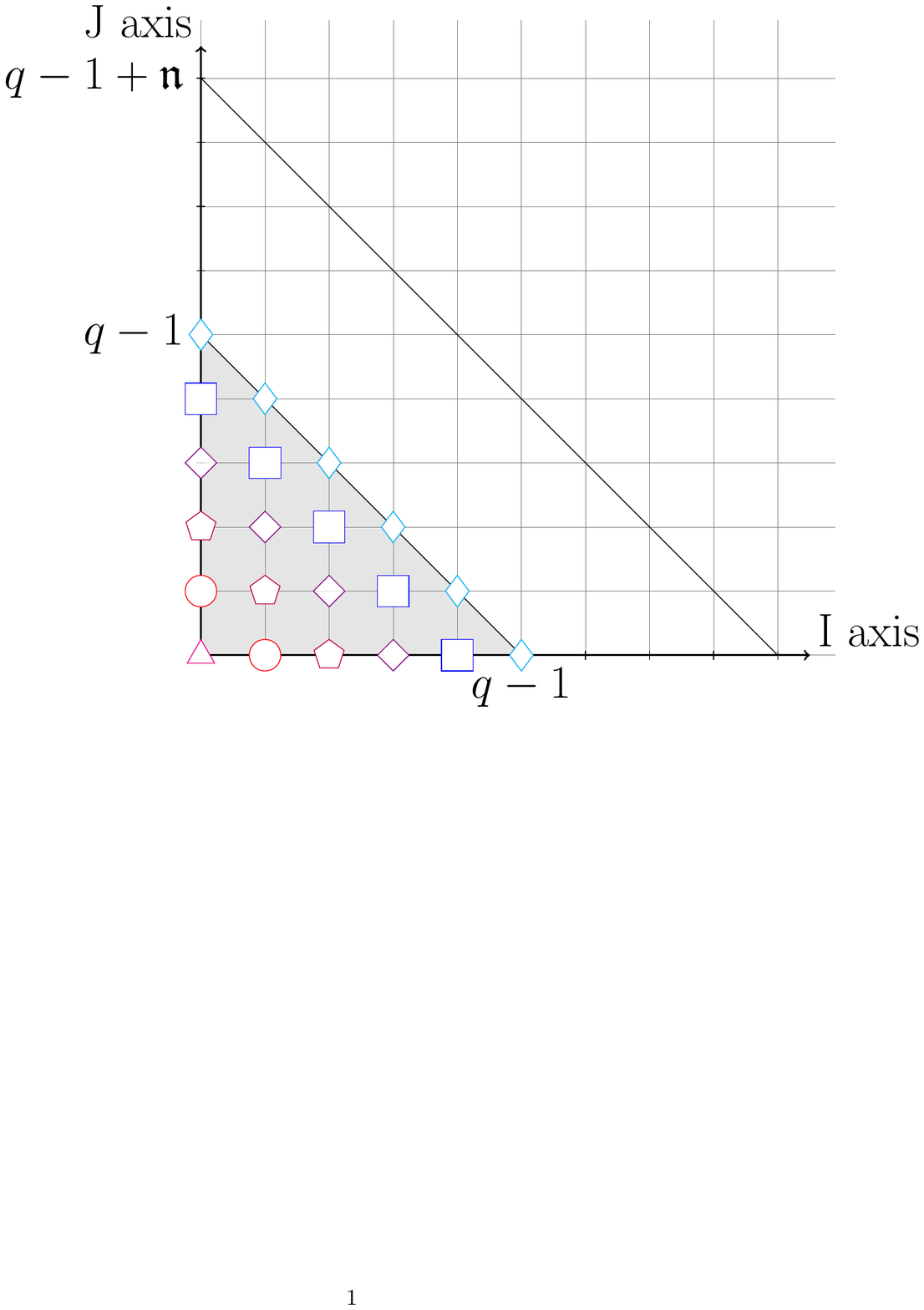}
\includegraphics[width=.45\textwidth,trim={5cm 12cm 2cm 4cm},clip]{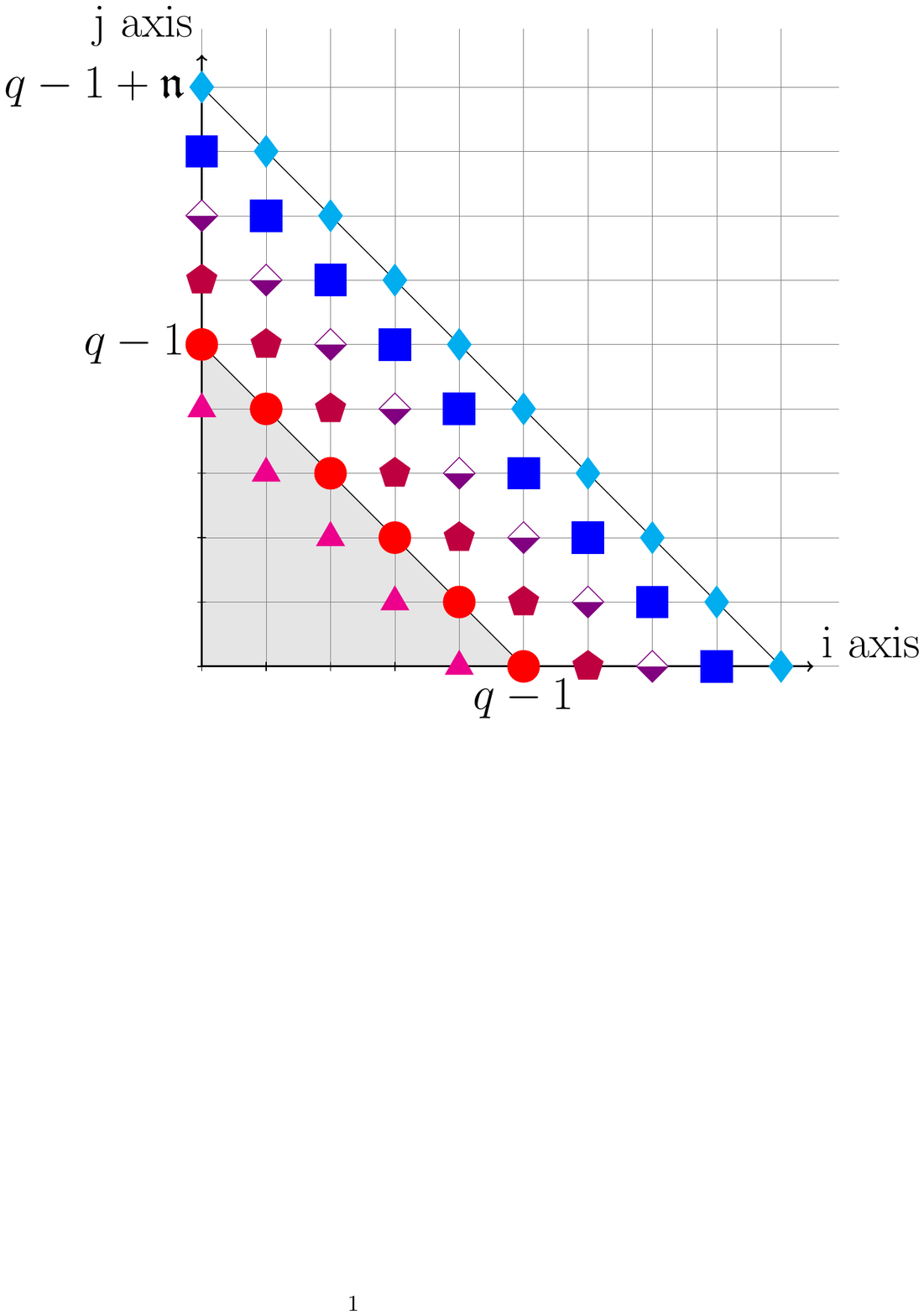}
\end{center}
\caption{Representation of the indices of equations and unknowns from the initial nonlinear system \eqref{thesyst} divided up into linear subsystems \eqref{ssyst}. 
For $q=6$ and $\mfn=4$, each shape of marker corresponds to one value of $\mfl$: the indices $(I,J)$ satisfying $I+J=\mfl$ correspond to the subsystem's equations (Left panel), while the indices  $(i,j)$ satisfying $i+j=\mfl+\mfn$ correspond to the subsystem's unknowns (Right panel).}
\label{fig:indL}
\end{figure}

At this stage, we have identified a hierarchy of under-determined linear subsystems, for increasing values of $\mfl$ from $0$ to $q-1$, and we are now going to propose one procedure to build a solution to each subsystem. There is no unique way to do so, however if either $\alpha_{\mfn,0}(x_0,y_0)\neq 0$ or $\alpha_{0,\mfn}(x_0,y_0)\neq 0$ it provides a natural way to proceed. Indeed, the unknowns involved in an equation $(I,J)=(I,\mfl-I)$ are $\{\lambda_{i,\mfn+\mfl-i} ; i\in\mathbb N, I\leq i \leq I+\mfn  \}$ ; and the coefficient of the unknown $\lambda_{I+\mfn,\mfl-I}$ is proportional to $\alpha_{\mfn,0}(x_0,y_0)$, which is non-zero under Hypothesis \ref{hyp}. Figure \ref{fig:indLex} provides two examples, in the $(i,j)$ plane, of the indices of one equation's unknowns: for each equation, the coefficient of the term corresponding to the rightmost marker is non-zero.
\begin{figure}
\begin{center}
\includegraphics[width=.45\textwidth,trim={5cm 12cm 2cm 4cm},clip]{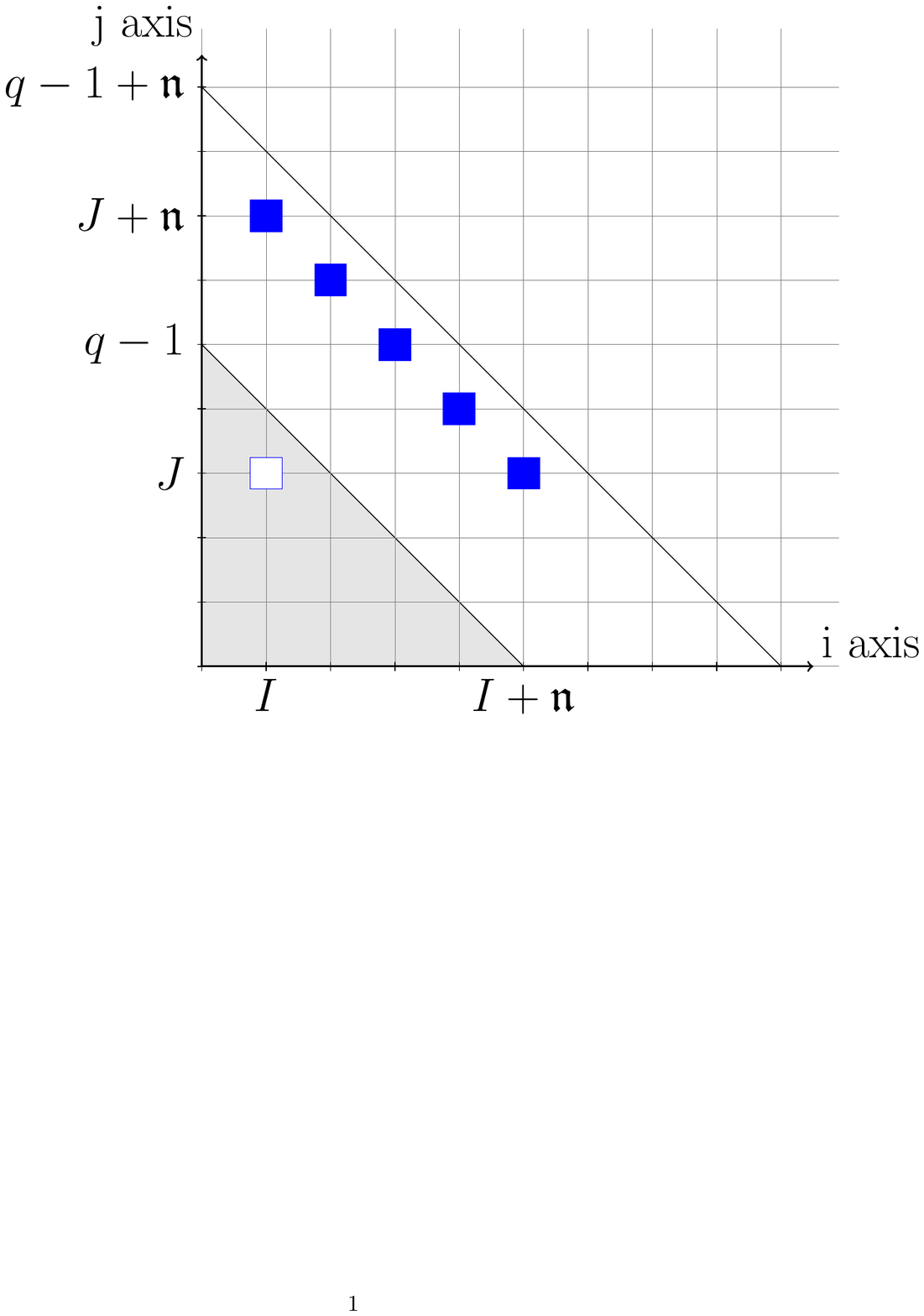}
\includegraphics[width=.45\textwidth,trim={5cm 12cm 2cm 4cm},clip]{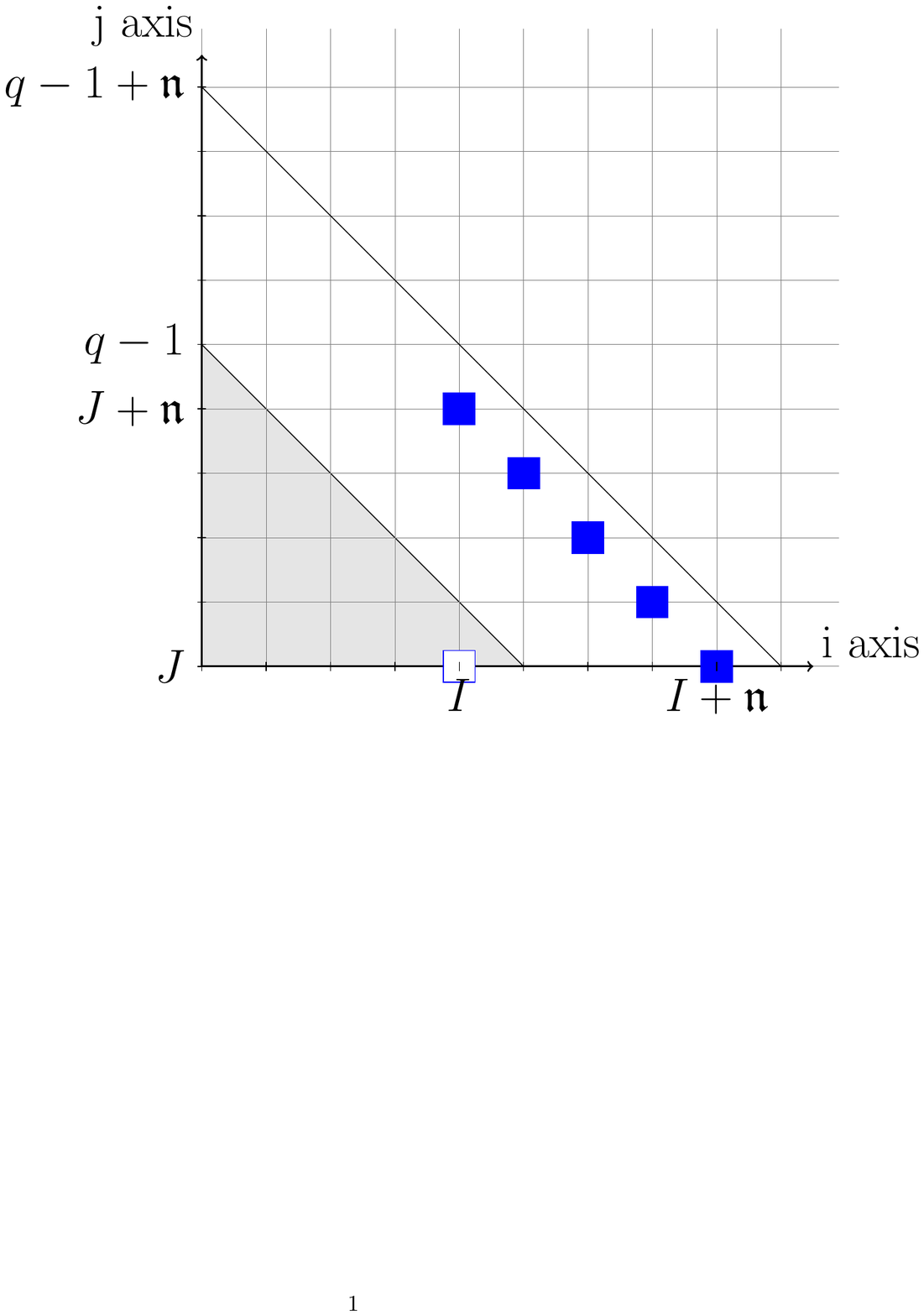}
\end{center}
\caption{Representation of the indices of unknowns involved in two equations $(I,J)$ of the subsystem \eqref{ssyst}. For $q=6$, for $\mfn = 4$, and $\mfl=4$, each  filled blue square marker corresponds  in the $(i,j)$ plane to an unknown $\luij$, involved in the $(I,J) = (1,3)$ equation (Left panel), or in the $(I,J) = (4,0)$ equation (Right panel).}
\label{fig:indLex}
\end{figure}
By adding $\mfn$ constraints corresponding to fixing the values of $\lambda_{i,\mfn+\mfl-i}$ for $0\leq i <\mfn$, that is the unknowns corresponding in the $(i,j)$ plane to first $\mfn$ markers on the left at level $\mfn+\mfl$, we therefore guarantee that for increasing values of $I$ from $0$ to $\mfl$ we can compute successively $\lambda_{I+\mfn,\mfl-I}$.

We can easily recast this in terms of matrices. At each level $\mfl$, numbering the equations with increasing values of $I$ and the unknowns with increasing values of $i$ highlights the band-limited structure of each subsystem, while the entries of the $\mfn$th super diagonal are all proportional to $\alpha_{\mfn,0}(x_0,y_0)$, and therefore non-zero under Hypothesis \ref{hyp}. The matrix of the square linear system at level $\mfl$ is then constructed from the first $\mfn$ lines of the identity, corresponding to the additional $\mfn$ constraints, placed on top of the matrix of the subsystem. 

\begin{dfn}
\defOp
For a given level $\mfl\in\mathbb N$ with $\mfl<q$, we define the matrix of the square system of level $\mfl$,  $\mathsf T^\mfl\in\mathbb C^{(\mfn +\mfl+1)\times(\mfn +\mfl+1)}$, as
\begin{equation*}
\left\{
\begin{array}{lll}
{ \mathsf{T}}_{k+1,k+1}^\mfl &= 1,&\forall k \ s.t. \ 0\leq k\leq \mfn-1,\\
{ \mathsf{T}}_{I+\mfn+1,I+k+1}^\mfl &\displaystyle = 
\frac{ (I+k)!(\mfn-k+\mfl-I)!}{I!(\mfl-I)!}
\alpha_{k,\mfn-k}(x_0,y_0),&\forall (k,I) \ s.t. \ 0\leq k\leq \mfn, \ 0\leq I\leq \mfl,\\
\mathsf T^\mfl_{k,k'} & =0,&\text{otherwise},
\end{array}
\right.
\end{equation*}
or equivalently
$$
\mathsf T^\mfl
:=
\begin{bmatrix}
1 &  & & && \\
 & \ddots & && & \\
 &  &1 & && \\\hline
\Pi_0^{0,\mfl}A_0 & \cdots & \cdots&\Pi_{\mfn}^{0,\mfl} A_\mfn&&  \\
 & \ddots &\ddots &\ddots&\ddots &  \\
 &  &\Pi_0^{\mfl,\mfl}A_0 &\cdots&\cdots& \Pi_{\mfn}^{\mfl,\mfl}  A_\mfn
\end{bmatrix}
\text{ with }
 \left\{
\begin{array}{lll}
\Pi_k^{i,\mfl}:= \frac{(k+i)!(\mfn - k +\mfl-i)! }{i!(\mfl-i)!},\\
A_{k} := \alpha_{k,\mfn-k}(x_0,y_0).
\end{array}\right.
$$
\end{dfn}

Assuming that all unknowns  $\lambda_{i,j}$ with length of the multi-index at most equal to $\mfn+I+J-1$ are known, then, as expected, a solution to the linear under-determined system \eqref{ssyst} can be computed as follows.

\begin{prop}\label{prop:CS}
\defOp
For a given level $\mfl\in\mathbb N$ with $\mfl<q$, under Hypothesis \ref{hyp}, the matrix $\mathsf T^\mfl\in\mathbb C^{(\mfn +\mfl+1)\times(\mfn +\mfl+1)}$ is non-singular.

We now assume that the unknowns $\{\lambda_{i,j}, (i,j)\in\mathbb N^2, i+j<\mfn +\mfl\}$ are known, so that the terms $N_{I,\mfl-I}$ for $I$ from $0$ to $\mfl$ can be computed. Consider any vector $\mathsf B^\mfl\in\mathbb C^{\mfn +\mfl+1}$ satisfying
\begin{equation*}
{ \mathsf{B}}_{\mfn+1+I}^\mfl 
= 
\RHS_{I,\mfl-I}
,
\forall I \ s.t. \ 0\leq I\leq \mfl.
\end{equation*}
Then independently of the first $\mfn$ components of $\mathsf B^\mfl$, solving the linear system
 \begin{equation}\label{trisyst}
 \mathsf T^\mfl \mathsf X^\mfl = \mathsf B^\mfl
 \end{equation}
  by forward substitution provides a solution to \eqref{ssyst} for
$$
\lambda_{i,\mfn+\mfl-i} = \mathsf X^\mfl_{i+1},\ \forall i\in\mathbb N \text{ such that }0\leq  i\leq \mfn+\mfl.
$$
\end{prop}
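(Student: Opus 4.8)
The plan is to recognize $\mathsf T^\mfl$ as a lower triangular matrix with non-vanishing diagonal, and then to verify by a direct index matching that the bottom $\mfl+1$ rows of the square system \eqref{trisyst} reproduce the subsystem \eqref{ssyst} verbatim. For the first point I would read off from the definition of $\mathsf T^\mfl$ that it is lower triangular: a row $r$ with $1\le r\le\mfn$ equals the $r$-th row of the identity, so its only nonzero entry sits on the diagonal; a row $r=\mfn+1+I$ with $0\le I\le\mfl$ has nonzero entries only in columns $I+1,\dots,I+\mfn+1$, the largest of which is $I+\mfn+1=r$, so every nonzero entry lies weakly below the diagonal. Its diagonal entry, obtained for $k=\mfn$, is $\Pi_\mfn^{I,\mfl}A_\mfn=\frac{(\mfn+I)!}{I!}\alpha_{\mfn,0}(x_0,y_0)$, which is nonzero under Hypothesis~\ref{hyp} (and the first $\mfn$ diagonal entries equal $1$). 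Hence $\det\mathsf T^\mfl$ is a product of nonzero scalars and $\mathsf T^\mfl$ is non-singular; in particular forward substitution applied to \eqref{trisyst} is well-defined and returns its unique solution.

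Next I would compute, for $0\le I\le\mfl$, the $(\mfn+1+I)$-th component of $\mathsf T^\mfl\mathsf X^\mfl$. Only the columns $I+1,\dots,I+\mfn+1$ contribute; writing $c=I+k+1$ with $0\le k\le\mfn$ and substituting the identification $\lambda_{i,\mfn+\mfl-i}=\mathsf X^\mfl_{i+1}$, then performing the change of index $i=k+I$ and using the elementary identity $\Pi_k^{I,\mfl}=\frac{(k+I)!(\mfn-k+J)!}{I!J!}$ with $J=\mfl-I$, one recovers exactly the left-hand side of the $(I,J)$ equation of \eqref{ssyst}. By the prescribed form of $\mathsf B^\mfl$ this component equals $\mathsf B^\mfl_{\mfn+1+I}=\RHS_{I,\mfl-I}$, the right-hand side of the same equation. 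So any solution of \eqref{trisyst} produces, through $\lambda_{i,\mfn+\mfl-i}=\mathsf X^\mfl_{i+1}$, a solution of \eqref{ssyst}. The top $\mfn$ rows of \eqref{trisyst} merely set $\mathsf X^\mfl_{k+1}=\mathsf B^\mfl_{k+1}$ for $0\le k\le\mfn-1$, i.e.\ fix the $\mfn$ free unknowns $\lambda_{0,\mfn+\mfl},\dots,\lambda_{\mfn-1,\mfl+1}$; a different choice of the first $\mfn$ entries of $\mathsf B^\mfl$ only alters these free unknowns (and what forward substitution propagates from them) without breaking the $\mfl+1$ equations of \eqref{ssyst}, which yields the clause ``independently of the first $\mfn$ components of $\mathsf B^\mfl$''.

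I do not expect a genuine obstacle here; the one point that must be secured — and that makes the whole layered construction legitimate — is that the data $\RHS_{I,\mfl-I}$ are true constants once the shorter-multi-index unknowns are fixed. This is precisely Corollary~\ref{cor:RIJ}: $\RHS_{I,\mfl-I}$ depends only on $\lambda_{i,j}$ with $i+j\le\mfn+\mfl-1$, hence on quantities already determined before layer $\mfl$ is treated. Consequently \eqref{trisyst} is an honest linear system with constant right-hand side rather than an implicit one, and the only remaining work is the routine bookkeeping of the index shift and of the factorial identity for $\Pi_k^{I,\mfl}$ recorded above.
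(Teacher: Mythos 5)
Your proof is correct and follows essentially the same route as the paper's: it uses the lower triangular structure of $\mathsf T^\mfl$ with diagonal entries $1$ and $\frac{(\mfn+I)!}{I!}\alpha_{\mfn,0}(x_0,y_0)\neq 0$ to get non-singularity, and then checks that the rows $\mfn+1+I$ of \eqref{trisyst} coincide with the equations of \eqref{ssyst} under the identification $\lambda_{i,\mfn+\mfl-i}=\mathsf X^\mfl_{i+1}$, the first $\mfn$ rows only fixing the free unknowns. Your explicit index matching and the remark on Corollary \ref{cor:RIJ} simply spell out what the paper leaves as ``derives directly from the definition''.
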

\begin{proof}
The matrix $\mathsf T^\mfl$ is lower triangular, therefore its determinant is
$$
\det \mathsf T^\mfl
 = \prod_{I=0}^{\mfl} \left(\frac{ (I+\mfn)!(\mfl-I)!}{I!(\mfl-I)!}
\alpha_{\mfn,0}(x_0,y_0)\right)
= \left(\prod_{I=0}^{\mfl} \frac{ (I+\mfn)!}{I!}\right)
\big(\alpha_{\mfn,0}(x_0,y_0)\big)^{\mfl+1},
$$
which can not be zero under Hypothesis \ref{hyp}. The second part of the claim derives directly from the definition of $ \mathsf T^\mfl$ and $ \mathsf B^\mfl$ and the fact that the system is lower triangular, and can be illustrated as follows:
$$\underbrace{\left[
\begin{array}{cccccc}
1 &  & & && \\
 & \ddots & && & \\
 &  &1 & && \\\hline
\Pi_0^{0,\mfl}A_0 & \cdots & \cdots&\Pi_{\mfn}^{0,\mfl} A_\mfn&&  \\
 & \ddots &\ddots &\ddots&\ddots &  \\
 &  &\Pi_0^{\mfl,\mfl}A_0 &\cdots&\cdots& \Pi_{\mfn}^{\mfl,\mfl}  A_\mfn
\end{array}\right]}_{\mathsf T^\mfl}
\underbrace{\begin{bmatrix}
{\lambda_{0,\mfl+\mfn}}\\\vdots\\
{\lambda_{\mfn-1,\mfl+1}}\\\hline
{\lambda_{\mfn,\mfl}}\\
{\vdots}\\
{\lambda_{\mfl+\mfn,0}}
\end{bmatrix}}_{\mathsf X^\mfl}
=
\underbrace{\begin{bmatrix}
*\\\vdots\\
*\\\hline
N_{0,\mfl}\\
\vdots\\
N_{\mfl,0}
\end{bmatrix}}_{\mathsf B^\mfl}
$$
\end{proof}

To summarize, we have defined for increasing values of $\mfl$ a hierarchy of linear systems, each of which has the following characteristics:
\begin{itemize}
\item its unknowns are $\{\lambda_{i,\mfn+\mfl-i} ;\ \forall i\in\mathbb N \text{ such that }0\leq  i\leq \mfn+\mfl\}$;
\item its matrix $\mathsf T^\mfl\in\mathbb C^{(\mfn +\mfl+1)\times(\mfn +\mfl+1)}$ is a square, non-singular, and triangular ;
\item its right-hand side depends both on $\{\lambda_{i,j} ;\ \forall(i,j)\in\mathbb N^2 \text{ such that }0\leq  i+j < \mfn+\mfl\}$ and on $\mfn$ additional parameters.
\end{itemize}
At each level $\mfl$, assuming that the unknowns of inferior levels are known and provided $\mfn$ given values for $\lambda_{i,\mfn+\mfl-i}$ for $0\leq i <\mfn$, Proposition \ref{prop:CS} provides an explicit formula to compute $\lambda_{i,\mfn+\mfl-i}$ for $\mfn\leq i\leq \mfn+\mfl $. 

\subsection{Algorithm}\label{ssec:algo}
The non-linear system \eqref{thesyst} had $N_{dof}^{ \eqref{thesyst}}=\frac{(\mfn+q)(\mfn +q+1)}{2}$ unknowns and $N_{eqn}^{ \eqref{thesyst}}=\frac{q(q+1)}{2}$ equations, whereas each linear triangular system introduced in the previous subsection has $N_{dof}^T=\mfn+\mfl+1$ unknowns and $N_{eqn}^T=\mfn+\mfl+1$ equations for each level $\mfl$ such that $0\leq\mfl\leq q-1$.
Therefore the hierarchy of triangular systems has a total of $N_{dof}^{H}=(\mfn+1)q+\frac{q(q-1)}{2}$ unknowns and $N_{eqn}^{H}=N_{eqn}^{ \eqref{thesyst}}+\mfn q=\mfn q+\frac{q(q+1)}{2}$ equations, including the $ \frac{q(q+1)}{2}$ equations of the initial non-linear system \eqref{thesyst}.

The remaining $N_{dof}^{ \eqref{thesyst}}-N_{dof}^T=\frac{\mfn(\mfn+1)}{2}$ unknowns, which are unknowns of none of the triangular systems but appear only on the right hand side of these systems, are the $\{\lambda_{i,j},(i,j)\in\mathbb N^2,0\leq i+j <\mfn\}$. These are the unknowns with length of the multi-index at most equal to $\mfn-1$, and the corresponding indices $(i,j)$ are the only ones that are not marked on the right panel of Figure \ref{fig:indL}. It is therefore natural to add $\frac{\mfn(\mfn+1)}{2}$ constraints corresponding to fixing the values of the remaining unknowns $\{\lambda_{i,j},(i,j)\in\mathbb N^2,0\leq i+j <\mfn\}$. The final system we consider consists of these $\frac{\mfn(\mfn+1)}{2}$ constraints, guaranteeing that the unknowns  $\{\lambda_{i,j},(i,j)\in\mathbb N^2,0\leq i+j <\mfn\}$ are known, together with the hierarchy of triangular systems \eqref{trisyst} for  increasing values of $\mfl$ from $0$ to $q-1$; it has $N_{dof}^{F}=\frac{(\mfn+q)(\mfn +q+1)}{2} $ unknowns, namely the unknowns of the original system \eqref{thesyst}, and $N_{eqn}^{F}=\frac{(\mfn+q)(\mfn +q+1)}{2}$ equations, namely the equations of the original system split into linear subsytems together with a total of $\frac{\mfn(\mfn+1)}{2}+q\mfn$ additional constraints.
A counting summary is presented here:
\begin{equation*}
\begin{array}{|c|c|c|}
\hline
 & \text{Number of unknowns} & \text{Number of equations}
 \\\hline
 \begin{array}{c}
 \text{Original non-linear system}\\
 \eqref{thesyst} 
 \end{array}
 & N_{dof}^{ \eqref{thesyst}}=\frac{(\mfn+q)(\mfn +q+1)}{2} &N_{eqn}^{ \eqref{thesyst}}=\frac{q(q+1)}{2}
 \\\hline
 \begin{array}{c}
 \text{Subsystem at level } \mfl\\
 \eqref{ssyst}
 \end{array}
 & N_{dof}^\mfl=\mfn+\mfl+1 & N_{eqn}^\mfl=\mfl+1
 \\\hline\hline
 \begin{array}{c}
 \text{Triangular system at level } \mfl\\
 \eqref{trisyst}
 \end{array}
 & N_{dof}^T=\mfn+\mfl+1 & N_{eqn}^T=\mfn+\mfl+1
 \\\hline
 \begin{array}{c}
 \text{Hierarchy of triangular systems} \\ \text{for $\mfl$ from $0$ to $q-1$}
 \end{array}
 & N_{dof}^{H}=(\mfn+1)q+\frac{q(q-1)}{2} & N_{eqn}^{H}=\mfn q+\frac{q(q+1)}{2}
 \\\hline\hline
 \begin{array}{c}
 \text{Final system} \\ \text{(initial constraints + triangular systems)}
 \end{array}
  & N_{dof}^{F}=\frac{(\mfn+q)(\mfn +q+1)}{2} & N_{eqn}^{F}=\frac{(\mfn+q)(\mfn +q+1)}{2}
 \\\hline
\end{array}
\end{equation*}
Thanks to the $\frac{\mfn(\mfn+1)}{2}$ constraints, for increasing values of $\mfl$ from $0$ to $q-1$, the hypothesis of Proposition \ref{prop:CS} is satisfied, the right hand side $\mathsf B^\mfl$ can be evaluated and the triangular system \eqref{trisyst} can be solved. So the unknowns $\{\lambda_{i,\mfn+\mfl-i} ;\ \forall i\in\mathbb N \text{ such that }0\leq  i\leq \mfn+\mfl\}$ can be computed by induction on $\mfl$, constructing a solution to the initial non-linear system \eqref{thesyst} by induction on $\mfl$.

The following algorithm requires the value of $\frac{\mfn(\mfn+1)}{2}+q\mfn$ parameters, to fix initially the set of unknowns $\{\lambda_{i,j},(i,j)\in\mathbb N^2,0\leq i+j <\mfn\}$ and then at each level $\mfl$ the set of unknowns  $\{\lambda_{i,\mfn+\mfl-i},i\in\mathbb N,0\leq i<\mfn\}$. Under Hypothesis \ref{hyp}, the algorithm presents a sequence of steps to construct explicitly a solution to Problem \eqref{thesyst} and requires no approximation process. 
\begin{algorithm}[H] 
\caption{Constructing a solution to Problem \eqref{thesyst}}
\label{algo}
\begin{algorithmic}[1]
\State\label{fix1} Fix $\{\lambda_{i,j},(i,j)\in\mathbb N^2,0\leq i+j <\mfn\}$\Comment{ $\frac{\mfn(\mfn+1)}{2}$ unknowns}
\For{$\mfl$ from $0$ to $q-1$}\label{Lloop}\Comment{ $q$ times $\phantom{unknoni}$}
\State\label{fix2} Fix $\{\lambda_{i,\mfn+\mfl-i},i\in\mathbb N,0\leq i<\mfn\}$\Comment{ $\mfn$ unknowns$\phantom{un}$}
\For{$I$ from $ 0$ to $\mfl$}\Comment{ $\mfl+ 1$ times$\phantom{un}$}
\State\label{ExFo} 
%
$
\displaystyle \lambda_{I+\mfn,\mfl-I} := 
\displaystyle \frac{1}{\mathsf T^\mfl_{I+\mfn+1,I+\mfn+1}}\left(\mathsf B^\mfl_{I+\mfn+1} - 
\sum_{k=0}^{\mfn-1}\mathsf T^\mfl_{I+\mfn+1,I+k+1} \lambda_{I+k,\mfn+\mfl-I-k}\right)
$
\Comment{ $1$ unknown}
\EndFor
\EndFor
\end{algorithmic}
\end{algorithm}

From the definitions of $\mathsf T^\mfl$ and $\mathsf B^\mfl$ we immediately see that the step $5$ boils down to
\begin{equation}\label{eq:ls}
\lambda_{I+\mfn,\mfl-I} =\frac{I!}{(I+\mfn)!\alpha_{\mfn,0}(x_0,y_0)}\left(\RHS_{I,\mfl-I} - 
\sum_{k=0}^{\mfn-1} \frac{(I+k)!(\mfn-k+\mfl-I)!}{I!(\mfl-I)!}\alpha_{k,\mfn-k}(x_0,y_0) \lambda_{I+k,\mfn+\mfl-I-k}\right)
\end{equation}
 
If the set of unknowns $\{\lambda_{i,j},(i,j)\in\mathbb N^2,0\leq i+j <\mfn+q-1\}$ is computed from Algorithm \ref{algo}, then the polynomial
 $\displaystyle P(x,y):= \sum_{0\leq i+j\leq q+\mfn-1} \lambda_{i,j}(x-x_0)^i(y-y_0)^j$ is a solution to Problem \eqref{thesyst}, and therefore the function $\displaystyle\varphi(\mathbf x) :=\exp P(\mathbf{x})$ satisfies \eqref{eq:Lhq}. This is true independently of the values fixed in lines \algref{algo}{fix1} and \algref{algo}{fix2} of the algorithm.

\begin{rmk}\label{rmk:chgtp}
 It is interesting to notice that the algorithm applies to a wide range of partial differential operators,
 including type changing operators such as Keldysh operators, $L_K=\partial_x^2 + y^{2m+1}\partial_y^2+$ lower order terms, or Tricomi operators, $ L_T=\partial_x^2 + x^{2m+1}\partial_y^2 +$ lower order terms, that change from elliptic to hyperbolic type along a smooth parabolic curve.
 \end{rmk}

To conclude this section, we provide a formal definition of a GPW associated to an partial differential operator at a given point.
\begin{dfn}
\defOp
 A Generalized Plane Wave (GPW) associated to the differential operator $\Lal$ at the point $(x_0,y_0)$ is a function $\varphi$ satisfying
 $$
 \Lal \varphi{(x,y)} = O(\|(x-x_0,y-y_0)\|^q).
 $$

Under Hypothesis \ref{hyp}, a Generalized Plane Wave (GPW) can be constructed as function $\varphi (x,y)= \exp P(x,y) $, where the coefficients of the polynomial $P$ are computed by Algorithm \ref{algo}, independently of the values fixed in the algorithm. 
\end{dfn}

The crucial feature of the construction process is the {exact} solution provided in the algorithm: in practice, a solution to the initial non-linear rectangular system is computed without numerical resolution of any system, {\bf with an  explicit formula}.

The choice of the fixed values in Algorithm \ref{algo} will be discussed in the next paragraph. Even though these values does not affect the construction process, and the fact that the corresponding $\varphi (x,y)= \exp P(x,y) $ is a GPW, it will be key to prove the interpolation properties of the corresponding set of GPWs. 
\begin{rmk}\label{rmk:sim}
Under the hypothesis $\alpha_{0,\mfn}(x_0,y_0)\neq 0$ it would be natural to fix the values of $\{ \lambda_{i,j}, 0\leq j\leq \mfn -1, 0\leq i\leq q+\mfn -1 -j \}$ instead of those of $\{ \lambda_{i,j}, 0\leq i\leq \mfn -1, 0\leq j\leq q+\mfn -1 -i \}$, and an algorithm very similar to Algorithm \ref{algo}, exchanging the roles of $i$ and $j$ would construct the polynomial coefficients of a GPW.
\end{rmk}


\section{Normalization}\label{sec:norm}
We will refer to normalization as the choice of imposed values in Algorithm \ref{algo}. The discussion presented in this section will be summarized in Definition \ref{df:norm}.

Within the construction process presented in the previous section, only the design of the function $\varphi$ as the exponential of a polynomial is related to wave propagation, while Algorithm \ref{algo} works for partial differential operators not necessarily related to wave propagation. In particular, the property $\Lal \varphi(x,y)=O\left(\| (x,y)-(x_0,y_0) \|^q\right)$ of GPWs is independent of the choice of $(\luoz,\luzo)$. However, the normalization process described here carries on the idea of adding higher order terms to the phase function of a plane wave, see \eqref{eq:c/GPW}, as was proposed in \cite{LMinterp}.

We will now restrict our attention to a smaller set of partial differential operators that include several interesting operators related to wave propagation, thanks to an additional hypothesis on the highest order derivatives in $\Lal$, namely Hypothesis \ref{hyp2}.
Under this hypothesis we will be able to study the interpolation properties of associated GPWs in a unified framework. As we will see in this section, choosing only two non-zero fixed values in Algorithm \ref{algo} is sufficient to generate a set of linearly independent GPWs. It is then natural to study how the rest of the $\luij$s depend on those two values, and the related consequences of Hypothesis \ref{hyp2}. These rely on Hypothesis \ref{hyp2} extending the fact that for classical PWs 
$(i\kappa\cos\theta)^2+(i\kappa\sin\theta)^2 = -\kappa^2 $ is independent of $\theta$.

\subsection{For every GPWs}\label{ssec:every}

In Algorithm \ref{algo}, the number of prescribed coefficients is $\frac{\mfn(\mfn+1)}{2}+\mfn q$, and the set of coefficients to be prescribed is the set $\{ \lambda_{i,j}, 0\leq i\leq \mfn -1, 0\leq j\leq q+\mfn -1 -i \}$.

For the sake of simplicity, it is natural to choose most of these values to be zero.
Since the unknown $\lambda_{0,0}$ never appears in the non-linear system, there is nothing more natural than setting it to zero: this ensures that any GPW $\varphi$ will satisfy $\varphi(x_0,y_0)=1$.
Concerning the subset of $\mfn q$ unknowns corresponding to step \algref{algo}{fix2} in Algorithm \ref{algo}, setting these values to zero simply reduces the amount of computation involved  in step \algref{algo}{ExFo} in the algorithm: indeed for $I=0$ then $\displaystyle \sum_{k=0}^{\mfn-1}\mathsf T^\mfl_{I+\mfn+1,I+k+1} \lambda_{I+k,\mfn+\mfl-I-k}=0$, while for $0<I<\mfn$ then $$\displaystyle \sum_{k=0}^{\mfn-1}\mathsf T^\mfl_{I+\mfn+1,I+k+1} \lambda_{I+k,\mfn+\mfl-I-k}=\sum_{k=\mfn-\mfl}^{\mfn-1}\mathsf T^\mfl_{I+\mfn+1,I+k+1} \lambda_{I+k,\mfn+\mfl-I-k}.$$

As for the unknowns $\lambda_{1,0}$ and $\lambda_{0,1}$, they will be non-zero to mimic the classical plane wave case, and their precise choice will be discussed in the next subsection. For the remaining unknowns to be fixed, that is to say the set $\{ \lambda_{i,j}, 2\leq i+j\leq \mfn -1 \}$, their values are set to zero, here again in order to reduce the amount of computation in computing the right hand side entries $\mathrm B^{\mathfrak L}_{\mfn+1+I}$ and in applying \algref{algo}{ExFo}.

For the operator $\mathfrak L_{2,\gamma}$  the non-linear terms in $\RHS_{1,0}$ and $\RHS_{0,1}$  respectively become with this normalization
\begin{equation*}
\begin{array}{rl}
\dx [\mathfrak L_{2,\gamma}^N P](x_0,y_0)=&
-2\lambda_{2,0}\lambda_{1,0}
+\gamma_{1,1}(x_0,y_0)
2\lambda_{2,0}\lambda_{0,1}
+\dx \gamma_{1,1}(x_0,y_0)\lambda_{1,0}\lambda_{0,1}
+\dx \gamma_{0,2}(x_0,y_0)\lambda_{0,1}^2,
\end{array}
\end{equation*}
\begin{equation*}
\begin{array}{rl}
\dy [\mathfrak L_{2,\gamma}^N P](x_0,y_0)=&
\gamma_{1,1}(x_0,y_0)
2\lambda_{1,0}\lambda_{2,0}
+\dy \gamma_{1,1}(x_0,y_0)\lambda_{1,0}\lambda_{0,1}
.
\end{array}
\end{equation*}

Since all but two of the unknowns to be fixed in Algorithm \ref{algo} are set to zero, it is now natural to express the $\lambda_{i,j}$ unknowns computed from \algref{algo}{ExFo} in the algorithm as functions of the two non-zero prescribed unknowns, $\lambda_{1,0}$ and $\lambda_{0,1}$. 

\begin{lmm}
\defOp
Under Hypothesis \ref{hyp} consider a solution to Problem \eqref{thesyst} constructed thanks to Algorithm \ref{algo}  with all the prescribed values $\lambda_{i,j}$ such that $i<\mfn$ and $i+j\neq 1$ set to zero. Each $\lambda_{i+\mfn,j}$ can be expressed as an element of $\mathbb C[\lambda_{1,0},\lambda_{0,1}]$.
\end{lmm}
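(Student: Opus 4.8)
The plan is to proceed by induction on the level $\mfl$, using the explicit formula \eqref{eq:ls} provided by Algorithm \ref{algo}. The claim to be proven is that every $\lambda_{i+\mfn,j}$, i.e.\ every unknown with length of the multi-index at least $\mfn$, lies in the polynomial ring $\mathbb C[\lambda_{1,0},\lambda_{0,1}]$ (with coefficients depending only on the fixed data, namely the derivatives of the $\alpha_{k,l}$ at $(x_0,y_0)$). Since the unknowns of length $<\mfn$ are fixed --- $\lambda_{0,0}=0$, $\lambda_{1,0}$ and $\lambda_{0,1}$ left free, and $\lambda_{i,j}=0$ for $2\leq i+j\leq\mfn-1$ --- all of these trivially lie in $\mathbb C[\lambda_{1,0},\lambda_{0,1}]$, which anchors the induction. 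The inductive step at level $\mfl$ concerns the unknowns $\lambda_{i,\mfn+\mfl-i}$ for $0\leq i\leq\mfn+\mfl$; those with $i<\mfn$ are the prescribed values from \algref{algo}{fix2}, set to zero, so they belong to $\mathbb C[\lambda_{1,0},\lambda_{0,1}]$, and it remains to treat $\lambda_{I+\mfn,\mfl-I}$ for $0\leq I\leq\mfl$.

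First I would invoke Corollary \ref{cor:RIJ}: the right-hand side $\RHS_{I,\mfl-I}$ depends only on unknowns $\lambda_{i,j}$ with $i+j\leq\mfn+\mfl-1$, all of which, by the induction hypothesis together with the treatment of the prescribed level-$\mfl$ unknowns just mentioned, already lie in $\mathbb C[\lambda_{1,0},\lambda_{0,1}]$. Moreover $\RHS_{I,\mfl-I}$ is built from these unknowns by the operations appearing in Definition \ref{dfn:\RHS} --- multiplication by the constants $\Dop{\cdot}{\cdot}\alpha_{k,l}(x_0,y_0)$, products coming from $\Dop{I}{J}[\LN P](x_0,y_0)$ via \eqref{eq:NLt0}, and addition --- all of which preserve membership in the polynomial ring $\mathbb C[\lambda_{1,0},\lambda_{0,1}]$. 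Hence $\RHS_{I,\mfl-I}\in\mathbb C[\lambda_{1,0},\lambda_{0,1}]$. Then formula \eqref{eq:ls} expresses $\lambda_{I+\mfn,\mfl-I}$ as $\frac{I!}{(I+\mfn)!\alpha_{\mfn,0}(x_0,y_0)}$ times $\RHS_{I,\mfl-I}$ minus a $\mathbb C$-linear combination (with coefficients involving only the constants $\alpha_{k,\mfn-k}(x_0,y_0)$) of the unknowns $\lambda_{I+k,\mfn+\mfl-I-k}$ for $0\leq k\leq\mfn-1$. Crucially, each such $\lambda_{I+k,\mfn+\mfl-I-k}$ has first index $I+k<I+\mfn$: for $k<\mfn-\mfl$ it is a prescribed (zero) value, and for $\mfn-\mfl\leq k$ one needs $I+k$ to have been computed at an \emph{earlier} stage of the $I$-loop, i.e.\ $\lambda_{I+k,\mfn+\mfl-I-k}$ with $I+k\geq\mfn$ corresponds to index $I'=I+k-\mfn<I$. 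So I would run a nested (inner) induction on $I$ from $0$ to $\mfl$ at the fixed level $\mfl$: the only non-prescribed unknowns on the right-hand side of \eqref{eq:ls} are those already produced by the inner induction, hence already in $\mathbb C[\lambda_{1,0},\lambda_{0,1}]$, and since $\alpha_{\mfn,0}(x_0,y_0)\neq 0$ by Hypothesis \ref{hyp} the division is legitimate and stays in the ring. This closes both inductions.

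The main obstacle, and the point deserving the most care, is the bookkeeping of \emph{which} unknowns actually occur on the right-hand side of \eqref{eq:ls} and the verification that they have all been determined and shown to be polynomial in $(\lambda_{1,0},\lambda_{0,1})$ before $\lambda_{I+\mfn,\mfl-I}$ is computed. This is precisely where the layered structure established in Lemmas \ref{lm:lterms} and \ref{lm:nlterms} and Corollary \ref{cor:RIJ} does the real work: the nonlinear contributions $\Dop{I}{J}[\LN P](x_0,y_0)$ involve only unknowns of strictly smaller multi-index length, and within a level the triangular (forward-substitution) structure of $\mathsf T^\mfl$ guarantees the inner induction on $I$ is well-founded. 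Once this dependency analysis is in place, the argument is a routine propagation of the ``polynomial in $(\lambda_{1,0},\lambda_{0,1})$'' property through the ring operations in \eqref{eq:ls}, and I would present it as the two-layer induction (outer on $\mfl$, inner on $I$) sketched above.
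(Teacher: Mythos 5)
Your proof is correct and takes essentially the same route as the paper, whose own argument simply states that the claim is a direct consequence of the explicit formula \eqref{eq:ls} in Algorithm \ref{algo} together with the zero prescriptions; your outer induction on $\mfl$ and inner induction on $I$ merely make that dependency bookkeeping explicit, using Corollary \ref{cor:RIJ} and $\alpha_{\mfn,0}(x_0,y_0)\neq 0$ exactly as intended. (One trivial slip: the threshold separating prescribed from previously computed unknowns among the $\lambda_{I+k,\mfn+\mfl-I-k}$ is $k<\mfn-I$, not $k<\mfn-\mfl$, but your surrounding description of which terms are prescribed versus already computed is right.)
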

\begin{proof}
The fact that $\lambda_{i+\mfn,j}$ can be expressed as a polynomial in two variables with respect to $\lambda_{1,0}$ and $\lambda_{0,1}$ is a direct consequence from the explicit formula in step \algref{algo}{ExFo} in Algorithm \ref{algo} combining with setting $\lambda_{i,j}$ such that $i<\mfn$ and $i+j\neq 1$ to zero.
\end{proof}
Since unknowns are expressed as elements of $\mathbb C[\lambda_{1,0},\lambda_{0,1}]$, we will now study the degree of various terms from Algorithm \ref{algo} as polynomials with respect to  $\lambda_{1,0}$ and $\lambda_{0,1}$. To do so, we will start by inspecting the product terms appearing in Faa di Bruno's formula.
\begin{lmm}\label{lm:LNP}
\defOp
Consider a given polynomial $P\in\mathbb C[x,y]$. The non-linear terms $\LN P$, expressed as linear combinations of products of derivatives of $P$, namely $\prod_{m=1}^s \left( \dx^{i_m}\dy^{j_m} P \right)^{k_m}$, contain products of up to $\mfn$ derivatives of $P$, namely $\dx^{i_m}\dy^{j_m} P $, counting repetitions. The only products that have exactly $\mfn$ terms are 
$(\dx P)^k(\dy P)^{\mfn-k}$ for $0\leq k \leq \mfn$, 
whereas all the other products have less than $\mfn$ terms.
\end{lmm}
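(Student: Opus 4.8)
The plan is to recall the definition of $\LN$ and trace through Faa di Bruno's formula to bound the number of derivative factors. Recall that
$$
\LN = \sum_{\ell = 1}^{\mfn} \sum_{k=0}^\ell k!{(\ell-k)}! \alpha_{k,\ell-k}
\sum_{2\leq\mu\leq \ell} \sum_{s=1}^{\ell} \sum_{p_s((k,\ell-k),\mu)} \prod_{m=1}^s \frac{1}{k_m!}\left( \Dop{i_m}{j_m}  (\cdot) \right)^{k_m},
$$
so a typical product term is $\prod_{m=1}^s \left( \dx^{i_m}\dy^{j_m} P \right)^{k_m}$ where, by the definition of $p_s((k,\ell-k),\mu)$, we have $\sum_{m=1}^s k_m = \mu$ with $2\le\mu\le\ell\le\mfn$. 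The total number of derivative factors, counting repetitions, is exactly $\mu$, hence at most $\mfn$. This immediately gives the first assertion.

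For the characterization of the products with exactly $\mfn$ factors, I would argue that $\mu=\mfn$ forces $\ell=\mfn$ (since $\mu\le\ell\le\mfn$), and then the constraints $\sum_{m=1}^s k_l i_l = k$ and $\sum_{m=1}^s k_l j_l = \mfn-k$ together with $\sum_{m=1}^s k_l = \mfn$ mean that $\sum_{m=1}^s k_l(i_l+j_l) = \mfn = \sum_{m=1}^s k_l$. Since each multi-index appearing satisfies $0\prec(i_1,j_1)\prec\cdots\prec(i_s,j_s)$, in particular $i_m+j_m\geq 1$ for every $m$; comparing $\sum k_l(i_l+j_l)=\sum k_l$ with $i_l+j_l\ge 1$ forces $i_l+j_l = 1$ for every $l$, i.e. each $(i_l,j_l)\in\{(1,0),(0,1)\}$. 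Since the multi-indices are strictly $\prec$-increasing, there are at most two of them, so $s\le 2$ and the product is of the form $(\dx P)^a(\dy P)^b$ with $a+b=\mfn$; writing $a=k$ gives exactly $(\dx P)^k(\dy P)^{\mfn-k}$ for $0\le k\le\mfn$. Conversely each such product does arise (from $\ell=\mfn$, $\mu=\mfn$, and the appropriate choice of $s$, $k_m$, $(i_m,j_m)$), with $k$ matching the first index of $\alpha_{k,\mfn-k}$.

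Finally, any product term that is not of this form must have $\mu<\mfn$, hence strictly fewer than $\mfn$ derivative factors; this is the last assertion. The main obstacle, such as it is, is purely bookkeeping: one must be careful that the combinatorial constraints in $p_s((k,\ell-k),\mu)$ are read correctly, in particular that $i_m+j_m\ge 1$ follows from $0\prec(i_m,j_m)$ and that equality in $\sum k_l(i_l+j_l)=\sum k_l$ forces every term to be $1$. No serious analytic difficulty arises; everything follows from unwinding Faa di Bruno's formula and Definition \ref{def:LA}.
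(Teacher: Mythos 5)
Your proof is correct and follows essentially the same route as the paper: unwinding Faa di Bruno's formula, noting that the number of factors counting repetitions is $\mu\leq\ell\leq\mfn$, and exploiting $\ell=\sum_m k_m(i_m+j_m)\geq\sum_m k_m=\mu$ with $i_m+j_m\geq 1$ to force $i_m+j_m=1$ in the extremal case. The only difference is cosmetic: you give one uniform argument where the paper splits into the cases $s=1$, $s=2$, $s\geq 3$, but the underlying inequality and conclusion are identical.
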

\begin{proof}
Since the operator $\LN$ is defined via Faa di Bruno's formula, we will proceed by careful examination of the summation and product indices in the latter.

The number of terms in the product term is $s$, with possible repetitions counted thanks to the $k_m$s, and the total number of terms counting repetitions is $\mu=\sum_{m=1}^s k_m$. Since in $\LN$ the indices are such that $1\leq \mu\leq \ell\leq\mfn$, there cannot be more than $\mfn$ terms counting repetitions in any of the $\prod_{m=1}^s \left( \dx^{i_m}\dy^{j_m} P \right)^{k_m}$.

For $s=1$, in the set $p_1((k,\ell-k),\mu)$, $(i_1,j_1)\in\mathbb N^2$ are such that $i_1+j_1
\geq 1$ and $k_1\in\mathbb N$ is such that $k_1(i_1+j_1)=\ell$. Since $\ell\leq \mfn$, such a term appears in Faa di Bruno's formula as a product of $\mu=\mfn$ terms if and only if $\ell=\mfn$, $k_1=\mfn$, and therefore $i_1+j_1= 1$. There are then only two possibilities: either $(i_1,j_1)=(1,0)$ corresponding to the term $(\dx P)^\mfn$, or $(i_1,j_1)=(0,1)$ corresponding to the term $(\dy P)^\mfn$.

For $s=2$, in the set $p_2((k,\ell-k),\mu)$, $(i_1,j_1,i_2,j_2)\in\mathbb N^4$ are such that $i_1+j_1\geq 1$, $i_2+j_2\geq 1$, $(i_1,j_1)\prec(i_2,j_2)$, and $(k_1,k_2)\in\mathbb N^2$ is such that $\mu=k_1+k_2$ and  $k_1(i_1+j_1)+k_2(i_2+j_2)=\ell$. Since $\ell\leq \mfn$ and $\ell=k_1(i_1+j_1)+k_2(i_2+j_2)\geq k_1+k_2=\mu$, such a term appears in Faa di Bruno's formula as a product of $\mu=\mfn$ terms if and only if $\ell=\mfn$ and $k_1+k_2=\mfn$. There are then two possible cases: either $i_2+j_2>1$, then $\mfn=k_1(i_1+j_1)+k_2(i_2+j_2)>k_1+k_2=\mfn$, so there is no such term in the sum, or $i_2+j_2=1$, then necessarily $(i_1,j_1)=(0,1)$ and $(i_2,j_2)=(1,0)$, corresponding to the terms $(\dx P)^k(\dy P)^{\mfn-k}$ for any $k$ from $0$ to $\mfn$.

For $s\geq 3$, in the set $p_s((k,\ell-k),\mu)$, for all $m\in\mathbb N$ such that $1\leq m\leq s$, $(i_m,j_m)\in\mathbb N^2$ and $k_m\in\mathbb N$ are such that $i_m+j_m\geq 1$, $\sum_{m=1}^s k_m(i_m+j_m)=\ell$, $\mu=\sum_{m=1}^s k_m$ and  $(i_1,j_1)\prec(i_2,j_2)\prec(i_3,j_3)$. Because of this last condition, it is clear that $i_3+j_3>1$. Since $\ell\leq \mfn$ and $\ell=\sum_{m=1}^s k_m(i_m+j_m)\geq \sum_{m=1}^s k_m=\mu$, such a term appears in Faa di Bruno's formula as a product of $\mu=\mfn$ terms if and only if $\ell=\mfn$ and $\sum_{m=1}^s k_m=\mfn$. But then $\mfn=\sum_{m=1}^s k_m(i_m+j_m)> \sum_{m=1}^s k_m=\mfn$, so there is no such term in the sum.

The claim is proved.
\end{proof}
\begin{lmm}\label{lm:IJLNP}
\defOp
Consider a given polynomial $P\in\mathbb C[x,y]$. The quantity $\dx^{I_0}\dy^{J_0}\LN P$ is a linear combination of terms $\dx^{I_0}\dy^{J_0}\left(\prod_{m=1}^s \left( \dx^{i_m}\dy^{j_m} P \right)^{k_m}\right)$, where the indices come from Faa di Bruno's formula. Each of these $\dx^{I_0}\dy^{J_0}\left(\prod_{m=1}^s \left( \dx^{i_m}\dy^{j_m} P \right)^{k_m}\right)$ can be expressed as  a linear combination  of products $\prod_{m=1}^t (\dx^{a_m}\dy^{b_m} P)^{c_m}$ where the indices satisfy $\sum_{m=1}^t c_m(a_m+b_m)\leq {I_0}+{J_0}+\mfn$.
\end{lmm}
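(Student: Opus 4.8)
The plan is to track how the differential operators $\partial_x^{I_0}\partial_y^{J_0}$ interact with a product $\prod_{m=1}^s(\partial_x^{i_m}\partial_y^{j_m}P)^{k_m}$ that comes from Faà di Bruno's formula, and to show that applying $\partial_x^{I_0}\partial_y^{J_0}$ increases the relevant weighted index sum by at most $I_0+J_0$. The key invariant to follow is the quantity $\sum_m c_m(a_m+b_m)$ attached to a product $\prod_m(\partial_x^{a_m}\partial_y^{b_m}P)^{c_m}$; I will call it the \emph{weight} of the product. The starting product from Lemma~\ref{lm:LNP} has weight $\sum_{m=1}^s k_m(i_m+j_m)=\ell\leq\mfn$, so the target inequality $\sum_m c_m(a_m+b_m)\leq I_0+J_0+\mfn$ will follow once I show that differentiating once (in $x$ or $y$) raises the weight by at most $1$, and that this bound is stable under taking linear combinations.

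First I would reduce to the case $I_0+J_0=1$ and argue by induction on $I_0+J_0$, since $\partial_x^{I_0}\partial_y^{J_0}=\partial_x(\partial_x^{I_0-1}\partial_y^{J_0})$ (or the analogous reduction in $y$), and a linear combination of products each of weight $\leq W$ is again a linear combination of products of weight $\leq W$. So it suffices to examine $\partial_x\big[\prod_{m=1}^t(\partial_x^{a_m}\partial_y^{b_m}P)^{c_m}\big]$. By the Leibniz rule this is a sum over $m$ of terms in which one factor $\partial_x^{a_m}\partial_y^{b_m}P$ is replaced by $\partial_x^{a_m+1}\partial_y^{b_m}P$ (and the exponent bookkeeping $c_m\mapsto c_m-1$ plus a new factor of exponent $1$ is just a rewriting). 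In each such term the weight changes from $\sum_m c_m(a_m+b_m)$ to $\sum_m c_m(a_m+b_m)+1$: one unit of $a_m$ is added, counted once. Hence each resulting product has weight exactly one more than the original, which is $\leq W+1$. Iterating $I_0+J_0$ times gives weight $\leq \ell+I_0+J_0\leq \mfn+I_0+J_0$, as claimed; that the whole expression $\partial_x^{I_0}\partial_y^{J_0}\LN P$ is a linear combination of such products follows since $\LN P$ itself is, by the definition of $\LN$ via Faà di Bruno's formula, and $\partial_x^{I_0}\partial_y^{J_0}$ is linear.

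The only mildly delicate point—and the one I expect to be the main obstacle in writing this cleanly rather than conceptually—is the exponent bookkeeping: when $\partial_x$ hits a factor $(\partial_x^{a_m}\partial_y^{b_m}P)^{c_m}$ it produces $c_m\,(\partial_x^{a_m}\partial_y^{b_m}P)^{c_m-1}(\partial_x^{a_m+1}\partial_y^{b_m}P)$, so the list of pairs and exponents must be re-normalized into the canonical form $\prod_{m=1}^{t'}(\partial_x^{a'_m}\partial_y^{b'_m}P)^{c'_m}$, possibly merging the new factor with an already-present factor of the same derivative order, and $t'$ may differ from $t$. One has to check that this re-normalization does not change the weight beyond the $+1$ already accounted for, which is immediate because the weight is additive over factors counted with multiplicity and merging identical factors leaves $\sum c'_m(a'_m+b'_m)$ untouched. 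Once that observation is recorded, the induction closes and the lemma follows.
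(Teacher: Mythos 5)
Your proof is correct, and it rests on the same underlying Leibniz-rule bookkeeping as the paper's, but the induction is organized differently. You induct on the total number of derivatives $I_0+J_0$, peeling off one $\dx$ or $\dy$ at a time and observing that a single differentiation turns a product of weight $\sum_m c_m(a_m+b_m)=W$ into a linear combination of products of weight exactly $W+1$ (your re-normalization remark about merging identical factors is the right thing to record, and it does preserve the weight). Starting from the Fa\`a di Bruno products, whose weight is $\sum_m k_m(i_m+j_m)=\ell\leq\mfn$, this gives the bound $\ell+I_0+J_0\leq \mfn+I_0+J_0$ at once. The paper instead proceeds in two stages: it first applies the product rule to distribute the derivatives among the $s$ factors, writing $\dx^{I_0}\dy^{J_0}\bigl(\prod_{m}(\dx^{i_m}\dy^{j_m}P)^{k_m}\bigr)$ as a combination of $\prod_m \dx^{I_m}\dy^{J_m}\bigl[(\dx^{i_m}\dy^{j_m}P)^{k_m}\bigr]$ with $\sum_m I_m=I_0$, $\sum_m J_m=J_0$, and then proves by induction on the exponent $k$ that $\dx^{I}\dy^{J}\bigl[(\dx^{i}\dy^{j}P)^{k}\bigr]$ is a combination of products of weight at most $I+J+k(i+j)$, summing these per-factor bounds at the end. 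Your single induction with the exact ``$+1$ per derivative'' invariant is arguably more streamlined and avoids the nested bookkeeping; the paper's route produces the explicit per-factor estimate $I+J+k(i+j)$, which is slightly more information than is needed for the final inequality. Either argument fully establishes the lemma.
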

\begin{proof}
Thanks to the product rule, the derivative $\dx^{I_0}\dy^{J_0}\left(\prod_{m=1}^s \left( \dx^{i_m}\dy^{j_m} P \right)^{k_m}\right)$ can be expressed as a linear combination of several terms $\prod_{m=1}^s \dx^{I_m}\dy^{J_m}\left[\left( \dx^{i_m}\dy^{j_m} P \right)^{k_m}\right]$, where $\sum_{m=1}^tI_m=I_0$ and $\sum_{m=1}^tJ_m=J_0$.

We can prove by induction on $k$ that $ \dx^{I}\dy^{J}\left[\left( \dx^{i}\dy^{j} P \right)^{k}\right]$ can be expressed, for all $(i,j,I,J)\in\mathbb N^4$, as  a linear combination  of products $\prod_{m=1}^M (\dx^{a_m}\dy^{b_m} P)^{c_m}$ where the indices satisfy $\sum_{m=1}^M c_m(a_m+b_m)\leq I+J+k(i+j)$:
\begin{enumerate}
\item it is evidently true for $k=1$;
\item suppose that it is true for $k_0\geq 1$, then for any $(i,j,I,J)\in\mathbb N^4$ the product rule applied  to $\dx^{i}\dy^{j} P\times \left(\dx^{i}\dy^{j} P \right)^{k_0}$ yields
$$
\dx^{I}\dy^{J}\left[\left( \dx^{i}\dy^{j} P \right)^{k_0+1}\right]
=
\sum_{\tilde i =0}^{I}
\sum_{\tilde j =0}^{J}
\begin{pmatrix}
I\\\tilde i
\end{pmatrix}
\begin{pmatrix}
J\\\tilde j
\end{pmatrix} 
\dx^{i+I-\tilde i}\dy^{j+J-\tilde j} P
\dx^{\tilde i}\dy^{\tilde j} \left[\left( \dx^{i}\dy^{j} P \right)^{k_0}\right],
$$
where by hypothesis each $\dx^{\tilde i}\dy^{\tilde j} \left[\left( \dx^{i}\dy^{j} P \right)^{k_0}\right]$can be expressed as  a linear combination  of products $\prod_{m=1}^M (\dx^{a_m}\dy^{b_m} P)^{c_m}$ with $\sum_{m=1}^M c_m(a_m+b_m)\leq \tilde i+\tilde j+k_0(i+j)$, so that each term in the double sum can be expressed  as  a linear combination  of products $ \prod_{m=1}^{M+1} (\dx^{a_m}\dy^{b_m} P)^{c_m}$
 where $a_{M+1} :=i+I-\tilde i$, $b_{M+1} := j+J-\tilde j$ and $c_{M+1}:=1$, which yields $\sum_{m=1}^{M+1} c_m(a_m+b_m)=\sum_{m=1}^{M} c_m(a_m+b_m)+(i+I-\tilde i+ j+J-\tilde j)$ and therefore 
 $\sum_{m=1}^{M+1} c_m(a_m+b_m)\leq k_0(i+j)+(i+I+ j+J)$. This concludes the proof by induction.
 \end{enumerate}
 Finally the derivative $\dx^{I_0}\dy^{J_0}\left(\prod_{m=1}^s \left( \dx^{i_m}\dy^{j_m} P \right)^{k_m}\right)$ can be expressed as a linear combination of several terms $\prod_{m=1}^s \prod_{\tilde m=1}^M (\dx^{a_{\tilde m}}\dy^{b_{\tilde m}} P)^{c_{\tilde m}}$, with $\sum_{\tilde m=1}^M c_{\tilde m}(a_{\tilde m}+b_{\tilde m})\leq I_m+J_m+k_m(i_m+j_m)$, in other words it can be expressed as a linear combination of several terms $\prod_{m=1}^{Ms} (\dx^{a_{ m}}\dy^{b_{m}} P)^{c_{m}}$, with $\sum_{ m=1}^{Ms} c_{ m}(a_{ m}+b_{m})\leq \sum_{m=1}^s I_m+J_m+k_m(i_m+j_m)=I_0+J_0+\sum_{m=1}^s k_m(i_m+j_m)$. For any $\dx^{I_0}\dy^{J_0}\left(\prod_{m=1}^s \left( \dx^{i_m}\dy^{j_m} P \right)^{k_m}\right)$ coming from $\dx^{I_0}\dy^{J_0}\LN P$, the summation indices from Faa di Bruno's formula satisfy $\sum_{m=1}^s k_m(i_m+j_m)=\ell$, so the products  $\prod_{m=1}^{Ms} (\dx^{a_{ m}}\dy^{b_{m}} P)^{c_{m}}$ are such that $\sum_{ m=1}^{Ms} c_{ m}(a_{ m}+b_{m})\leq I_0+J_0+\mfn$.
\end{proof}

The two following results now turn to $\lambda_{i+\mfn,j}$ computed in Algorithm \ref{algo}.
\begin{prop}\label{prop:ln0}
\defOp
Under Hypothesis \ref{hyp} consider a solution to Problem \eqref{thesyst} constructed thanks to Algorithm \ref{algo}  with all the fixed values $\lambda_{i,j}$ such that $i<\mfn$ and $i+j\neq 1$ set to zero. As an element of  $\mathbb C[\lambda_{1,0},\lambda_{0,1}]$, $\lambda_{\mfn,0}$ is of degree equal to $\mfn$.
\end{prop}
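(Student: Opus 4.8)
The plan is to trace how $\lambda_{\mfn,0}$ emerges from Algorithm \ref{algo} and then read off its degree in $(\lambda_{1,0},\lambda_{0,1})$ from the structural lemmas already established. First I would note that $\lambda_{\mfn,0}$ is produced at the very first level $\mfl=0$, in the iteration $I=0$. Specializing the explicit formula \eqref{eq:ls} to $\mfl=I=0$ gives
$$
\lambda_{\mfn,0}=\frac{1}{\mfn!\,\alpha_{\mfn,0}(x_0,y_0)}\left(\RHS_{0,0}-\sum_{k=0}^{\mfn-1}k!(\mfn-k)!\,\alpha_{k,\mfn-k}(x_0,y_0)\,\lambda_{k,\mfn-k}\right).
$$
Each $\lambda_{k,\mfn-k}$ with $0\le k<\mfn$ satisfies $k<\mfn$ and $k+(\mfn-k)=\mfn\neq1$ (as $\mfn\ge2$), hence under the chosen normalization it is a prescribed value set to zero; therefore $\lambda_{\mfn,0}=\RHS_{0,0}/(\mfn!\,\alpha_{\mfn,0}(x_0,y_0))$ and it remains only to prove that $\RHS_{0,0}$, regarded as an element of $\mathbb C[\lambda_{1,0},\lambda_{0,1}]$, has degree exactly $\mfn$.

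Next I would expand $\RHS_{0,0}$ according to Definition \ref{dfn:\RHS}: it is the sum of $-\sum_{\ell=1}^{\mfn-1}\sum_{k=0}^{\ell}k!(\ell-k)!\,\alpha_{k,\ell-k}(x_0,y_0)\,\lambda_{k,\ell-k}$, of $-\LN P(x_0,y_0)$, and of the constant $-\alpha_{0,0}(x_0,y_0)$. In the first sum every $\lambda_{k,\ell-k}$ has multi-index length $\ell\le\mfn-1<\mfn$, so it is a prescribed value; under the normalization it vanishes unless $(k,\ell-k)\in\{(1,0),(0,1)\}$, so that sum has degree at most $1$ in $(\lambda_{1,0},\lambda_{0,1})$, and $-\alpha_{0,0}(x_0,y_0)$ is a constant. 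Hence the degree of $\RHS_{0,0}$ is governed by that of $\LN P(x_0,y_0)$, and the whole statement reduces to showing $\deg_{(\lambda_{1,0},\lambda_{0,1})}\LN P(x_0,y_0)=\mfn$.

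For that I would combine Lemma \ref{lm:nlterms} and Lemma \ref{lm:LNP}. By Lemma \ref{lm:nlterms} applied to the equation $(I,J)=(0,0)$, the only unknowns entering $\LN P(x_0,y_0)$ have multi-index length $<\mfn$, so after the normalization the only surviving ones are $\lambda_{1,0}$ and $\lambda_{0,1}$ (recall $\lambda_{0,0}=0$ as well). By Lemma \ref{lm:LNP}, $\LN P$ is a linear combination of products $\prod_{m=1}^s(\dx^{i_m}\dy^{j_m}P)^{k_m}$ with at most $\mfn$ factors counting repetitions, the only ones with exactly $\mfn$ factors being $(\dx P)^k(\dy P)^{\mfn-k}$ for $0\le k\le\mfn$. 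Taking the constant coefficient at $(x_0,y_0)$ amounts to replacing each factor $\dx^{i_m}\dy^{j_m}P$ by $i_m!\,j_m!\,\lambda_{i_m,j_m}$: a factor with $i_m+j_m\ne1$ then contributes $0$, and a factor with $i_m+j_m=1$ contributes degree $1$, so every product with fewer than $\mfn$ factors yields a monomial of degree $<\mfn$, whereas $(\dx P)^k(\dy P)^{\mfn-k}$ yields $\lambda_{1,0}^{k}\lambda_{0,1}^{\mfn-k}$, of degree exactly $\mfn$; this already gives $\deg\LN P(x_0,y_0)\le\mfn$. For the reverse inequality I would invoke the observation already recorded after System \eqref{thesyst}, that the $(0,0)$ equation contains the non-zero non-linear term $\alpha_{\mfn,0}(x_0,y_0)\,\lambda_{1,0}^{\mfn}$ (Faa di Bruno with $\mu=\mfn$, $s=1$, $(k_1,(i_1,j_1))=(\mfn,(1,0))$), whose coefficient is nonzero by Hypothesis \ref{hyp}; since $\mfn\ge2$ this monomial cannot be cancelled inside $\RHS_{0,0}$ by the degree-$\le1$ first sum nor by the constant $\alpha_{0,0}(x_0,y_0)$, so $\deg\RHS_{0,0}=\mfn$ and therefore $\deg\lambda_{\mfn,0}=\mfn$. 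I do not expect a genuine obstacle here; the only slightly delicate point is the bookkeeping that no product with strictly fewer than $\mfn$ factors can, after evaluation at $(x_0,y_0)$ and normalization, reach degree $\mfn$, and that is exactly what Lemma \ref{lm:LNP} is tailored to guarantee.
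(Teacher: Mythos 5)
Your proof is correct and follows essentially the same route as the paper's: specialize the algorithmic formula at level $\mfl=0$, use the normalization to kill all prescribed $\lambda$'s except $\lambda_{1,0},\lambda_{0,1}$, bound the degree of $\LN P(x_0,y_0)$ by $\mfn$ via Lemma \ref{lm:LNP} (the paper invokes Corollary \ref{cor:RIJ} where you use Lemma \ref{lm:nlterms}, which is the same content), and obtain the exact degree from the uncancelled term $\alpha_{\mfn,0}(x_0,y_0)\lambda_{1,0}^{\mfn}$ guaranteed by Hypothesis \ref{hyp}. No gaps to report.
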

\begin{proof}
The formula to compute $\lambda_{\mfn,0}$ in Algorithm \ref{algo} comes from the $(I,J)=(0,0)$ equation in System \eqref{thesyst}, that is to say $\LA P(x_0,y_0)=-\alpha_{0,0}(x_0,y_0)$. It reads
$$
\lambda_{\mfn,0}=\frac{1}{\mathsf T^0_{\mfn+1,\mfn+1}}\left(\mathsf B^0_{\mfn+1} - 
\sum_{k=0}^{\mfn-1}\mathsf T^0_{\mfn+1,k+1} \lambda_{k,\mfn-k}\right),
$$
and the sum is actually zero since the $\lambda_{k,\mfn-k}$ unknowns are prescribed to zero for $k<\mfn$. The definitions of $\mathsf B^0$ and $\mathsf L^0$ then give
$$
\lambda_{\mfn,0}=\frac{1}{\mfn!\alpha_{\mfn,0}(x_0,y_0)}\left(- 
\sum_{\ell=0}^{\mfn-1} \sum_{k=0}^{\ell}
k!(\ell-k)! \alpha_{k,\ell-k}(x_0,y_0)\lambda_{k,\ell-k}
-\LN P(x_0,y_0)- \alpha_{0,0}(x_0,y_0)\right).
$$
Since the $\lambda_{k,\ell-k}$ unknowns are prescribed to zero for all $1<\ell <\mfn-1$ and all $k$, the double sum term reduces to $ \alpha_{0,1}(x_0,y_0)\lambda_{0,1}+\alpha_{1,0}(x_0,y_0)\lambda_{1,0}$. The non-linear terms from $\LN P$, namely $\prod_{m=1}^s (\dx^{i_m}\dy^{j_m}P)^{k_m}$, are products of at most $\mfn$ terms, counting repetitions, according to Lemma \ref{lm:LNP}. So $\LN P(x_0,y_0)$ is a linear combination of product terms reading $\prod_{m=1}^s (\lambda_{i_m,j_m})^{k_m}$ with at most $\mfn$ factors.  Moreover, since $P$ is constructed thanks to Algorithm \ref{algo},  from Corollary \ref{cor:RIJ} we know that these $ \lambda_{i_m,j_m}$s have a length of the multi-index at most equal to $\mfn-1$, so they are either $\lambda_{1,0}$ or $\lambda_{0,1}$ or prescribed to zero. This means that in $\mathbb C[\lambda_{1,0},\lambda_{0,1}]$ each one of these $ \lambda_{i_m,j_m}$ is at most of degree one. So in $\mathbb C[\lambda_{1,0},\lambda_{0,1}]$ each $\prod_{m=1}^s (\lambda_{i_m,j_m})^{k_m}$ is a product of at most $\mfn$ factors each of them of degree at most one, the product is therefore of degree at most $\mfn$. As a result
$$
\lambda_{\mfn,0}=\frac{1}{\mfn!\alpha_{\mfn,0}(x_0,y_0)}\left( 
-\alpha_{0,1}(x_0,y_0)\lambda_{0,1}
-\alpha_{1,0}(x_0,y_0)\lambda_{1,0}
-\LN P(x_0,y_0)- \alpha_{0,0}(x_0,y_0)\right)
$$
as an element of $\mathbb C[\lambda_{1,0},\lambda_{0,1}]$ is of degree at most $\mfn$.

Finally, the term $(\dx P)^\mfn$ from $\LN P$ identified in Lemma \ref{lm:LNP} corresponds to a term $\alpha_{\mfn,0}(x_0,y_0)(\lambda_{1,0})^\mfn$ in the expression of $\lambda_{\mfn,0}$, and this term is non-zero under Hypothesis \ref{hyp}. As a conclusion $\lambda_{\mfn,0}$ as an element of $\mathbb C[\lambda_{1,0},\lambda_{0,1}]$ is of degree equal to $\mfn$.
\end{proof}
\begin{prop}
\defOp
Under Hypothesis \ref{hyp} consider a solution to Problem \eqref{thesyst} constructed thanks to Algorithm \ref{algo}  with all the fixed values $\lambda_{i,j}$ such that $i<\mfn$ and $i+j\neq 1$ set to zero. As an element of $\mathbb C[\lambda_{1,0},\lambda_{0,1}]$, each $\lambda_{i+\mfn,j}$ has a total degree at most equal to the length of its multi-index $i+j+\mfn$.
\end{prop}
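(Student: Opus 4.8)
The plan is to prove a slightly stronger statement, better suited to an induction on the length of the multi-index: \emph{for every $(a,b)\in\mathbb N^2$ with $a+b\leq\mfn+q-1$, the unknown $\lambda_{a,b}$, regarded as an element of $\mathbb C[\luoz,\luzo]$, has total degree at most $a+b$.} The announced bound is then the special case $(a,b)=(i+\mfn,j)$. Carrying this sharper form is essential: $\mathbb C[\luoz,\luzo]$ is an integral domain, so $\deg$ is additive on products, and a bound ``$\deg\leq a+b$'' on each factor is exactly what survives a product $\prod_m(\lambda_{a_m,b_m})^{c_m}$ once it is combined with the combinatorial constraint $\sum_m c_m(a_m+b_m)\leq\mfn+\mfl$ that Fa\`a di Bruno supplies, whereas a weaker per-factor bound would not.

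First I would dispose of the base case $a+b<\mfn$: these $\lambda_{a,b}$ are the values fixed at the initialization of Algorithm~\ref{algo}, and under the present normalization each is either $\luoz$ or $\luzo$ (degree $1=a+b$), or $\lambda_{0,0}=0$, or $0$, so the degree is at most $a+b$ in every case. For the inductive step I fix $\mfl$ with $0\leq\mfl\leq q-1$, assume the claim for all unknowns of multi-index length at most $\mfn+\mfl-1$, and treat those of length $\mfn+\mfl$. Those with first index strictly less than $\mfn$ are set to $0$ at the beginning of the corresponding pass of the $\mfl$-loop, so there is nothing to prove; the remaining ones, $\lambda_{I+\mfn,\mfl-I}$ for $0\leq I\leq\mfl$, I would handle by a nested induction on $I$ using the explicit formula \eqref{eq:ls}. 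In \eqref{eq:ls} the leading factor is a constant, while the sum $\sum_{k=0}^{\mfn-1}(\cdots)\lambda_{I+k,\mfn+\mfl-I-k}$ involves, up to constants, only unknowns of multi-index length exactly $\mfn+\mfl$ whose first index is either $<\mfn$ (hence prescribed to $0$) or $\geq\mfn$, in which case they were already computed at the inner-loop index $I+k-\mfn<I$; in either case the degree is at most $\mfn+\mfl$. So everything reduces to bounding $\deg\RHS_{I,\mfl-I}$.

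This is the heart of the argument. By Corollary~\ref{cor:RIJ}, $\RHS_{I,\mfl-I}$ depends only on unknowns of multi-index length at most $\mfn+\mfl-1$, so its linear part --- together with the constant term $\Dop{I}{\mfl-I}\alpha_{0,0}(x_0,y_0)$ --- already has degree at most $\mfn+\mfl-1$ by the induction hypothesis. The delicate contribution is $\Dop{I}{\mfl-I}[\LN P](x_0,y_0)$. Peeling the derivatives off the coefficients $\alpha$ with Leibniz's rule (those derivatives contribute only constants once evaluated at $(x_0,y_0)$) and then invoking Lemma~\ref{lm:IJLNP}, this term is a linear combination of products $\prod_{m=1}^t(\dx^{a_m}\dy^{b_m}P)^{c_m}$ evaluated at $(x_0,y_0)$ with $\sum_{m=1}^t c_m(a_m+b_m)\leq I+(\mfl-I)+\mfn=\mfn+\mfl$; evaluation turns each factor into a constant multiple of $\lambda_{a_m,b_m}$ (or into $0$ when $a_m+b_m>\mfn+q-1$). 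Since Corollary~\ref{cor:RIJ} forces $a_m+b_m\leq\mfn+\mfl-1$ for all these $\lambda_{a_m,b_m}$, the induction hypothesis gives $\deg\lambda_{a_m,b_m}\leq a_m+b_m$, and therefore
\[
\deg\Big(\prod_{m=1}^t(\lambda_{a_m,b_m})^{c_m}\Big)=\sum_{m=1}^t c_m\,\deg\lambda_{a_m,b_m}\leq\sum_{m=1}^t c_m(a_m+b_m)\leq\mfn+\mfl .
\]
Hence $\deg\Dop{I}{\mfl-I}[\LN P](x_0,y_0)\leq\mfn+\mfl$, so $\deg\RHS_{I,\mfl-I}\leq\mfn+\mfl$, and feeding this back into \eqref{eq:ls} gives $\deg\lambda_{I+\mfn,\mfl-I}\leq\mfn+\mfl=(I+\mfn)+(\mfl-I)$, which closes the nested induction on $I$ and the induction on $\mfl$.

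I expect the difficulty to be organizational rather than conceptual: one must keep the two nested inductions cleanly separated (the outer on the multi-index length $\mfn+\mfl$, the inner on the position $I$ in the $\mfl$-loop of Algorithm~\ref{algo}) and, above all, propagate the \emph{sharp} hypothesis ``$\deg\lambda_{a,b}\leq a+b$'' through Lemma~\ref{lm:IJLNP}; replacing it with a uniform bound would break the product estimate above. Everything else is bookkeeping already prepared by Lemma~\ref{lm:IJLNP} and Corollary~\ref{cor:RIJ}, and the case $\mfl=0$, $I=0$ just reproduces the upper bound of Proposition~\ref{prop:ln0}.
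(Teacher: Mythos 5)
Your proposal is correct and follows essentially the same route as the paper's proof: an outer induction on the layer $\mfl$ combined with an inner induction on $I$ through the explicit formula \eqref{eq:ls}, with the linear terms of $\RHS_{I,\mfl-I}$ bounded via the layer structure (Corollary \ref{cor:RIJ}) and the non-linear terms bounded via Lemma \ref{lm:IJLNP} together with additivity of the degree on products. The only differences are presentational: you state explicitly the strengthened hypothesis $\deg\lambda_{a,b}\leq a+b$ for all unknowns (which the paper uses implicitly, since the prescribed values trivially satisfy it) and you re-derive the $\mfl=0$ case rather than citing Proposition \ref{prop:ln0}.
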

\begin{proof}
The formula to compute $\lambda_{I+\mfn,\mfl-I}$ in Algorithm \ref{algo} comes from the $(I,J)=(I,\mfl-I)$ equation in System \eqref{thesyst}, that is to say $\dx^I\dy^{\mfl-I}\LA P(x_0,y_0)=-\dx^I\dy^{\mfl-I}\alpha_{0,0}(x_0,y_0)$. It reads
\begin{equation}\label{eq:lipn}
\begin{array}{rl}
\lambda_{I+\mfn,\mfl-I}&\displaystyle
=\frac{1}{\mathsf T^\mfl_{I+\mfn+1,I+\mfn+1}}\left(\mathsf B^\mfl_{I+\mfn+1} - 
\sum_{k=0}^{\mfn-1}\mathsf T^\mfl_{I+\mfn+1,I+k+1} \lambda_{I+k,\mfn+\mfl-I-k}\right)\\
&\displaystyle
=\frac{I!}{(\mfn+I)!\alpha_{\mfn,0}(x_0,y_0)}\left(\mathsf \RHS_{I,\mfl-I} - 
\sum_{k=0}^{\mfn-1}
\frac{(I+k)!(\mfn-k+\mfl-1)!}{I!(\mfl-I)!} 
\alpha_{k,\mfn-k}(x_0,y_0)
\lambda_{I+k,\mfn+\mfl-I-k}\right).
\end{array}
\end{equation}
We will proceed by induction on $\mfl$:
\begin{enumerate}
\item the result has been proved to be true for $\mfl = 0$ in Proposition \ref{prop:ln0} ;
\item suppose the result is true for $\mfl\in\mathbb N$ as well as for all $\tilde\mfl\in\mathbb N$ such that $\tilde\mfl\leq\mfl$, then all the linear terms in $\RHS_{I,\mfl+1-I}$ have a length of the multi-index at most equal to $\mfn+\mfl$ so by hypothesis their degree as elements of $\mathbb C[\lambda_{1,0},\lambda_{0,1}]$ is at most equal to $\mfn+\mfl$, and thanks to Lemma \ref{lm:IJLNP} all the non-linear terms in $\RHS_{I,\mfl+1-I}$ can be expressed as  a linear combination  of products $\prod_{m=1}^t (\lambda_{a_m,b_m})^{c_m}$ where the indices satisfy $\sum_{m=1}^t c_m(a_m+b_m)\leq \mfl+1+\mfn$ so by hypothesis their degree as elements of $\mathbb C[\lambda_{1,0},\lambda_{0,1}]$ is at most equal to $\mfn+\mfl+1$ ; the last step is to prove that the $\lambda_{I+k,\mfn+\mfl+1-I-k}$ are also of degree at most equal to $\mfn+\mfl+1$, and we will proceed by induction on $I$:
\begin{enumerate}
\item for $I=0$, all  $\lambda_{I+k,\mfn+\mfl+1-I-k}$ for $0\leq k\leq \mfn-1$ satisfy the two conditions $I+k<\mfn$ and $I+k+\mfn+\mfl+1-I-k = \mfn+\mfl+1\neq 1$ so they are all prescribed to zero and their degree as element of $\mathbb C[\lambda_{1,0},\lambda_{0,1}]$ is at most equal to $\mfn+\mfl+1$ that ;
\item suppose that, for a given $I\in \mathbb N$, the $\lambda_{\tilde I+k,\mfn+\mfl+1-\tilde I-k}$  for all $\tilde I\in \mathbb N$ such that $\tilde I\leq I$  are also of degree at most equal to $\mfn+\mfl+1$ then it is clear from Equation \eqref{eq:lipn} that $\lambda_{I+1+\mfn,\mfl-I-1}$ is also of degree at most  equal to $\mfn+\mfl+1$.
\end{enumerate}
This concludes the proof.
\end{enumerate}
\end{proof}
As explained from an algebraic viewpoint in section 3.2 in \cite{LMinterp}, the degree of $\lambda_{i+\mfn,j}$ as an element of $\mathbb C[\lambda_{1,0},\lambda_{0,1}]$ will be affected by the choice of the last two prescribed values, namely $\lambda_{1,0}$ and $\lambda_{0,1}$. Indeed if $\lambda_{1,0}$ and $\lambda_{0,1}$ satisfy a polynomial identity $P_l(\lambda_{1,0},\lambda_{0,1})=0$, then we can consider the quotient ring
$\mathbb C [\lambda_{1,0},\lambda_{0,1}]/(P_l)$.

Note that choosing to set $\{ \lambda_{i,j}, 1< i+j\leq \mfn -1 \}$ to values different from zero may be useful to treat operators that do not satisfy Hypothesis \ref{hyp2} but this is not our goal here.

\subsection{For each GPW}
\label{ssec:each}
In order to obtain a set of linearly independent GPWs, the values of  $\lambda_{1,0}$ and $\lambda_{0,1}$ will be chosen different for each GPW. However  the values of  $\lambda_{1,0}$ and $\lambda_{0,1}$ will satisfy a common property for every GPWs. Very much as the coefficients of any plane wave of wavenumber $\kappa$ satisfy $(\lambda_{1,0})^2+(\lambda_{0,1})^2=-\kappa^2$, independently of the direction of propagation $\theta$ since  $\lambda_{1,0}=\imath\kappa\cos\theta$ and  $\lambda_{0,1}=\imath\kappa\sin\theta$, under Hypothesis \ref{hyp2} the coefficients of each GPW will be chosen for the quantity
$$
\sum_{k=0}^\mfn \alpha_{k,\mfn-k}(x_0,y_0)
(\lambda_{1,0})^k(\lambda_{0,1})^{\mfn-k}
=
\left(\begin{pmatrix}
\lambda_{1,0}\\\lambda_{0,1}
\end{pmatrix}^t
\Gamma
\begin{pmatrix}
\lambda_{1,0}\\\lambda_{0,1}
\end{pmatrix}\right)^{\frac{\mfn}{2}}
$$
to be identical for every GPWs, as we will see in the following proposition and theorem. 

This will be crucial to prove interpolation properties of the corresponding set of functions, which will result from the consequence of this common property on the degree of each $\lambda_{i+\mfn,j}$ as an element of $\mathbb C[\lambda_{1,0},\lambda_{0,1}]$. As the plane wave case suggests, we will see that $\lambda_{i+\mfn,j}$ can be expressed as a polynomial of lower degree thanks to a judicious choice for $\lambda_{1,0}$ and $\lambda_{0,1}$.

We first need an intermediate result concerning the polynomial $\LN P$.
\begin{lmm}\label{lm:Rnorm}
\defOp
Consider a given polynomial $P\in\mathbb C[x,y]$.
For any $\mfl\in\mathbb N$ and any $I\in\mathbb N$ such that  $I\leq\mfl+1$, the quantity $\dx^I\dy^{\mfl+1-I} \left[ \LN P \right]$ can be expressed as a linear combination of products $\prod_{t=1}^\mu \dx^{i_t+I_t}\dy^{j_t+J_t}P $, with $\sum_{t=1}^\mu I_t =I$, $\sum_{t=1}^\mu J_t =\mfl+1-I$, $\sum_{t=1}^\mu i_t =k$, and $\sum_{t=1}^\mu j_t =\ell-k$. Moreover, for each product term, there exists $t_0\in\mathbb N$, $1\leq t_{0}\leq \mu$ such that $I_{t_0}\neq 0$ or $J_{t_0}\neq 0$.
\end{lmm}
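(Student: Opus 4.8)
The plan is to expand $\LN P$ via Faa di Bruno's formula as a linear combination of product terms $\prod_{m=1}^s \left(\dx^{i_m}\dy^{j_m}P\right)^{k_m}$ with $\sum_m k_m(i_m+j_m)=\ell$ for $1\leq\ell\leq\mfn$, and then differentiate each such term $I$ times in $x$ and $\mfl+1-I$ times in $y$ using the generalized Leibniz (product) rule. The key observation, already exploited in Lemma~\ref{lm:IJLNP}, is that each factor $\left(\dx^{i_m}\dy^{j_m}P\right)^{k_m}$ is itself a $k_m$-fold product of copies of $\dx^{i_m}\dy^{j_m}P$, so that $\prod_{m=1}^s \left(\dx^{i_m}\dy^{j_m}P\right)^{k_m}$ is literally a product of $\mu=\sum_m k_m$ factors of the form $\dx^{i_t}\dy^{j_t}P$, where the multi-indices $(i_t,j_t)$ run over the list in which $(i_m,j_m)$ is repeated $k_m$ times. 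Applying the multivariate Leibniz rule to this $\mu$-fold product distributes the $I$ derivatives in $x$ and the $\mfl+1-I$ derivatives in $y$ among the $\mu$ factors: writing $(I_t,J_t)$ for the number of extra $x$- and $y$-derivatives landing on the $t$-th factor, one obtains exactly a linear combination of products $\prod_{t=1}^\mu \dx^{i_t+I_t}\dy^{j_t+J_t}P$ with $\sum_t I_t=I$ and $\sum_t J_t=\mfl+1-I$, and with $\sum_t i_t=\sum_m k_m i_m=k$, $\sum_t j_t=\sum_m k_m j_m=\ell-k$ inherited from the Faa di Bruno constraints. This gives the first, quantitative, part of the statement.

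For the second part, I would argue that in each resulting product at least one factor must actually receive a derivative, i.e. there exists $t_0$ with $I_{t_0}\neq 0$ or $J_{t_0}\neq 0$. This is immediate: since $(I,J)=(I,\mfl+1-I)$ and $\mfl\geq 0$, we have $I+J=\mfl+1\geq 1$, so $(I,J)\neq(0,0)$; hence $\sum_t I_t + \sum_t J_t = \mfl+1\geq 1$, and since the $I_t,J_t$ are non-negative integers, at least one of them is strictly positive. That index $t_0$ is the desired one. (One could note in passing that this is the only place the hypothesis $\ell\geq 1$ underlying $\LN$ is not even needed — the statement holds for any polynomial $P$ and any single product term, because the conclusion is about the action of a nontrivial derivative operator.)

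I expect the only mildly delicate point to be the bookkeeping: making precise that the passage from the Faa di Bruno product $\prod_m(\dx^{i_m}\dy^{j_m}P)^{k_m}$ to a flat $\mu$-fold product, followed by Leibniz, genuinely produces terms of the claimed shape with the claimed index sums, and that no cancellation or reindexing spoils the identities $\sum_t i_t=k$, $\sum_t j_t=\ell-k$. This is routine — it is the same inductive Leibniz computation carried out in the proof of Lemma~\ref{lm:IJLNP}, specialized to the case where the total derivative order is distributed exactly rather than merely bounded — so I would either invoke that lemma's induction directly or reproduce the one-line induction on $k_m$ showing $\dx^{I}\dy^{J}\left[(\dx^i\dy^j P)^{k}\right]$ expands as a linear combination of $\prod_{t=1}^{k}\dx^{i+I_t}\dy^{j+J_t}P$ with $\sum_t I_t=I$, $\sum_t J_t=J$, and then multiply over $m=1,\dots,s$. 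The existence of $t_0$ then follows for free as above, completing the proof.
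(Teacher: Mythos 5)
Your proposal is correct and follows essentially the same route as the paper's proof: flatten the Faa di Bruno products $\prod_m(\dx^{i_m}\dy^{j_m}P)^{k_m}$ into $\mu$-fold products by repeating factors according to the multiplicities $k_m$, apply Leibniz's rule to distribute the $I$ derivatives in $x$ and $\mfl+1-I$ derivatives in $y$ among the $\mu$ factors, and conclude the existence of $t_0$ from $\sum_t I_t+\sum_t J_t=\mfl+1>0$. The paper argues the last point by the same contradiction (all $I_t=J_t=0$ would force $I=0$ and $\mfl+1-I=0$), so there is nothing to add.
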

\begin{proof}
The quantity $\LN P$ can be expressed, from Faa di Bruno's formula, as a linear combination of products $\prod_{m=1}^s\left( \dx^{i_m}\dy^{j_m}P \right)^{k_m}$, with  $(i_{m_1},j_{m_1})\neq (i_{m_2},j_{m_2})$ for all $m_1\neq m_2$, $\sum_{m=1}^sk_m = \mu$, $\sum_{m=1}^sk_mi_m =k$, and $\sum_{m=1}^sk_mj_m =\ell-k$. Therefore $\LN P$ can also be expressed, repeating terms, as a linear combination of products $\prod_{t=1}^\mu \dx^{i_t}\dy^{j_t}P $, with possibly $(i_{m_1},j_{m_1})= (i_{m_2},j_{m_2})$ for $m_1\neq m_2$, $\sum_{t=1}^\mu i_t =k$, and $\sum_{t=1}^\mu j_t =\ell-k$. So the quantity $\dx^I\dy^{\mfl+1-I} \left[ \LN P\right]$ can be expressed, from Leibniz's rule, as a linear combination of products $\prod_{t=1}^\mu \dx^{i_t+I_t}\dy^{j_t+J_t}P $, with $\sum_{t=1}^\mu I_t =I$ and $\sum_{t=1}^\mu J_t =\mfl+1-I$.

Consider such a given product term $\prod_{t=1}^\mu \dx^{i_t+I_t}\dy^{j_t+J_t}P $, and suppose that for all $t$ $I_t=J_t=0$. Then $I=\sum_{t=1}^\mu I_t =0$ and $\mfl+1-I=\sum_{t=1}^\mu J_t =0$, which is impossible since $\mfl+1>0$.
\end{proof}

The two following results gather the consequences of this choice on $\lambda_{i+\mfn,j}$s computed in Algorithm \ref{algo}.
\begin{prop}\label{prop:ln0norm}
\defOp
Under Hypotheses \ref{hyp} and \ref{hyp2} consider a solution to Problem \eqref{thesyst} constructed thanks to Algorithm \ref{algo}  with all the prescribed values $\lambda_{i,j}$ such that $i<\mfn$ and $i+j\neq 1$ set to zero, and 
\begin{equation}\label{norm:oz}
\begin{pmatrix}\lambda_{1,0}\\\lambda_{0,1}\end{pmatrix}
=
\cpxi\kappa A^{-1}D^{-1/2}
\begin{pmatrix}\cos\theta\\\sin\theta\end{pmatrix}
\end{equation} 
for some $\theta\in\mathbb R$ and $\kappa\in\mathbb C^*$.
As an element of  $\mathbb C[\lambda_{1,0},\lambda_{0,1}]$, $\lambda_{\mfn,0}$ can be expressed as a polynomial of degree at most equal to $\mfn-1$, and its coefficients are independent of $\theta$.
\end{prop}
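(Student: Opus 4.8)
The strategy is to isolate the homogeneous top-degree part of $\lambda_{\mfn,0}$ viewed as an element of $\mathbb C[\luoz,\luzo]$, and to show that under the normalization \eqref{norm:oz} Hypothesis \ref{hyp2} forces that part to be a constant. I would start from the closed form obtained in the proof of Proposition \ref{prop:ln0}, valid when all prescribed $\lambda_{i,j}$ with $i<\mfn$ and $i+j\neq 1$ are set to zero:
\[
\lambda_{\mfn,0}=\frac{1}{\mfn!\,\alpha_{\mfn,0}(x_0,y_0)}\Bigl(-\alpha_{0,1}(x_0,y_0)\luzo-\alpha_{1,0}(x_0,y_0)\luoz-\LN P(x_0,y_0)-\alpha_{0,0}(x_0,y_0)\Bigr).
\]
This element of $\mathbb C[\luoz,\luzo]$ has degree $\mfn$; write $\lambda_{\mfn,0}=R_{\mfn}+R$ with $R_{\mfn}$ its homogeneous component of degree $\mfn$ and $\deg R\le\mfn-1$. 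Since $\mfn\ge 2$, only the summand $-\LN P(x_0,y_0)/(\mfn!\,\alpha_{\mfn,0}(x_0,y_0))$ can contribute to $R_{\mfn}$.

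Next I would identify $R_{\mfn}$ explicitly. By Lemma \ref{lm:LNP}, among the products of derivatives of $P$ occurring in $\LN P$ the only ones with exactly $\mfn$ factors, counted with multiplicity, are $(\dx P)^k(\dy P)^{\mfn-k}$ for $0\le k\le\mfn$, and a direct computation with Faa di Bruno's formula (generalizing the treatment of $(\dx P)^{\mfn}$ in the proof of Proposition \ref{prop:ln0}) shows that $(\dx P)^k(\dy P)^{\mfn-k}$ occurs there with coefficient $\alpha_{k,\mfn-k}$. Every remaining product has at most $\mfn-1$ factors, and since, under the prescription, every factor $\dx^{i_m}\dy^{j_m}P$ evaluated at $(x_0,y_0)$ is one of $\luoz$, $\luzo$, or $0$ (by Lemma \ref{lm:nlterms} the unknowns entering these non-linear terms have multi-index length at most $\mfn-1$, while the prescription kills every $\lambda_{i,j}$ of multi-index length in $\{2,\dots,\mfn-1\}$), each such product evaluates to a monomial of degree at most $\mfn-1$ in $\luoz,\luzo$. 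Hence, using $\dx P(x_0,y_0)=\luoz$ and $\dy P(x_0,y_0)=\luzo$ and then Hypothesis \ref{hyp2} (recall $\mfn$ is even),
\[
R_{\mfn}=\frac{-1}{\mfn!\,\alpha_{\mfn,0}(x_0,y_0)}\sum_{k=0}^{\mfn}\alpha_{k,\mfn-k}(x_0,y_0)\,\luoz^{k}\luzo^{\mfn-k}=\frac{-1}{\mfn!\,\alpha_{\mfn,0}(x_0,y_0)}\left(\begin{pmatrix}\luoz\\\luzo\end{pmatrix}^{\!t}\Gamma\begin{pmatrix}\luoz\\\luzo\end{pmatrix}\right)^{\!\mfn/2}.
\]

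The decisive step is the matrix computation with \eqref{norm:oz}. Writing $\Gamma=A^{t}DA$ and using that $D^{-1/2}$ is diagonal (so symmetric and commuting with $D$) and $(A^{-1})^{t}=(A^{t})^{-1}$, one gets
\[
\begin{pmatrix}\luoz\\\luzo\end{pmatrix}^{\!t}\Gamma\begin{pmatrix}\luoz\\\luzo\end{pmatrix}=(\cpxi\kappa)^{2}\begin{pmatrix}\cos\theta\\\sin\theta\end{pmatrix}^{\!t}\bigl(D^{-1/2}(A^{t})^{-1}A^{t}DA\,A^{-1}D^{-1/2}\bigr)\begin{pmatrix}\cos\theta\\\sin\theta\end{pmatrix}=-\kappa^{2},
\]
because $D^{-1/2}(A^{t})^{-1}A^{t}DA\,A^{-1}D^{-1/2}=D^{-1/2}DD^{-1/2}$ is the identity and $\cos^{2}\theta+\sin^{2}\theta=1$; in particular it is independent of $\theta$. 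Consequently $R_{\mfn}$ is the constant $-(-\kappa^{2})^{\mfn/2}/\bigl(\mfn!\,\alpha_{\mfn,0}(x_0,y_0)\bigr)$, so
\[
\lambda_{\mfn,0}=R-\frac{(-\kappa^{2})^{\mfn/2}}{\mfn!\,\alpha_{\mfn,0}(x_0,y_0)}
\]
is a polynomial in $\luoz,\luzo$ of degree at most $\mfn-1$, whose coefficients are assembled solely from the numbers $\alpha_{k,l}(x_0,y_0)$ and from $\kappa$, hence are independent of $\theta$; this is the claim.

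I expect the only genuinely delicate point to be the second paragraph, namely pinning down that the degree-$\mfn$ component of $\LN P(x_0,y_0)$ equals $\sum_{k=0}^{\mfn}\alpha_{k,\mfn-k}(x_0,y_0)\,\luoz^{k}\luzo^{\mfn-k}$: this combines Lemma \ref{lm:LNP} with the vanishing, under the prescription, of every $\lambda_{i,j}$ of multi-index length in $\{2,\dots,\mfn-1\}$, together with the routine but slightly fiddly accounting of the Faa di Bruno coefficients. The matrix identity and the final degree count are straightforward.
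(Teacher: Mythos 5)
Your proposal is correct and follows essentially the same route as the paper's proof: it starts from the same closed-form expression for $\lambda_{\mfn,0}$ obtained in Proposition \ref{prop:ln0}, uses Lemma \ref{lm:LNP} together with the prescription of zeros to see that the only degree-$\mfn$ contribution is $\sum_{k}\alpha_{k,\mfn-k}(x_0,y_0)\luoz^{k}\luzo^{\mfn-k}$, and then invokes Hypothesis \ref{hyp2} with the normalization \eqref{norm:oz} to reduce this top part to a $\theta$-independent constant $(-\kappa^{2})^{\mfn/2}$. Your explicit split into the homogeneous degree-$\mfn$ component plus a remainder, and the spelled-out Faa di Bruno coefficient accounting, only make explicit what the paper states more briefly.
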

Note that once we impose this condition on $\lambda_{1,0},\lambda_{0,1}$ any element of $\mathbb C[\lambda_{1,0},\lambda_{0,1}]$ can be expressed by different polynomials, possibly with different degrees, simply because under Hypothesis \ref{hyp2} and \eqref{norm:oz} we have
$$
\sum_{k=0}^\mfn \alpha_{k,\mfn-k} (x_0,y_0)\lambda_{1,0}^k\lambda_{0,1}^{\mfn-k}
=
\left( -\kappa^2\right)^{\frac{\mfn}2}.
$$
 See paragraph 3.2 in \cite{LMinterp} for an algebraic view point on this comment.
\begin{proof}
Since
\begin{equation}\label{eq:ln0}
\lambda_{\mfn,0}=\frac{1}{\mfn!\alpha_{\mfn,0}(x_0,y_0)}\left( 
-\alpha_{0,1}(x_0,y_0)\lambda_{0,1}
-\alpha_{1,0}(x_0,y_0)\lambda_{1,0}
-\LN P(x_0,y_0)- \alpha_{0,0}(x_0,y_0)\right),
\end{equation}
again the term to investigate is $\LN P(x_0,y_0)$. Lemma \ref{lm:LNP} identifies products of $\mfn$ terms in $\LN P$, and from the definition of  $\LN$ they appear in the following linear combination
$$
\sum_{k=0}^\mfn k!(\mfn-k)! \alpha_{k,\mfn-k} \frac{(\dx P)^k}{k!}\frac{(\dy P)^{\mfn-k}}{(\mfn-k)!}
=
\sum_{k=0}^\mfn \alpha_{k,\mfn-k} (\dx P)^k(\dy P)^{\mfn-k}.
$$
Back to the expression of $\lambda_{\mfn,0}$, and thanks to Hypothesis \ref{hyp2}, the only possible terms of degree $\mfn$ therefore appear in the following linear combination:
$$
\begin{array}{rl}
\displaystyle
\sum_{k=0}^\mfn \alpha_{k,\mfn-k}(x_0,y_0) (\lambda_{1,0})^k(\lambda_{0,1})^{\mfn-k}
&=\left((\luoz\ \luzo)\Gamma  \begin{pmatrix}\lambda_{1,0}\\\lambda_{0,1}\end{pmatrix}\right)^{\frac{\mfn}{2}}
=\left((\cpxi \kappa)^2(\luoz\ \luzo)A^tD  A \begin{pmatrix}\lambda_{1,0}\\\lambda_{0,1}\end{pmatrix}\right)^{\frac{\mfn}{2}}\\
&=\left(-\kappa^2(\cos\theta\ \sin\theta) \begin{pmatrix}\cos\theta\\\sin\theta\end{pmatrix}\right)^{\frac{\mfn}{2}}
=(-\kappa)^\mfn
\end{array}
$$
Finally thanks to \eqref{norm:oz}, the only terms of degree $\mfn$ in \eqref{eq:ln0} can be expressed as a polynomial of degree at most equal $\mfn-1$.
\end{proof}
\begin{prop}\label{prop:linjnorm}
\defOp
Under Hypotheses \ref{hyp} and \ref{hyp2} consider a solution to Problem \eqref{thesyst} constructed thanks to Algorithm \ref{algo}  with all the fixed values $\lambda_{i,j}$ such that $i<\mfn$ and $i+j\neq 1$ set to zero, and 
$$
\begin{pmatrix}\lambda_{1,0}\\\lambda_{0,1}\end{pmatrix}
=
\cpxi \kappa A^{-1}D^{-1/2}
\begin{pmatrix}\cos\theta\\\sin\theta\end{pmatrix}
$$
for some $\theta\in\mathbb R$ and $\kappa\in\mathbb C^*$.
 As an element of $\mathbb C[\lambda_{1,0},\lambda_{0,1}]$, each $\lambda_{i+\mfn,j}$ can be expressed as a polynomial of degree at most equal to  $i+j+\mfn-1$, and its coefficients are independent of $\theta$.
\end{prop}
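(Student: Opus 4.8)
The plan is to imitate the proof of the (unnormalized) proposition just above, which bounded the degree of $\lambda_{i+\mfn,j}$ in $\mathbb{C}[\lambda_{1,0},\lambda_{0,1}]$ by $i+j+\mfn$, but to gain one unit of degree and simultaneously carry along the $\theta$-independence of the coefficients. I would run a double induction: an outer induction on the layer $\mfl$ (so an unknown of layer $\mfl$ has multi-index length $\mfn+\mfl$), whose base case $\mfl=0$ is exactly Proposition~\ref{prop:ln0norm}; and, inside the passage from $\mfl$ to $\mfl+1$, an inner induction on $I$ from $0$ to $\mfl+1$ following the forward-substitution order of Algorithm~\ref{algo}, i.e.\ the explicit formula~\eqref{eq:ls} for $\lambda_{I+\mfn,\mfl+1-I}$. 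The inductive hypothesis is the full statement: every unknown of layer $\leq\mfl$ has a polynomial expression in $\lambda_{1,0},\lambda_{0,1}$ of degree at most its multi-index length minus one, with coefficients independent of $\theta$.

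The core of the step is to show that each right-hand side $\RHS_{I,\mfl+1-I}$ has degree $\leq\mfn+\mfl$ in $\mathbb{C}[\lambda_{1,0},\lambda_{0,1}]$ with $\theta$-independent coefficients. For the linear terms and the constant term $\Dop{I}{J}\alpha_{0,0}(x_0,y_0)$ this is immediate from Lemma~\ref{lm:lterms}: every unknown there has multi-index length $\leq\mfn+\mfl$, hence is $\lambda_{1,0}$, $\lambda_{0,1}$, prescribed to zero, or a lower-layer unknown covered by the outer hypothesis. For the nonlinear part $\Dop{I}{J}[\LN P](x_0,y_0)$ I would apply Lemma~\ref{lm:Rnorm} to write it as a linear combination, with $\theta$-independent coefficients, of products $\prod_{t=1}^{\mu}\lambda_{i_t+I_t,\,j_t+J_t}$ in which $\mu\geq 2$, each $i_t+j_t\geq 1$, the multi-index lengths of the factors sum to $\ell+\mfl+1\leq\mfn+\mfl+1$, and --- the decisive point --- at least one pair $(I_{t_0},J_{t_0})\neq(0,0)$. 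Such a product vanishes unless every factor has length $1$ or length $\geq\mfn$; and the clause $(I_{t_0},J_{t_0})\neq(0,0)$ rules out a product of $\mfn$ length-one factors, which is exactly the $(\dx P)^k(\dy P)^{\mfn-k}$-type term (cf.\ Lemma~\ref{lm:LNP}) whose degree drops only through the normalization relation, used in the base case. Hence at least one factor has length in $[\mfn,\mfn+\mfl]$, so it is a strictly-lower-layer unknown of degree at most its length minus one; summing the degrees of the factors and using that their lengths total $\ell+\mfl+1$ yields total degree $\leq\mfn+\mfl$. Plugging this into~\eqref{eq:ls}, whose correction sum involves only prescribed-zero unknowns or same-layer unknowns $\lambda_{\tilde I+\mfn,\mfl+1-\tilde I}$ with $\tilde I<I$ handled by the inner hypothesis, and using $\alpha_{\mfn,0}(x_0,y_0)\neq 0$ from Hypothesis~\ref{hyp}, closes both inductions.

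I expect the difficulty to be bookkeeping rather than conceptual. Two points need care. First, the length-one unknowns $\lambda_{1,0}$ and $\lambda_{0,1}$ are exceptional: they have degree $1$, not their length minus one, which is why in counting the degree of a product one must genuinely split the factors into ``length $1$'' and ``length $\geq\mfn$'', and must know that the pathological all-length-one product cannot occur --- this is precisely what the last clause of Lemma~\ref{lm:Rnorm} delivers. Second, one has to verify that the normalization relation $\sum_{k}\alpha_{k,\mfn-k}(x_0,y_0)\lambda_{1,0}^k\lambda_{0,1}^{\mfn-k}=(-\kappa^2)^{\mfn/2}$ is invoked only at $\mfl=0$ (through Proposition~\ref{prop:ln0norm}); at higher layers the saved degree is inherited from the lower-layer bounds and never re-derived by substituting that relation, which is what keeps the coefficients $\theta$-independent. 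The remaining steps --- that a sum or product of polynomials with $\theta$-independent coefficients still has $\theta$-independent coefficients, and the elementary arithmetic of the length sums --- are routine.
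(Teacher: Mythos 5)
Your proposal is correct and follows essentially the same route as the paper's proof: an outer induction on the layer $\mfl$ with base case Proposition \ref{prop:ln0norm}, an inner induction on $I$ along the forward substitution of Algorithm \ref{algo}, the degree bound on the nonlinear part of $\RHS_{I,\mfl+1-I}$ obtained from Lemma \ref{lm:Rnorm} via its clause that some $(I_{t_0},J_{t_0})\neq(0,0)$, and $\theta$-independence following from the fact that all combination coefficients are derivatives of $\alpha$ at $(x_0,y_0)$. Your explicit bookkeeping of the prescribed-zero factors and of where the normalization relation enters (only at $\mfl=0$, through the base case) simply spells out what the paper leaves implicit.
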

\begin{proof}
From Algorithm \ref{algo} the expression of $\lambda_{I+\mfn,\mfl-I}$ reads
\begin{equation}\label{eq:lipnnorm}
\begin{array}{rl}
\lambda_{I+\mfn,\mfl-I}&\displaystyle
=\frac{1}{\mathsf T^\mfl_{I+\mfn+1,I+\mfn+1}}\left(\mathsf B^\mfl_{I+\mfn+1} - 
\sum_{k=0}^{\mfn-1}\mathsf T^\mfl_{I+\mfn+1,I+k+1} \lambda_{I+k,\mfn+\mfl-I-k}\right)\\
&\displaystyle
=\frac{I!}{(\mfn+I)!\alpha_{\mfn,0}(x_0,y_0)}\left(\mathsf \RHS_{I,\mfl-I} - 
\sum_{k=0}^{\mfn-1}
\frac{(I+k)!(\mfn-k+\mfl-1)!}{I!(\mfl-I)!} 
\alpha_{k,\mfn-k}(x_0,y_0)
\lambda_{I+k,\mfn+\mfl-I-k}\right).
\end{array}
\end{equation}
We will proceed again by induction on $\mfl$:
\begin{enumerate}
\item the result has been proved to be true for $\mfl = 0$ in Proposition \ref{prop:ln0norm} ;
\item suppose the result is true for $\mfl\in\mathbb N$ as well as for all $\tilde\mfl\in\mathbb N$ such that $\tilde\mfl\leq\mfl$, then we focus on $\RHS_{I,\mfl+1-I}$, given by
\begin{align*}
\begin{split}
\RHS_{0,\mfl+1}
&=  \sum_{k=0}^{\mfn} 
\sum_{\tilde j=0}^{\mfl}
 \left(k+\tilde i\right)!\frac{\left({\mfn}-k+\tilde j\right)!}{\tilde j !}
 \Dop{0}{\mfl+1-\tilde j}\alpha_{k,{\mfn}-k} (x_0,y_0) \lambda_{k,{\mfn}-k+\tilde j}\\
&\phantom =+\sum_{\ell = 1}^{\mfn-1} \sum_{k=0}^\ell 
\sum_{\tilde j=0}^{\mfl+1}
 \left(k\right)!\frac{\left({\ell}-k+\tilde j\right)!}{\tilde j !}
 \Dop{0}{\mfl+1-\tilde j}\alpha_{k,\ell-k} (x_0,y_0)  \lambda_{k,{\ell}-k+\tilde j} 
\\
&\phantom = - \Dop{0}{\mfl+1}\left[ \LN P \right](x_0,y_0) - \Dop{0}{\mfl+1}\alpha_{0,0}(x_0,y_0) \text{ for } I=0  \text{ ; and } 
\end{split}
\end{align*}
\begin{align*}
\begin{split}
&\RHS_{I,\mfl+1-I}
\\&=- \sum_{k=0}^{\mfn} 
\sum_{\tilde i=0}^{I-1}\sum_{\tilde j=0}^{\mfl-I}
 \frac{\left(k+\tilde i\right)!\left({\mfn}-k+\tilde j\right)!}{\tilde i!\tilde j!}
\Dop{I-\tilde i}{\mfl+1-I-\tilde j}\alpha_{k,{\mfn}-k} (x_0,y_0) \lambda_{k+\tilde i,{\mfn}-k+\tilde j}\\
&\phantom =-\sum_{\ell = 1}^{\mfn-1} \sum_{k=0}^\ell 
\sum_{\tilde i=0}^{I}\sum_{\tilde j=0}^{\mfl+1-I}
\frac{ \left(k+\tilde i\right)!\left({\ell}-k+\tilde j\right)!}{\tilde i!\tilde j!}
\Dop{I-\tilde i}{\mfl+1-I-\tilde j}\alpha_{k,\ell-k} (x_0,y_0)  \lambda_{k+\tilde i,{\ell}-k+\tilde j}
\\
&\phantom = -\Dop{I}{\mfl+1-I}\left[ \LN P \right](x_0,y_0) - \Dop{I}{\mfl+1-I}\alpha_{0,0}(x_0,y_0) \text{ otherwise ;}
\end{split}
\end{align*}

all the linear terms in $N_{I,\mfl+1-I}$, as elements of $\mathbb C[\lambda_{1,0},\lambda_{0,1}]$, by hypothesis have degree at most equal to $(I+\mfn)+(\mfl+1-I)-1=\mfn+\mfl$, and thanks to Lemma \ref{lm:Rnorm} all the non-linear terms in $N_{I,\mfl+1-I}$ can be expressed as  a linear combination  of products $\prod_{t=1}^\mu \lambda_{a_t,b_t}$ where the indices satisfy $\sum_{t=1}^\mu (a_t+b_t)\leq \mfl+1+\mfn$ ; in each such product, as element of $\mathbb C[\lambda_{1,0},\lambda_{0,1}]$, each $\lambda_{a_t,b_t}$ is either of degree $a_t+b_t=1$ if $(a_t,b_t)\in\{(0,1),(1,0)\}$, or of degree at most equal to $a_t+b_t-1$ otherwise by hypothesis ; from Lemma \ref{lm:Rnorm} there is at least one $t_0$ such that $(a_{t_0},b_{t_0})\notin\{(0,1),(1,0)\}$, therefore each product $\prod_{t=1}^\mu \lambda_{a_t,b_t}$, as element of $\mathbb C[\lambda_{1,0},\lambda_{0,1}]$, can be expressed as a polynomial of degree at most $\left(\sum_{t=1}^\mu (a_t+b_t)\right) -1\leq \mfl+\mfn$ ; so all terms in $N_{I,\mfl+1-I}$, as elements of $\mathbb C[\lambda_{1,0},\lambda_{0,1}]$, have degree at most equal to $\mfn+\mfl$ ; the last step is to prove that the $\lambda_{I+k,\mfn+\mfl+1-I-k}$ are also of degree at most equal to $\mfn+\mfl$, and we will proceed by induction on $I$:
\begin{enumerate}
\item for $I=0$, all  $\lambda_{I+k,\mfn+\mfl+1-I-k}$ for $0\leq k\leq \mfn-1$ satisfy the two conditions $I+k<\mfn$ and $I+k+\mfn+\mfl+1-I-k = \mfn+\mfl+1\neq 1$ so they are all prescribed to zero and their degree as element of $\mathbb C[\lambda_{1,0},\lambda_{0,1}]$ is at most equal to $\mfn+\mfl$ that ;
\item suppose that, for a given $I\in \mathbb N$, the $\lambda_{\tilde I+k,\mfn+\mfl+1-\tilde I-k}$  for all $\tilde I\in \mathbb N$ such that $\tilde I\leq I$  are also of degree at most equal to $\mfn+\mfl$ then it is clear from Equation \eqref{eq:lipnnorm} that $\lambda_{I+1+\mfn,\mfl-I-1}$ is also of degree at most  equal to $\mfn+\mfl$.
\end{enumerate}
This concludes the proof.
\end{enumerate}
\end{proof}

Finally, since we are interested in the local approximation properties of GPWs, it is natural to study their Taylor expansion coefficients, and how they can be expressed as elements of $\mathbb C[\lambda_{1,0},\lambda_{0,1}]$. In particular we will find what is the link between the Taylor expansion coefficients of a GPW, $ \dx^i\dy^j \varphi \left(x_0,y_0\right)/(i!j!)$, and that of the corresponding PW, $(\luzo)^j (\luoz)^i/(i!j!)$.

\begin{prop}\label{prop:derphi}
\defOp
Under Hypotheses \ref{hyp} and \ref{hyp2} consider a solution to Problem \eqref{thesyst} constructed thanks to Algorithm \ref{algo}  with all the fixed values $\lambda_{i,j}$ such that $i<\mfn$ and $i+j\neq 1$ set to zero, and 
$$
\begin{pmatrix}\lambda_{1,0}\\\lambda_{0,1}\end{pmatrix}
=
\cpxi \kappa A^{-1}D^{-1/2}
\begin{pmatrix}\cos\theta\\\sin\theta\end{pmatrix},
$$ 
for some $\theta\in\mathbb R$ and $\kappa\in\mathbb C^*$, and the corresponding $\displaystyle \varphi(x,y)=\exp \sum_{ 0\leq i+j\leq q+1} \luij (x-x_0)^i (y-y_0)^j$.
 Then for all $(i,j) \in \mathbb N^2$ such that $i+j\leq q+1$ the difference
\begin{equation}\label{eq:derphi}
R_{i,j} : =  \dx^i\dy^j \varphi \left(x_0,y_0\right)
 - (\luzo)^j (\luoz)^i
\end{equation}
can be expressed as an element of  $\mathbb C[\lambda_{1,0},\lambda_{0,1}]$ such that
\begin{itemize}
\item[$\bullet$] its total degree satisfies ${\rm d} R_{i,j} \leq i+j-1$,
\item[$\bullet$] its coefficients only depend on $i$, $j$, and on the derivatives of the PDE coefficients $\alpha$ evaluated at  $(x_0,y_0)$ but do not depend on $\theta$.
\end{itemize}
\end{prop}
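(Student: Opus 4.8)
The plan is to read $\partial_x^i\partial_y^j\varphi(x_0,y_0)$ off the Faà di Bruno formula of Subsection~\ref{ssec:TE2s}, applied this time with the operator $\partial_x^i\partial_y^j$, to isolate its leading contribution $(\luzo)^j(\luoz)^i$, and to estimate what remains using the degree bounds of Propositions~\ref{prop:ln0norm} and~\ref{prop:linjnorm}. Since the prescribed value $\lambda_{0,0}$ is set to zero, $\varphi(x_0,y_0)=e^{P(x_0,y_0)}=1$, so the formula reads
\[
\partial_x^i\partial_y^j\varphi(x_0,y_0)=\sum c\,\prod_{m}\big(\Dop{i_m}{j_m}P(x_0,y_0)\big)^{k_m}=\sum c\,\prod_m(\lambda_{i_m,j_m})^{k_m},
\]
the sum ranging over the tuples $(k_1,\dots,k_s;(i_1,j_1),\dots,(i_s,j_s))$ arising in the formula, each satisfying $\sum_m k_m(i_m+j_m)=i+j$, and each constant $c=i!\,j!\prod_m 1/k_m!$ being determined by $i$, $j$ and the tuple, hence independent of $\theta$.

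First I would isolate the leading term. The term with $\sum_m k_m=i+j$ is the unique one forcing $i_m+j_m=1$ for all $m$, and it contributes exactly $\big(\partial_x P(x_0,y_0)\big)^i\big(\partial_y P(x_0,y_0)\big)^j=(\luoz)^i(\luzo)^j$; hence $R_{i,j}$ is the sum of the remaining terms, those with $\sum_m k_m<i+j$. For each such term $\sum_m k_m(i_m+j_m-1)=(i+j)-\sum_m k_m\ge1$, so it carries at least one factor $\lambda_{i_{m_0},j_{m_0}}$ with $i_{m_0}+j_{m_0}\ge2$. Next I would discard the factors killed by the normalization: a factor $\lambda_{i_m,j_m}$ with $i_m+j_m\ge2$ and $i_m<\mfn$ belongs to the unknowns prescribed to zero (first index $<\mfn$, index-sum $\ne1$), so its term vanishes. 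Thus every surviving term of $R_{i,j}$ is a product of factors that are each either $\luoz$ or $\luzo$, or else a $\lambda_{i_m,j_m}$ with $i_m\ge\mfn$, in which case Propositions~\ref{prop:ln0norm}--\ref{prop:linjnorm} express it as an element of $\mathbb C[\luoz,\luzo]$ of degree at most $i_m+j_m-1$ whose coefficients are independent of $\theta$ and depend only on $i_m,j_m$ and the derivatives of $\alpha$ at $(x_0,y_0)$.

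The degree count then closes the argument. In a surviving term, let $p$ and $r$ be the multiplicities of the factors $\luoz$ and $\luzo$, and let the remaining factors carry multiplicities $k_m$ and index-sums $s_m=i_m+j_m\ge2$, so that $p+r+\sum_m k_m s_m=i+j$ and, by the previous step, $\sum_m k_m\ge1$. Its total degree in $\mathbb C[\luoz,\luzo]$ is then at most $p+r+\sum_m k_m(s_m-1)=(i+j)-\sum_m k_m\le i+j-1$. Summing over the finitely many surviving terms, and observing that the outer constants $c$ and the coefficients of the inner polynomials $\lambda_{i_m,j_m}$ are all independent of $\theta$ and depend only on $i$, $j$ and the derivatives of $\alpha$ at $(x_0,y_0)$, yields the two asserted properties of $R_{i,j}$; the cases $i+j\le1$ are immediate since then $\varphi(x_0,y_0)=1$ forces $R_{i,j}=0$.

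I expect the main obstacle to be organizational rather than conceptual: controlling the Faà di Bruno partition bookkeeping and verifying that ``first index $<\mfn$ together with index-sum $\ge2$'' is precisely the condition making a $\lambda$-factor vanish under the normalization of Subsection~\ref{ssec:every}. The only genuine inequality used is the strict bound $\sum_m k_m\ge1$ on the non-leading terms, which is what sharpens the additive degree estimate to the required ``$-1$''; and one must check that no $\theta$-dependence re-enters through the coefficients of the inner polynomials $\lambda_{i_m,j_m}$, which is exactly the content of Proposition~\ref{prop:linjnorm}.
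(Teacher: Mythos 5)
Your proposal is correct and follows essentially the same route as the paper: apply the bivariate Faà di Bruno formula to $\varphi=e^P$ at $(x_0,y_0)$, single out the partition $i(1,0)+j(0,1)$ as the leading term $(\luoz)^i(\luzo)^j$, and bound the degree of every other partition's product using the normalization (prescribed $\lambda_{i,j}$ with $i<\mfn$, $i+j\neq 1$ set to zero) together with Propositions \ref{prop:ln0norm} and \ref{prop:linjnorm}, the key point in both arguments being that any non-leading partition contains a factor with $i_m+j_m>1$, which lowers the total degree to at most $i+j-1$. The only cosmetic difference is that you discard outright the products containing a prescribed-zero factor, whereas the paper keeps them in the degree count \eqref{eq:sumdeg} with a zero contribution; the conclusion and the $\theta$-independence of the coefficients are obtained identically.
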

\begin{proof} 
Applying the chain rule introduced in Appendix \ref{app:bivFDB} to the GPW $\varphi$ one gets for all $ (i,j) \in \mathbb N^2$,
\begin{equation*}{
\dx^i\dy^j \varphi \left(x_0,y_0\right)
 = i!j!\sum_{\mu=1}^{i+j} \sum_{s=1}^{i+j} \sum_{p_s((i,j),\mu)} \prod_{l=1}^s \frac{(\lu{i_l}{j_l})^{k_l}}{k_l!},
}
\end{equation*}
where $p_s((i,j),\mu)$ is the set of partitions of $(i,j)$ with length $\mu$:
\begin{equation*}{
 \left\{ (k_l,(i_l,j_l))_{l\in [\![ 1,s ]\!]}:k_l\in\mathbb N^*, 0\prec (i_1,j_1)\prec \dots \prec(i_l,j_l), \sum_{l=1}^s k_l = \mu,  \sum_{l=1}^s k_l(i_l,j_l) = (i,j) \right\}.
}
\end{equation*}
For each partition $(k_l,(i_l,j_l))_{l\in [\![ 1,s ]\!]}$ of $(i,j)$,  the corresponding product term, considered as an element of   $\mathbb C[\lambda_{1,0},\lambda_{0,1}]$, has degree $\displaystyle Deg \ \prod_{l=1}^s (\lu{i_l}{j_l})^{k_l}=\sum_{l=1}^s k_l Deg\ \lu{i_l}{j_l}$. Combining Proposition \ref{prop:linjnorm}  and the fact that $\lambda_{i,j}=0$ for all $(i,j)$ such that $1<i+j<\mfn$, we can conclude that this degree is at most equal to
\begin{equation}{\label{eq:sumdeg}
\sum_{i_l=0,j_l=1} k_l j_l+\sum_{i_l=1,j_l=0} k_l i_l
+\sum_{1<i_l+j_l<\mfn} k_l \cdot 0 
+\sum_{i_l+j_l\geq\mfn} k_l (i_l+j_l-1) .
}
\end{equation}

The partition with two terms $(i,j)=j(0,1)+i(1,0)$ corresponds to the term $(\luzo)^j(\luoz)^i$, which is the leading term in $\dx^i\dy^j \varphi \left(x_0,y_0\right)$. Indeed, any  other partition will include at least one term such that $i_l+j_l>1$, and the degree corresponding to this term within the product is either $ k_l \cdot 0$ or $ k_l (i_l+j_l-1)$, and in both case it is at most equal to $k_l (i_l+j_l)-1$. As a result, the degree of the product term in \eqref{eq:sumdeg} is necessarily less than $\displaystyle \sum_{l=1}^s k_l(i_l+j_l)= i+j$. So $R_{i,j}$, which is defined as the difference between $\dx^i\dy^j \varphi \left(x_0,y_0\right)$ and its leading term $(\luzo)^j (\luoz)^i$, is as expected of degree less than $i+j$.

Finally, the coefficients of $R_{i,j}$ share the same property as the coefficients of $\luij$s from Propositions \ref{prop:linjnorm}.
\end{proof}

\begin{rmk}
As mentioned in Remark \ref{rmk:sim}, under the hypothesis $\alpha_{0,\mfn}(x_0,y_0)\neq 0$, an algorithm very similar to Algorithm \ref{algo} would construct the polynomial coefficients of a GPW, fixing the values of $\{ \lambda_{i,j}, 0\leq j\leq \mfn -1, 0\leq i\leq q+\mfn -1 -j \}$. The corresponding version of Proposition \ref{prop:derphi} could then be proved essentially by exchanging the roles of $i$ and $j$ in all the proofs.
\end{rmk}

\subsection{Local set of GPWs}
\label{sec:set}
At this point for a given value of $\theta\in\mathbb R$ we can construct a GPW as a function $\varphi=\exp P$ where the polynomial $P$ is a solution to Problem \eqref{thesyst} constructed thanks to Algorithm \ref{algo}  with all the fixed values $\lambda_{i,j}$ such that $i<\mfn$ and $i+j\neq 1$ set to zero, and 
$$
\begin{pmatrix}\lambda_{1,0}\\\lambda_{0,1}\end{pmatrix}
=
\cpxi \kappa A^{-1}D^{-1/2}
\begin{pmatrix}\cos\theta\\\sin\theta\end{pmatrix}.
$$ 
This parameter $\theta$ is then equivalent to the direction a classical plane wave, while $|\kappa|$ is equivalent to the wave number of a classical plane wave, and $\theta$ will now be used to construct a set of GPWs.  Under Hypotheses \ref{hyp} and \ref{hyp2}, by choosing $p$ different angles $\{\theta_l,l\in\mathbb N^*, l\leq p\}\in\mathbb R^p$, we can consider $p$ solutions to Problem \eqref{thesyst} to construct $p$ GPWs.
\begin{dfn}\label{df:norm}
\defOp
\defp
 Under Hypotheses \ref{hyp} and \ref{hyp2}, consider the normalization $\lambda_{i,j}$ such that $i<\mfn$ and $i+j\neq 1$ set to zero, and 
$$
\begin{pmatrix}\lambda_{1,0}^l\\\lambda_{0,1}^l\end{pmatrix}
=
\kappa A^{-1}D^{-1/2}
\begin{pmatrix}\cos\theta_l\\\sin\theta_l\end{pmatrix},
\text{ for }\{ \theta_l\in[0,2\pi), \forall l\in\mathbb N^*, l\leq p, \theta_{l_1}\neq\theta_{l_2} \ \forall l_1\neq l_2,\kappa\in\mathbb C^*\}.
$$
The set of corresponding GPWs contructed from Algorithm \ref{algo} will be denoted hereafter by
$$\VGPW =\{
\varphi_l:=\exp P_l, \forall l\in\mathbb N^*, l\leq p
\}.$$
\end{dfn}

\section{Interpolation properties}\label{sec:int}
This section is restricted to operators of order $\mfn=2$. 

We now have built tools to turn to the interpolation properties of GPWs. In particular, since the GPWs are constructed locally, and will be defined separately on each mesh element, we focus on local interpolation properties. Given a partial differential operator $\mathcal L$, a point $ (x_0,y_0)\in\mathbb R^2$ and an integer $n\in\mathbb N$, the question is whether we can find a finite dimensional space $\mathbb V_h\subset \mathcal C^{\infty}$, with the following property:
\begin{equation}\label{IntPb}
\begin{array}{l}
\forall u
\text{ satisfying }\mathcal Lu=0,
\exists u_a\in\mathbb V_h \text{ s. t. }
\forall (x,y)\in\mathbb R^2,
|u(x,y)-u_a(x_0,y_0)|\leq C \| (x,y)-(x_0,y_0) \|^{n+1},
\end{array}
\end{equation}
that is to say there exists an element of $\mathbb V_h$ whose Taylor expansion at $(x_0,y_0)$ matches the Taylor expansion of $u$  at $(x_0,y_0)$ up to order $n$, for any solution $u$ of the PDE $\mathcal Lu=0$. If $\{f_i,i\in\mathbb N^*, i\leq p\}$ is a basis of $\mathbb V_h$, this can be expressed in terms of linear algebra. Consider the vector space $\mathbb F$ and the matrix $\mathsf M\in\mathbb C^{(n+1)(n+2)/2\times p}$ defined as follows:
$$
\mathbb F :=\left\{\mathsf F\in\mathbb C^{(n+1)(n+2)/2},\exists u\text{ satisfying }\mathcal Lu=0 \text{ s.t. } \mathsf F_{\frac{(k_1+k_2)(k_1+k_2+1)}{2}+k+2+1} = \dx^{k_1} \dy^{k_2} u (x_0,y_0) /(k_1!k_2!)\right\},
$$
\begin{equation}\label{eq:defM}
\mathsf M_{\frac{(k_1+k_2)(k_1+k_2+1)}{2}+k_2+1,i} := \dx^{k_1} \dy^{k_2} f_i (x_0,y_0)/(k_1!k_2!).
\end{equation}
Then \eqref{IntPb} is equivalent to 
$$
\forall \mathsf F \in\mathbb F, \exists \mathsf X\in\mathbb C^{p} \text{ s.t. } 
\mathsf M \mathsf X = \mathsf F,
$$
and the choice of $p$, the number of basis functions, will be crucial to our study.

Our previous work on GPWs was focused on the Helmholtz equation, i.e. $\mathcal L = -\Delta+\beta(x,y)$, and in that case the classical PWs are exact solutions to the PDE if the coefficient is constant $\beta(x,y)=-\kappa^2$. However, even though the proof of the interpolation properties of GPWs relies strongly on that of classical PWs, it is not required, in order to obtain the GPW result, for classical PW to be solutions of the constant coefficient equation \cite{LMinterp}. Indeed, what will be central to the proof that follows is the rank of the matrix $\mathsf M$ associated to a set of reference functions - not necessarily classical PWs - that are not required to satisfy any PDE.
For the Helmholtz equation, the reference functions used in \cite{LMinterp} were classical PWs if $\beta(x_0,y_0)<0$ and real exponentials if $\beta(x_0,y_0)>0$, and the structure of the proof provides useful guidelines for what follows. 

\subsection{Comments on a standard reference case}
\label{sec:PWinterp}
Interpolation properties of classical plane waves were already presented  for instance in \cite{LMinterp}, and in \cite{cess}, however the link between desired order of approximation $n$ and number $p$ of basis functions was simply provided as $p=2n+1$. We present here a new perspective, focusing on properties of trigonometric functions, to justify this choice. The corresponding set of trigonometric functions will constitute the reference case at the heart of the GPWs interpolation properties.

\begin{dfn}\label{dfn:MnCR} 
Consider a given $n\in\mathbb N^*$ and a given $p\in\mathbb N^*$.
Considering for some $\kappa\in\mathbb R^*$ a space $\mathbb V_h^\kappa=Span\{\exp \mathrm i \kappa (\cos\theta_l(x-x_0)+\sin\theta_l(y-y_0)), 1\leq l\leq p,\theta_l\in[0,2\pi),\theta_{l_1}\neq\theta_{l_2} \ \forall l_1\neq l_2\}$ of classical PWs, we define the corresponding matrix \eqref{eq:defM} for the plane wave functions spanning $\mathbb V_h^\kappa$, denoted $\mathsf M^C$, as well as the reference matrix $\mathsf M^R$, by
$$\forall (k_1,k_2)\in\mathbb N^2, k_1+k_2\leq n,\
\left\{\begin{array}{l}
\left(\mathsf M^C_n\right)_{\frac{(k_1+k_2)(k_1+k_2+1)}{2}+k_2+1,l} := (\mathrm i \kappa)^{k_1+k_2} \cos\theta_l^{k_1} \sin\theta_l^{k_2}/(k_1!k_2!),\\
\left(\mathsf M^R_n\right)_{\frac{(k_1+k_2)(k_1+k_2+1)}{2}+k_2+1,l} :=  \cos\theta_l^{k_1} \sin\theta_l^{k_2}/(k_1!k_2!).
\end{array}
\right.
$$
If we denote by $\mathsf D_n^{RC}=diag(d_k^{RC}, k\text{ from } 1 \text{ to }n+1)$ the block diagonal matrix with blocks of increasing size $d_k^{RC}=(\mathrm i \kappa)^{k-1} I_{k}\in\mathbb C^{k\times k}$, it is evident that $\mathsf M^C_n = \mathsf D_n^{RC}\mathsf M^R_n$, therefore trigonometric functions are closely related to interpolation properties of PWs.
\end{dfn}

Consider the two sets of functions
$$ \mathcal F_n=\{\theta\mapsto\cos^k \theta\sin^{K-k}\theta/(k!(K-k)!), 0\leq k\leq K\leq n\},\
\quad \text{and}\quad
\mathcal G_n=\{\theta\mapsto\exp i k\theta,-n\leq k\leq n\}.
$$
The first one, $\mathcal F_n$, is a set of $(n+1)(n+2)/2$ functions. The second one, $\mathcal G_n$, is a set of $2n+1$ linearly independent functions: indeed, any null linear combination of these functions $\sum_{-n \leq k \leq n} \nu_k \exp(i k \theta)$ would define a function $f(x) = \sum_{-n \leq k \leq n} \nu_k x^k$ that would be uniformly null on the circle $|x|=1$, implying that the polynomial $x^n.f(x)$ has an infinite number of roots ; hence all its coefficients $\nu_k$ are null. Moreover since
$$
\left\{\begin{array}{l}\displaystyle
\cos(\theta)^k\sin(\theta)^{K-k} = 
\left(\frac{e^{i\theta}+e^{-i\theta}}{2 }\right)^k
\left(\frac{e^{i\theta}-e^{-i\theta}}{2i}\right)^{K-k}
=
\frac 1 {2^K i^{K-k}} 
\sum_{l=0}^k\sum_{L=0}^{K-k}
\begin{pmatrix}k\\l\end{pmatrix}
\begin{pmatrix}K-k\\L\end{pmatrix}
 e^{ i (2l+2L-K)\theta},
\\\displaystyle
\phantom{\cos(\theta)^k\sin(\theta)^{K-k} =\left(\frac{e^{i\theta}+e^{-i\theta}}{2 }\right)^k } 
\text{with }-K\leq 2l+2L-K\leq K\Rightarrow  \mathcal F_n \subset Span\ \mathcal G_n,
\\\displaystyle
\exp \pm i k \theta = \sum_{s=0}^k 
\begin{pmatrix}k\\s\end{pmatrix}
(\pm i)^s \cos(\theta)^{k-s}\sin(\theta)^s
\Rightarrow  \mathcal G_n \subset Span\ \mathcal F_n,
\end{array}\right.
$$
 we then have that $Span\ \mathcal F_n = Span\ \mathcal G_n$, and in particular the space spanned by $\mathcal F_n$ is of dimension $2n+1$.

Consider any matrix $\matM\in\mathbb C^{(n+1)(n+2)/2\times N_p}$ defined for some $\{\theta_l\}_{1\leq l\leq N_p}\in\left(\mathbb R\right)^{N_p}$, with $N_p>2n+1$, by
$$
\matM_{il} = f_i(\theta_l), \text{ where } f_i \text{ denotes the elements of }\mathcal F_n \text{ (independently of their numbering).}
$$
Its rank is at most $2n+1$. This is a simple consequence of the fact that the dimension of $Span\ \mathcal F_n$ is $2n+1<(n+1)(n+2)/2$: indeed, this implies that there exists a matrix $\matC\in\mathbb C^{((n+1)(n+2)/2-2n-1)\times (n+1)(n+2)/2}$ of rank $(n+1)(n+2)/2-2n-1$ such that
$$
\forall i\in\mathbb N, 1\leq i\leq (n+1)(n+2)/2-2n-1, \sum_{j=1}^{(n+1)(n+2)/2} \matC_{ij} f_j = 0,
$$
and therefore $\matC\matM =0_{((n+1)(n+2)/2-2n-1)\times N_p}$ ; as a result the $N_p$ columns of $\matM$ belong to the kernel of $\matC$, which is of dimension $2n+1$; so the rank of $\matM$ is at most $2n+1$. In particular 
the matrix $\mathsf M_n^R$ introduced in Definition \ref{dfn:MnCR} is such a matrix $\mathsf A^{\mathcal F}$, and is therefore of rank at most $2n+1$. 

We know that $\mathsf M^C_n=\mathsf D_n^{RC}\mathsf M^R_n$ and $\mathsf D_n^{RC}$ is non-singular, so $rk(\mathsf M_n^C)=rk(\mathsf M^R_n)$. The rank of $\mathsf M^C_n$ is at most equal to $2n+1$ for any choice of angles $\{\theta_l\in\mathbb R,1\leq l\leq p\}$. It was previously proved in Lemma 2 from \cite{LMinterp} that for $p=2n+1$ and directions such that $\{\theta_l\in [0,2\pi),1\leq l\leq p, l_1\neq l_2\Rightarrow \theta_{l_1}\neq\theta_{l_2}\}$ the matrix  $\mathsf M_n^C$ has rank $2n+1$. A trivial corollary of this proof is that, for any choice of $p$ distinct angles in $[0,2\pi)$,  
\begin{equation}\label{eq:2n+1}
rk  (\mathsf M^C_n)=2n+1=rk(\mathsf M^R_n) \Leftrightarrow p\geq 2n+1.
\end{equation}

In \cite{LMinterp} we also proved that the space $\mathbb F$ for the constant coefficient Helmholtz operator is equal to the range of $\mathsf M_n^C$ for the corresponding wave number $\kappa$.
As a direct consequence, a space  $\mathbb V_h^{\kappa}=Span\{\exp \mathrm i \kappa (\cos\theta_l(x-x_0)+\sin\theta_l(y-y_0)), 1\leq l\leq p\}$ for any choice of distinct angles in $[0,2\pi)$ satisfies the interpolation property \eqref{IntPb} for the Helmholtz equation if and only if $p\geq 2n+1$.

\subsection{Generalized Plane Wave case}
\label{sec:GPWinterp}
In order to prove that a GPW space $Span\ \VGPW$ (introduced in Definition \ref{df:norm}) satisfies the interpolation property \eqref{IntPb}, we will rely on Proposition \ref{prop:derphi} to study the rank of the matrix \eqref{eq:defM} built from GPWs. As in the Helmholtz case, the proof relates the GPW matrix to the reference matrix, but here via an intermediate transition matrix.

\begin{dfn}\label{dfn:Mn}
\defOp
For the corresponding set of GPWs, $\VGPW=\{\varphi_l:=\exp P_l, \forall l\in\mathbb N^*, l\leq p,\theta_l\in[0,2\pi),\theta_{l_1}\neq\theta_{l_2} \ \forall l_1\neq l_2,\kappa\in\mathbb C^*\}$, we define the corresponding matrix \eqref{eq:defM}, denoted $\mathsf M_n$, as well as the transition matrix $\mathsf M_n^{Tr}$, by
$$
\left\{\begin{array}{l}
\left(\mathsf M^{Tr}_n\right)_{\frac{(k_1+k_2)(k_1+k_2+1)}{2}+k_2+1,l} := (\luoz^l)^{k_1} (\luzo^l)^{k_2}/(k_1!k_2!),\\
\left(\mathsf M_n\right)_{\frac{(k_1+k_2)(k_1+k_2+1)}{2}+k_2+1,l} :=  \dx^{k_1}\dy^{k_2} \varphi_l(x_0,y_0)/(k_1!k_2!).
\end{array}
\right.
$$
\end{dfn}
%

We first relate the transition matrix $\mathsf M_n^{Tr}$ to the reference matrix $\mathsf M_n^R$.
\begin{lmm}\label{lmm:DRT}
Consider an open set $\Omega\subset\mathbb R^2$, $(x_0,y_0)\in\Omega$, a given $(\mfn,n,p,q)\in(\mathbb N^*)^4$, $\mfn\geq 2$,  and a given set of complex-valued functions $\alpha=\{ \alpha_{k_1,k_2}\in\mathcal C^{q-1}(\Omega),0\leq k_1+k_2\leq \mfn \}$, the corresponding partial differential operator $\Lal$ and set of GPWs $\VGPW$. 
There exists a block diagonal non-singular matrix $\mathsf D_n^{RT}$ such that $\mathsf M_n^{Tr} = \mathsf D_n^{RT}\mathsf M^R_n$, independently of the number $p$ of GPWs in $\VGPW$.
\end{lmm}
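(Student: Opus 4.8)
The plan is to read off the block structure directly from the normalization fixed in Definition \ref{df:norm}. By that normalization the pair of free coefficients of the $l$-th GPW is the image of $(\cos\theta_l,\sin\theta_l)$ under one single fixed linear map: setting $M$ to be the fixed $2\times2$ matrix $\kappa A^{-1}D^{-1/2}$ (equivalently $\cpxi\kappa A^{-1}D^{-1/2}$, cf.\ \eqref{norm:oz}), we have
\[
\begin{pmatrix}\luoz^l\\\luzo^l\end{pmatrix}=M\begin{pmatrix}\cos\theta_l\\\sin\theta_l\end{pmatrix}\qquad\text{for every }l.
\]
Under Hypothesis \ref{hyp2} the matrix $A$ is non-singular and $\mu_1\mu_2\neq0$, so $D^{-1/2}$ is well defined and non-singular, and $\kappa\in\mathbb C^*$; hence $M$ is non-singular. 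Crucially, $M$ depends only on $\kappa,A,D$, not on $l$, not on the angles $\theta_l$, and not on $p$.

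First I would expand the entries of $\mathsf M^{Tr}_n$. For $(k_1,k_2)$ with $K:=k_1+k_2\leq n$, writing $\luoz^l=M_{11}\cos\theta_l+M_{12}\sin\theta_l$ and $\luzo^l=M_{21}\cos\theta_l+M_{22}\sin\theta_l$ and applying the binomial theorem shows that $(\luoz^l)^{k_1}(\luzo^l)^{k_2}$ is a homogeneous polynomial of degree $K$ in $(\cos\theta_l,\sin\theta_l)$, say
\[
(\luoz^l)^{k_1}(\luzo^l)^{k_2}=\sum_{j=0}^{K} c^{K}_{k_2,j}\,\cos^{K-j}\theta_l\,\sin^{j}\theta_l,
\]
with coefficients $c^{K}_{k_2,j}$ depending only on $M$ and on the exponents, never on $l$. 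After dividing by $k_1!k_2!$ and re-inserting the factorial weights $1/((K-j)!\,j!)$ carried by $\mathsf M^R_n$, this identity says exactly that row $\tfrac{K(K+1)}2+k_2+1$ of $\mathsf M^{Tr}_n$ is a fixed linear combination of the rows $\tfrac{K(K+1)}2+j+1$, $0\le j\le K$, of $\mathsf M^R_n$. Collecting these relations over all $(k_1,k_2)$ with $k_1+k_2\le n$ yields a matrix identity $\mathsf M^{Tr}_n=\mathsf D^{RT}_n\mathsf M^R_n$ in which, since the substitution preserves total degree, $\mathsf D^{RT}_n$ is block diagonal with one block $D_K\in\mathbb C^{(K+1)\times(K+1)}$ per total degree $K=0,\dots,n$; and since $M$ is fixed, $\mathsf D^{RT}_n$ is the same whatever the $\theta_l$ and whatever $p$.

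It then remains to show that $\mathsf D^{RT}_n$ is non-singular, i.e.\ that each $D_K$ is. Up to left and right multiplication by the (diagonal, invertible) matrices implementing the factorial rescalings, $D_K$ is the matrix of the linear endomorphism of the space of homogeneous degree-$K$ polynomials in two variables induced by the substitution $M$, that is the $K$-th symmetric power of $M$. Triangularizing $M$ (eigenvalues $\mu_1,\mu_2$ with $\mu_1\mu_2=\det M\neq0$), this endomorphism is triangular in a suitable basis with diagonal entries $\mu_1^{K-j}\mu_2^{j}$, $j=0,\dots,K$, so $\det D_K=(\det M)^{K(K+1)/2}\neq0$. Hence every block, and therefore $\mathsf D^{RT}_n$, is non-singular, which proves the claim.

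I expect no substantive obstacle here: the only place needing care is the second step, namely keeping the row index $\tfrac{K(K+1)}2+k_2+1$ and the factorial normalizations aligned on both sides so that the block $D_K$ is genuinely the symmetric-power matrix of $M$; the non-singularity of the blocks then follows from the short triangularization argument above.
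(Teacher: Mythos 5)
Your proof is correct and follows the same route as the paper's: write $(\luoz^l,\luzo^l)^t$ as a fixed, $l$-independent $2\times2$ matrix applied to $(\cos\theta_l,\sin\theta_l)^t$ and expand $(\luoz^l)^{k_1}(\luzo^l)^{k_2}$ by the binomial theorem, so that each row of $\mathsf M_n^{Tr}$ of total degree $K$ is an $l$-independent combination of the degree-$K$ rows of $\mathsf M_n^R$, giving the block-diagonal factor with one block per degree. Where you go beyond the paper is the non-singularity of $\mathsf D_n^{RT}$: the paper's proof only constructs the blocks and leaves invertibility implicit, whereas you identify each block (up to the diagonal factorial rescalings) with the $K$-th symmetric power of the $2\times2$ matrix $M=\kappa A^{-1}D^{-1/2}$ and compute $\det = (\det M)^{K(K+1)/2}\neq 0$ using Hypothesis \ref{hyp2} ($A$ invertible, $\mu_1\mu_2\neq0$, $\kappa\in\mathbb C^*$); this is a genuine and welcome completion of the argument, and your caution about keeping the row indexing and factorial weights aligned is exactly the right bookkeeping point.
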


\begin{proof} 
As long as there are four complex numbers $a,b,c,d$ such that
$$\forall p\in\mathbb N, 1\leq l\leq p,\
\begin{pmatrix} \luoz^l\\\luzo^l\end{pmatrix} 
=
\begin{pmatrix} a & b\\c&d\end{pmatrix} 
\begin{pmatrix} \cos\theta_l\\\sin\theta_l\end{pmatrix} ,
$$
then the diagonal blocks of $\mathsf D_n^{RT}=diag(d_K^{RT},K\text{ from } 0 \text{ to } n)$ of increasing size $d_K^{RT}\in\mathbb C^{(K+1)\times (K+1)}$ can be built thanks to the following binomial formula
$$
(\luoz^l)^{K-k}(\luzo^l)^k = \sum_{i=0}^{K-k}\sum_{j=0}^k 
\begin{pmatrix} K-k\\i\end{pmatrix}
\begin{pmatrix} k\\j\end{pmatrix}
a^ic^jb^{K-k-i}d^{K-k-j} (\cos\theta_l)^{i+j} (\sin\theta_l)^{K-i-j}
$$
since the coefficient of this linear combination of trigonometric functions are independent on $l$. 
\end{proof}

The following step is naturally to relate the GPW matrix $\mathsf M_n$ to the reference matrix $\mathsf M_n^R$.
\begin{prop}\label{prop:LRn}
Consider an open set $\Omega\subset\mathbb R^2$, $(x_0,y_0)\in\Omega$, a given $(\mfn,n,p,q)\in(\mathbb N^*)^4$, $\mfn\geq 2$, $q\geq n-1$, and a given set of complex-valued functions $\alpha=\{ \alpha_{k_1,k_2}\in\mathcal C^{\max(n,q-1)}(\Omega),0\leq k_1+k_2\leq \mfn \}$, the corresponding partial differential operator $\Lal$ and set of GPWs $\VGPW$. 
There exists a lower triangular matrix $L_n^R$, whose diagonal coefficients are equal all non-zero and whose other non-zero coefficients depend only on derivatives of the PDE coefficients $\alpha$ evaluated at $(x_0,y_0)$, such that
$$
\mathsf M_n = \mathsf L_n^R \cdot \mathsf M_n^R.
$$
As a consequence $rk(\mathsf M_n)=rk(\mathsf M_n^R)$ independently of the number $p$ of GPWs in $\VGPW$, and both $\| \mathsf L_n^R\|$ and $\|(\mathsf L_n^R)^{-1} \|$ are bounded by a constant depending only on the PDE coefficients $\alpha$.
\end{prop}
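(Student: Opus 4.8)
The plan is to factor the passage from the GPW matrix $\mathsf M_n$ to the reference matrix $\mathsf M_n^R$ through the transition matrix $\mathsf M_n^{Tr}$ of Definition~\ref{dfn:Mn}: first relate $\mathsf M_n$ to $\mathsf M_n^{Tr}$ via the Taylor‑coefficient formula of Proposition~\ref{prop:derphi}, then relate $\mathsf M_n^{Tr}$ to $\mathsf M_n^R$ via Lemma~\ref{lmm:DRT}, and finally compose the two linear maps. The decisive feature is that both intermediate matrices are built without any reference to the number $p$ of GPWs or to the chosen angles $\theta_1,\dots,\theta_p$, so that the rank identity and the norm bounds transfer from the already understood reference case \emph{uniformly} in $p$.

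For the first leg, I would fix an index $l$ and a multi-index $(k_1,k_2)$ with $k_1+k_2\le n$; since $q\ge n-1$ this lies in the range $k_1+k_2\le q+1$ where Proposition~\ref{prop:derphi} applies, giving
\[
\frac{\dx^{k_1}\dy^{k_2}\varphi_l(x_0,y_0)}{k_1!k_2!}
=\frac{(\luoz^l)^{k_1}(\luzo^l)^{k_2}}{k_1!k_2!}
+\frac{R_{k_1,k_2}}{k_1!k_2!},
\]
with $R_{k_1,k_2}\in\mathbb C[\luoz,\luzo]$ of total degree at most $k_1+k_2-1$ and coefficients depending only on $k_1,k_2$ and the derivatives of $\alpha$ at $(x_0,y_0)$ — in particular not on $\theta_l$ nor on $p$. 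Writing $R_{k_1,k_2}=\sum_{a+b\le k_1+k_2-1}c^{k_1,k_2}_{a,b}(\luoz)^a(\luzo)^b$, each summand $\tfrac1{a!b!}(\luoz^l)^a(\luzo^l)^b$ is exactly the entry of $\mathsf M_n^{Tr}$ in the row indexed by $(a,b)$, and since $a+b<k_1+k_2$ a short computation shows its row index $\tfrac{(a+b)(a+b+1)}2+b+1$ is strictly smaller than the row index $\tfrac{(k_1+k_2)(k_1+k_2+1)}2+k_2+1$ of $(k_1,k_2)$. This yields $\mathsf M_n=\mathsf L'_n\,\mathsf M_n^{Tr}$ with $\mathsf L'_n$ lower triangular and all diagonal entries equal to $1$, its off-diagonal entries being the constants $c^{k_1,k_2}_{a,b}\,a!b!/(k_1!k_2!)$, hence the same for every $l$ and bounded purely in terms of finitely many derivatives of $\alpha$ at $(x_0,y_0)$.

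For the second leg, the normalization of Definition~\ref{df:norm} reads $(\luoz^l,\luzo^l)^t=\kappa A^{-1}D^{-1/2}(\cos\theta_l,\sin\theta_l)^t$, so Lemma~\ref{lmm:DRT} supplies a non-singular block‑diagonal matrix $\mathsf D_n^{RT}$, whose $K$-th block $d_K^{RT}\in\mathbb C^{(K+1)\times(K+1)}$ acts on the rows of total degree $K$, with $\mathsf M_n^{Tr}=\mathsf D_n^{RT}\mathsf M_n^R$. Setting $\mathsf L_n^R:=\mathsf L'_n\,\mathsf D_n^{RT}$ then gives $\mathsf M_n=\mathsf L_n^R\,\mathsf M_n^R$; since the diagonal blocks of $\mathsf L'_n$ in this degree grading are identities (because $R_{k_1,k_2}$ has degree strictly below $k_1+k_2$), $\mathsf L_n^R$ is lower triangular (blockwise, and literally so whenever $\mathsf D_n^{RT}$ is) with invertible diagonal blocks $d_K^{RT}$, hence invertible, so that $rk(\mathsf M_n)=rk(\mathsf M_n^{Tr})=rk(\mathsf M_n^R)$ whatever the $\theta_l$ and $p$. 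The norm estimates follow because, $n$ being fixed, only finitely many entries are involved: $\|\mathsf L'_n\|$ and $\|(\mathsf L'_n)^{-1}\|$ are controlled by the derivatives of $\alpha$ at $(x_0,y_0)$, while $\|\mathsf D_n^{RT}\|$ and $\|(\mathsf D_n^{RT})^{-1}\|$ are controlled by $\kappa$ and by the matrices $A,D$ attached to the principal part of $\Lal$ at $(x_0,y_0)$ — none of which sees $p$ or the angles.

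I expect the main obstacle to be not a deep step but the bookkeeping underlying the first leg: one must verify that every monomial occurring in $R_{k_1,k_2}$ — for which Proposition~\ref{prop:derphi} guarantees only degree $\le k_1+k_2-1$ — indexes a row of $\mathsf M_n^{Tr}$ lying strictly above that of $(k_1,k_2)$ in the ``total degree, then $k_2$'' ordering used to build $\mathsf M_n$, so that the correction $\mathsf L'_n$ is genuinely (strictly) lower triangular and not merely triangular in some coarser block sense; and one must keep careful track that the entries of $\mathsf L'_n$ are independent of $l$ — this is precisely the ``coefficients independent of $\theta$'' clause of Proposition~\ref{prop:derphi}, and, together with the independence of $\mathsf D_n^{RT}$ from the $\theta_l$, it is exactly what forces the rank identity and the norm bounds to be uniform in $p$, the property the interpolation proof ultimately relies on.
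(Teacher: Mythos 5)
Your proposal is correct and follows essentially the same route as the paper: it factors $\mathsf M_n$ through the transition matrix $\mathsf M_n^{Tr}$ using Proposition \ref{prop:derphi} (with $q\geq n-1$ ensuring the formula applies to all rows), then applies Lemma \ref{lmm:DRT} and sets $\mathsf L_n^R:=\mathsf L_n^T\cdot\mathsf D_n^{RT}$, exactly as in the paper's proof. The triangularity and $l$-independence points you flag as potential obstacles are precisely the observations the paper makes (degree of $R_{i,j}$ strictly below $i+j$, coefficients independent of $\theta$), so there is no gap.
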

\begin{rmk}
If $n=1$, then the various matrices $\mathsf M$ belong to $\mathbb C^{3\times 3}$, and we have $\mathsf M_n = \mathsf M_n^{Tr}$ independently of the value of $q$.
\end{rmk}

\begin{proof}
Let's first relate $\mathsf M_n$ to $\mathsf M_n^{Tr}$.
The polynomials $R_{i,j}\in\mathbb C[X,Y]$ obtained in Proposition \ref{prop:derphi} have degree $\textrm d R_{i,j}\leq i+j-1$ and satisfy
\begin{equation}\label{eq:derR}
\forall (i,j)\in\mathbb N^2,i+j\leq q+1, \forall \varphi_l\in\VGPW, \dx^i\dy^j \varphi_l(x_0,y_0) = 
        (\luoz^l)^i(\luoz^l)^j
+R_{i,j} (\luoz^l  ,\luoz^l).
\end{equation} 
In order to apply this to all entries in the matrix $\mathsf M_n$, it is sufficient for $q$ to satisfy $n\leq q+1$, which explains the assumption on $q$. Therefore each  entry $(i,j)$ of the matrix $\mathsf M_n$ can be written as the sum of the $(i,j)$ entry of $\mathsf M_n^{Tr}$ and a linear combination of entries $(k,j)$ of $\mathsf M_n^{Tr}$ for $k<i$. In other words, the existence of a lower triangular matrix $\mathsf L_n^T$, whose diagonal coefficients are 1 and whose other non-zero coefficients depend only on the derivatives of the coefficients $\alpha$ evaluated at  $(x_0,y_0)$, such that $\mathsf M_n = \mathsf L_n^T\cdot\mathsf M_n^{Tr}$ is guaranteed since the coefficients of $R_{i,j}$ are independent of $l$ and any monomial in $R_{i,j}(\luoz,\luoz)$ has a degree lower than $i+j$. 

As a consequence, the existence of $\mathsf L_n^R$ is guaranteed by Lemma \ref{lmm:DRT} since $\mathsf L_n^R:=\mathsf L_n^T\cdot\mathsf D_n^{RT}$ satisfies the desired properties. 
\end{proof}

Everything is now in place to state and finally prove the necessary and sufficient condition on the number $p$ of GPWs for the space $\VGPW$ to satisfy the interpolation property \eqref{IntPb}. We here turn to the specific case of second order operators.
\begin{thm}\label{thethm}
\defParThm
The space $\mathbb V_h^{G}:=span\VGPW$ satisfies the property
\begin{equation}\label{eq:approx}
\begin{array}{l}
\forall u\in \mathcal C^{n+2}(\Omega)
\text{ satisfying }\mathcal L_{2,\alpha}u=0,
\exists u_a\in\mathbb V_h^{G}, \exists \text{ a constant } C(\Omega,n) \text{ s. t. }\\
\forall (x,y)\in\Omega,
|u(x,y)-u_a(x_0,y_0)|\leq C(\Omega,n) \| (x,y)-(x_0,y_0) \|^{n+1},
\end{array}
\end{equation}
if and only if $p\geq 2n+1$.
\end{thm}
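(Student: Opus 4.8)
The plan is to reduce \eqref{eq:approx} --- a localized form of \eqref{IntPb} --- to the single linear-algebra statement that $\mathbb F$ is contained in the range of the GPW matrix $\mathsf M_n$ of Definition~\ref{dfn:Mn}, exactly as recorded in the paragraph containing \eqref{eq:defM}. The whole proof then amounts to trapping both $\mathbb F$ and the range of $\mathsf M_n$ inside one and the same $(2n+1)$-dimensional space, after which the rank computation of Proposition~\ref{prop:LRn} together with \eqref{eq:2n+1} closes the argument.

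Concretely, I would first introduce the space $\mathbb W\subseteq\mathbb C^{(n+1)(n+2)/2}$ of jet vectors $(v_{k_1,k_2})_{k_1+k_2\le n}$ annihilated by the $n(n-1)/2$ linear relations coming from $\dx^I\dy^J[\mathcal L_{2,\alpha}v](x_0,y_0)=0$ for $I+J\le n-2$. Since $\mfn=2$ and $\alpha_{2,0}(x_0,y_0)\neq 0$ by Hypothesis~\ref{hyp}, the $(I,J)$ relation contains the length-$(I+J+2)$ coefficient $\dx^{I+2}\dy^J v(x_0,y_0)$ with a nonzero coefficient; ordering the relations by $I+J$ and then by increasing $I$, one solves successively for every Taylor coefficient of $x$-order at least $2$ in terms of the Cauchy-data coefficients $\dy^j v(x_0,y_0)$ ($0\le j\le n$) and $\dx\dy^j v(x_0,y_0)$ ($0\le j\le n-1$). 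Hence these relations are independent and $\dim\mathbb W=\frac{(n+1)(n+2)}{2}-\frac{n(n-1)}{2}=2n+1$. Next, $\mathbb F\subseteq\mathbb W$ because an exact solution of $\mathcal L_{2,\alpha}u=0$ satisfies all those relations, and the range of $\mathsf M_n$ is contained in $\mathbb W$ because each $\varphi_l$ is a GPW, so $\dx^I\dy^J[\mathcal L_{2,\alpha}\varphi_l](x_0,y_0)=0$ for $I+J\le q-1$, a range that contains $I+J\le n-2$ precisely because the hypothesis asks $q\ge n-1$.

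The identity $\mathbb F=\mathbb W$ is the heart of the matter, and I expect the inclusion $\mathbb W\subseteq\mathbb F$ to be the main obstacle: it is not a linear-algebra fact but a solvability statement, asserting that every admissible Cauchy jet on the non-characteristic line $\{x=x_0\}$ is realized by an honest $\mathcal C^{n+2}$ solution of $\mathcal L_{2,\alpha}u=0$ near $(x_0,y_0)$; once such a $u$ is produced, applying the computation of the previous paragraph to $u$ itself forces all of its remaining length-$\le n$ coefficients to coincide with the prescribed jet, whence $\dim\mathbb F=2n+1$. This is the variable-coefficient analogue of the equality $\mathbb F=\mathrm{ran}(\mathsf M_n^C)$ proved for the constant-coefficient Helmholtz operator in \cite{LMinterp}, and I would adapt that strategy --- producing, for each jet, an exact solution realizing it --- which is precisely the step where the smoothness assumption $\alpha\in\mathcal C^{n}(\Omega)$ and the non-degeneracy of the principal part (Hypothesis~\ref{hyp2}) are genuinely used.

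Finally I would conclude. By Proposition~\ref{prop:LRn}, $\mathsf M_n=\mathsf L_n^R\mathsf M_n^R$ with $\mathsf L_n^R$ square and non-singular, so $rk(\mathsf M_n)=rk(\mathsf M_n^R)$, which by \eqref{eq:2n+1} equals $2n+1$ when $p\ge 2n+1$ and is strictly smaller otherwise. If $p\ge 2n+1$, then the range of $\mathsf M_n$ has dimension $2n+1=\dim\mathbb W$ and is contained in $\mathbb W=\mathbb F$, hence equals $\mathbb F$, so \eqref{eq:approx} holds; for the quantitative estimate one takes $u_a=\sum_l\mathsf X_l\varphi_l$ with $\mathsf M_n\mathsf X$ the jet vector of $u$, observes that $u-u_a$ has all derivatives up to order $n$ vanishing at $(x_0,y_0)$, and invokes Taylor's theorem with remainder, the constant being controlled by $n$, $\Omega$, a norm of $u$, the fixed $\mathcal C^{n+1}(\Omega)$-norms of the finitely many $\varphi_l$, and the uniformly bounded $\|(\mathsf L_n^R)^{-1}\|$ and a left inverse of $\mathsf M_n^R$. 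If $p<2n+1$, then the range of $\mathsf M_n$ has dimension strictly below $2n+1=\dim\mathbb F$, so $\mathbb F$ cannot be contained in it and \eqref{eq:approx} fails. This yields the stated equivalence.
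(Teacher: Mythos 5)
Your reduction is the paper's own: the space $\mathbb W$ you introduce is exactly the space $\mathfrak K$ of the paper's proof, the dimension count $\dim\mathbb W=\frac{(n+1)(n+2)}{2}-\frac{n(n-1)}{2}=2n+1$ via the nonvanishing of $\alpha_{2,0}(x_0,y_0)$ is the same argument, the inclusions $\mathbb F\subseteq\mathbb W$ and $\mathcal R(\mathsf M_n)\subseteq\mathbb W$ (the latter from $q\geq n-1$) are the same, and the conclusion via Proposition \ref{prop:LRn} and \eqref{eq:2n+1} is identical. The one place you diverge is in declaring $\mathbb W\subseteq\mathbb F$ --- realizing every admissible jet by an exact solution --- to be ``the heart of the matter''. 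For the ``if'' direction this step is unnecessary, and the paper never proves it: since $\mathcal R(\mathsf M_n)\subseteq\mathbb W$ and $rk(\mathsf M_n)=2n+1=\dim\mathbb W$ when $p\geq 2n+1$, one gets $\mathcal R(\mathsf M_n)=\mathbb W$, and the trivial inclusion $\mathbb F\subseteq\mathbb W$ already yields $\mathbb F\subseteq\mathcal R(\mathsf M_n)$; every ingredient of that chain is in your proposal, so the sufficiency part is complete once you drop the detour through $\mathbb W\subseteq\mathbb F$.

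Where that claim does real work is your ``only if'' direction, since you invoke $\dim\mathbb F=2n+1$ to conclude that $\mathbb F$ cannot lie in a range of dimension below $2n+1$. There your proposal has a genuine gap: you only promise to ``adapt'' the constant-coefficient Helmholtz argument of \cite{LMinterp}, but with coefficients that are merely $\mathcal C^{n}(\Omega)$ and possibly complex-valued, producing an exact local solution of $\mathcal L_{2,\alpha}u=0$ with a prescribed admissible $n$-jet is not routine --- Cauchy--Kovalevskaya requires analytic data, and the explicit exponential solutions used in \cite{LMinterp} do not transfer to variable coefficients --- so this step cannot be waved through as written. For fairness, note that the paper's own proof establishes only the inclusion $\mathbb F_\alpha\subseteq\mathcal R(\mathsf M_n)$ for $p\geq 2n+1$ and leaves the necessity essentially at the level of the rank equivalence, so on that direction you are not making an error the paper avoids; but as a self-contained argument your necessity claim rests on an unproved solvability statement.
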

\begin{proof}
Combining \eqref{eq:2n+1} with Proposition \ref{prop:LRn}, if $q\geq n-1$, we see immediately that $rk(\mathsf M_n) = 2n+1$ if and only if $p\geq 2n+1$.

It is then sufficient to prove that the space
$$
\begin{array}{l}
\displaystyle
\mathbb F_\alpha :=\left\{\mathsf F\in\mathbb C^{(n+1)(n+2)/2},\exists v\in\mathcal C^n(\Omega)\text{ satisfying }\mathcal L_{2,\alpha}v=0 
\right.
\\\displaystyle
\phantom{
\mathfrak K :=
\mathfrak K :=
}
\left.
\text{ s.t. } \mathsf F_{\frac{(k_1+k_2)(k_1+k_2+1)}{2}+k_2+1} = \dx^{k_1} \dy^{k_2} v (x_0,y_0) /(k_1!k_2!)
\right\}.
\end{array}
$$
satisfies $\mathbb F_\alpha \subset \mathcal R(\mathsf M_n)$, the range of $\mathsf M_n$. To this end, we now define the space
$$
\begin{array}{l}
\displaystyle
\mathfrak K :=
\left\{
\mathsf K\in\mathbb C^{(n+1)(n+2)/2},\exists f\in\mathcal C^n(\Omega)\text{ satisfying }\mathcal L_{2,\alpha}f(x,y)=O(\|(x,y)-(x_0,y_0)\|^{n-1})
\right.
\\\displaystyle
\phantom{
\mathfrak K :=
\mathfrak K :=
}
\left.
 \text{ s.t. } \mathsf K_{\frac{(k_1+k_2)(k_1+k_2+1)}{2}+k_2+1} = \dx^{k_1} \dy^{k_2} f (x_0,y_0) /(k_1!k_2!)
\right\}.
\end{array}
$$
We can now see that
\begin{itemize}
\item $\mathcal R(\mathsf M_n)\subset \mathfrak K$ independently of the value of $p$,
since by construction of GPWs, as long as $q\geq n-1$, each column of $\mathsf M_n$ belongs to $\mathfrak K$;
\item $\mathbb F_\alpha\subset \mathfrak K$, by definition of $\mathbb F_\alpha$;
\item $\dim \mathfrak K=2n+1$, since - from the condition involving the Taylor expansion coefficients of $\mathcal L_{2,\alpha}f$ of order up to $n-2$ at $x_0,y_0)$ set to zero - $\mathfrak K\subset \mathbb C^{(n+1)(n+2)/2}$ is the kernel of a matrix $\mathsf A\in\mathbb C^{n(n-1)/2\times (n+1)(n+2)/2}$ with
$$
\begin{array}{l}
\forall(i,j)\in\mathbb N^2,i+j<n-1,
 \mathsf A_{\frac{(i+j)(i+j+1)}{2}+j+1,\frac{(i+j+2)(i+j+3)}{2}+j+1}=\alpha_{2,0}(x_0,y_0)\neq 0  \text{ from Hypothesis \ref{hyp},}\\
\forall(\tilde i,\tilde j)\in\mathbb N^2,\tilde i+\tilde j<n-1, 
\text{ if } \tilde i+\tilde j >i+j 
\text{ or } 
\text{ if } \tilde i+\tilde j =i+j, \tilde j>j\\
\phantom{
\forall(i,j)\in\mathbb N^2,i+j<n-1,
} 
\mathsf A_{\frac{(i+j)(i+j+1)}{2}+j+1,\frac{(\tilde i+\tilde j+2)(\tilde i+\tilde j+3)}{2}+\tilde j+1}=0, 
\end{array}
$$ 
 so that $\mathsf A$ is of maximal rank while its kernel has dimension $\frac{(n+1)(n+2}{2}-\frac{n(n-1)}{2}=2n+1$.
\end{itemize}
Therefore, if $p\geq 2n+1$, we obtain that $\mathcal R(\mathsf M_n)=\mathfrak K$ and as a consequence $\mathbb F_\alpha\in\mathcal R(\mathsf M_n)$ as expected. This concludes the proof.
\end{proof}
The necessary and sufficient condition on the number $p$ of GPWs for the space $\VGPW$ to satisfy the interpolation property \eqref{IntPb} when $\mfn>2$ are still unknown.

\begin{rmk}
As in \cite{LMinterp}, the theorem holds for the Helmholtz equation with sign changing.
\end{rmk}

\section{Numerical experiments}\label{sec:NR}
In \cite{LMinterp}, GPWs where constructed and studied for the Helmholtz equation \eqref{Helm} with a variable and sign-changing coefficient $\beta$. The numerical experiments presented there were restricted to the Helmholtz equation at one point $(x_0,y_0)\in\mathbb R^2$, but considered a propagative case i.e. $\beta(x_0,y_0)<0$, an evanescent case i.e. $\beta(x_0,y_0)>0$, a cut-off case i.e. $\beta(x_0,y_0)=0$. They also considered a case not covered by the convergence theorem, but important for future applications: considering GPWs centered at points $(x_0,y_0)$ at a distance $h$ from the cut-off.

Here, we are interested in illustrating the results presented in Theorem \ref{thethm}. Since the well known case of classical PW for the constant-coefficient Helmholtz equation is included by the hypotheses of the theorem, we cannot expect any improvement on the required number of basis functions $p$. However, we are interested in exploring the impact of the order of approximation $q$ on the convergence of \eqref{eq:approx}, in particular for anisotropic problems.

\subsection{Test cases}
We propose here four different test cases. Each test case consists of a partial differential operator of second order $\mathcal L$, an exact solution $u$ of the equation $\mathcal L u = 0$, as well as a computational domain $\Omega\subset\mathbb R^2$, such that Hypotheses \ref{hyp} and \ref{hyp2} hold at all $(x_0,y_0)\in\Omega$. The characteristics of the partial differential operators that we consider here are:
\begin{itemize}
\item polynomial coefficients $\alpha$,
\item non-polynomial coefficients $\alpha$,
\item anisotropy in the first order terms as $\overrightarrow a (x,y) \cdot\nabla$ for a vector-valued function $\overrightarrow a$;
\item anisotropy in the second order terms as $\nabla\cdot (A(x,y)\nabla)$ for a matrix-valued function $A$.
\end{itemize}

We consider one partial differential operator is isotropic with polynomial coefficients:
$$\left\{
\begin{array}{l}
\mathcal L_{Ad}:=-\Delta+2(x+y),\\
u_{Ad}:(x,y)\mapsto Ai(x+y), \\
\Omega_{Ad}:=(-2,2)^2.
\end{array}\right.
$$
We have $\mathcal L_{Ad}u_{Ad}=0$ on $\mathbb R^2$, all the coefficients of $\mathcal L_{Ad}$ belong to $\mathcal C^\infty(\mathbb R^2)$ and the coefficients $\{\alpha_{k,2-k}^{Ad};k=0,1,2\}$ satisfy 
$$\sum_{k=0}^2 \alpha_{k,2-k}^{Ad}(x_0,y_0)X^kY^{2-k} = X^2+Y^2\quad \forall (x_0,y_0)\in\mathbb R^2,$$
so $\mathcal L_{Ad}$ satisfies Hypotheses \ref{hyp} and \ref{hyp2} on $\mathbb R^2$. Note that the sign of the coefficient $\alpha_{0,0}^{Ad}(x,y) = 2(x+y)$ changes in the computational domain along the curve $x+y=0$.

We consider a partial differential operator with non-polynomial coefficients of the terms of order $1$ and $0$, and anisotropy in the first order term:
$$\left\{
\begin{array}{l}
\mathcal L_{Jc}:=
\nabla \cdot(x^2\nabla )+\begin{pmatrix}-x\\\cos y\end{pmatrix}\cdot\nabla +(\nu^2-2x^2-\sin y ),\\
u_{Jc}:(x,y)\mapsto J_1(x)\cos y,\\
\Omega_{Jc}:=(1,4)\times(0,2\pi).
\end{array}\right.
$$
We have $\mathcal L_{Jc}u_{Jc}=0$ on $(0,\infty)\times\mathbb R$, all the coefficients of $\mathcal L_{Jc}$ belong to $\mathcal C^\infty(\mathbb R^+\times\mathbb R)$ and the coefficients $\{\alpha_{k,2-k}^{Jc};k=0,1,2\}$ satisfy 
$$\sum_{k=0}^2 \alpha_{k,2-k}^{Jc}(x_0,y_0)X^kY^{2-k} =x_0^2( X^2+Y^2) \quad \forall (x_0,y_0)\in\mathbb R^2,$$
so $\mathcal L_{Jc}$ satisfies Hypotheses \ref{hyp} and \ref{hyp2} as long as  $x>0$.

We consider a partial differential operator with  polynomial coefficients and  anisotropy in the first and second order terms:
$$\left\{
\begin{array}{l}
\mathcal L_{JJ}:=
\nabla \cdot\begin{pmatrix}x^2&0\\0&y^2\end{pmatrix}\nabla -\begin{pmatrix}x\\y\end{pmatrix}\cdot\nabla+(x^2+y^2-1) ,\\
u_{JJ}:(x,y)\mapsto J_0(x)J_1( y),\\
\Omega_{JJ}:=(1,3)\times(1,3).
\end{array}\right.
$$
We have $\mathcal L_{JJ}u_{JJ}=0$ on $(\mathbb R^+)^2$, all the coefficients of $\mathcal L_{JJ}$ belong to $\mathcal C^\infty((\mathbb R^+)^2)$ and the coefficients $\{\alpha_{k,2-k}^{JJ};k=0,1,2\}$ satisfy 
$$\sum_{k=0}^2 \alpha_{k,2-k}^{JJ}(x_0,y_0)X^kY^{2-k} =x_0^2X^2+y_0^2Y^2 \quad \forall (x_0,y_0)\in\mathbb R^2,$$
so $\mathcal L_{JJ}$ satisfies Hypotheses \ref{hyp} and \ref{hyp2} as long as $xy\neq 0$.

Finally we consider a partial differential operator with non-polynomial coefficients and  anisotropy in the second order term:
$$\left\{
\begin{array}{l}
\mathcal L_{cs}:=
\nabla \cdot\begin{pmatrix}1&0.1\cos x\sin y\\0.1\cos x\sin y&-2\end{pmatrix}\nabla  -0.1\begin{pmatrix}\cos x(\phantom +\cos y)\\sin y(-\sin x)\end{pmatrix}\cdot\nabla  +(0.2\sin x \cos y-1) ,\\
{\color{white}\mathcal L_{cs}:}=
\dx^2+0.2\cos x\sin y \ \dx\dy-2\dy^2 +(0.2\sin x \cos y-1) ,\\
u_{cs}:(x,y)\mapsto \cos x\sin y ,\\
\Omega_{cs}:=(-1,1)^2.
\end{array}\right.
$$
We have $\mathcal L_{cs}u_{cs}=0$ on $\mathbb R^2$, all the coefficients of $\mathcal L_{cs}$ belong to $\mathcal C^\infty(\mathbb R^2)$ and the coefficients $\{\alpha_{k,2-k}^{cs};k=0,1,2\}$ satisfy 
$$\sum_{k=0}^2 \alpha_{k,2-k}^{cs}(x_0,y_0)X^kY^{2-k} 
=\left(1-\frac{(0.1)^2}{2}\cos^2 x_0\sin^2 y_0\right)X^2
 -2\left(Y-\frac{0.1}2 \cos x_0\sin y_0 X\right)^2
 \quad \forall (x_0,y_0)\in\mathbb R^2,$$
so $\mathcal L_{cs}$ satisfies Hypotheses \ref{hyp} and \ref{hyp2} on $\mathbb R^2$.

\subsection{Implementation of the construction algorithm}
For a linear second order operator 
$$
\mathfrak L_{2,\alpha} = 
\alpha_{2,0} \dx^2  + \alpha_{1,1} \dx\dy +\alpha_{0,2} \dy^2 
+\alpha_{1,0} \dx     + \alpha_{0,1}\dy
+\alpha_{0,0} 
$$ 
 the associated operator $\mathfrak L_{2,\alpha}^A$ is defined by
\begin{equation*}
\mathfrak L_{2,\alpha}^A P=
 \underbrace{\alpha_{2,0} \dx^2 P  + \alpha_{1,1} \dx \dy P + \alpha_{0,2} \dy^2 P}_{T_1}
+\underbrace{\alpha_{2,0}(\dx P)^2 + \alpha_{1,1}\dx P\dy P + \alpha_{0,2}(\dy P)^2}_{T_2}
+\underbrace{\alpha_{1,0} \dx P    + \alpha_{0,1}\dy P}_{T_3}.
\end{equation*}
The implementation of Algorithm \ref{algo} simply requires, at each level $\mfl$, the evaluation of $\{N_{I,\mfl - I},0\leq I\leq \mfl\}$ 
to apply formula \eqref{eq:ls}. At each level $\mfl$ the coefficeints $\{\mu_{ij},(i,j)\in\mathbb N^2, i+j\leq q+1\}$ of $Q_\mfl:=\sum_{0\leq i+j\leq \mfn+\mfl-1}\lambda_{i,j}(x-x_0)^i(y-y_0)^j$ are computed as
$$
\mu_{i,j}:=\left\{\begin{array}{l}
\lambda_{i,j} \text{ if } i+j\leq\mfl+1\\
0\text{ otherwise},
\end{array}\right.
$$
and for $0\leq I\leq \mfl$ the different contributions to $N_{I,\mfl - I}$ can be described as follows:
\begin{itemize}
\item the linear contributions from first order terms $T_3$
$$
-\sum_{i=0}^I\sum_{j=0}^{\mfl-I}
\left( 
\Dop{I-i}{\mfl-I-j}\alpha_{1,0}(x_0,y_0)
(i+1)\mu_{i+1,j}
+
\Dop{I-i}{\mfl-I-j}\alpha_{0,1}(x_0,y_0)
(j+1)\mu_{i,j+1}
\right)
$$
\item the non-linear contributions from the terms $T_2$
$$
\begin{array}{rl}
\displaystyle-\sum_{i_1=0}^I\sum_{j_1=0}^{\mfl-I}\sum_{i_2=0}^{i_1}\sum_{j_2=0}^{j_1}
&\displaystyle
\left( 
\Dop{I-i_1}{\mfl-I-j_1}\alpha_{2,0}(x_0,y_0)
(i_1-i_2+1)(i_2+1)\mu_{i_1-i_2+1,j_1-j_2}\mu_{i_2+1,j_2}
\right.\\
&\displaystyle\left.+
\Dop{I-i_1}{\mfl-I-j_1}\alpha_{1,1}(x_0,y_0)
(i_1-i_2+1)(j_2+1)\mu_{i_1-i_2+1,j_1-j_2}\mu_{i_2,j_2+1}
\right.\\
&\displaystyle\left.+
\Dop{I-i_1}{\mfl-I-j_1}\alpha_{0,2}(x_0,y_0)
(j_1-j_2+1)(j_2+1)\mu_{i_1-i_2,j_1-j_2+1}\mu_{i_2,j_2+1}
\right),
\end{array}
$$
\item the linear contributions from the second order terms $T_1$
$$
\begin{array}{rl}
\displaystyle-\sum_{i=0}^I\sum_{j=0}^{\mfl-I}
&\displaystyle
\left( 
\Dop{I-i}{\mfl-I-j}\alpha_{2,0}(x_0,y_0)
(i+2)(i+1)\mu_{i+2,j}
\right.\\
&\displaystyle\left.+
\Dop{I-i}{\mfl-I-j}\alpha_{1,1}(x_0,y_0)
(j+1)(i+1)\mu_{i+1,j+1}
\right.\\
&\displaystyle\left.+
\Dop{I-i}{\mfl-I-j}\alpha_{0,2}(x_0,y_0)
(j+2)(j+1)\mu_{i,j+2}
\right),
\end{array}
$$
\item the contribution from the zeroth order term $\alpha_{0,0}$
$$
-\Dop{I}{\mfl-I}\alpha_{0,0}(x_0,y_0).
$$
\end{itemize}

Moreover, all experiments are conducted with the following choice of angles $\theta_l$ and $\kappa$ parameters to build the GPW space $\VGPW$:
$$\left\{
\begin{array}{l}
\theta_l := \frac{\pi}{6}+\frac{2(l-1)\pi}{p},\ \forall l\in\mathbb N, 1\leq l\leq p,
\\
\kappa = \sqrt{-\alpha_{0,0}(x_0,y_0)}
\end{array}\right.
$$

\subsection{Numerical results}
The $h$-convergence results presented in Theorem \ref{thethm} are stated as local properties at a given point. In order to illustrate them, for each test case, we consider the following procedure.
\begin{itemize}
\item At each of 50 random points $(x_0,y_0)$ in the computational domain  $\Omega$
\begin{enumerate}
\item Construct the set of GPWs from Algorithm \ref{algo} with the normalization proposed in section \ref{sec:norm}.
\item Compute $u_a$ the linear combination of GPWs studied in the theorem's proof, matching its Taylor expansion to that of the exact solution.
\end{enumerate}
\item Estimate as a function of $h$ the maximum $L^\infty$ error on a disk of radius $h$ centered at the random point: $\max_{(x_0,y_0)\in\Omega}\|u-u_a\|_{L^\infty(\{(x,y)\in\mathbb R^2, |(x,y)-(x_0,y_0)|<h \})}$.
\end{itemize}
We always consider a space $\VGPW$ of $p=2n+1$ GPWs. According to the theorem, we expect to observe convergence of order $n+1$ if the approximation parameter $q$ in the construction of the basis functions is at least equal to $n-1$. For each of the four test cases proposed, we present: on the one hand results for $n$ from $1$ to $5$ with $q=\max(1,n-1)$ (Left panel); on the other hand results for $q$ from $1$ to $4$ with $n=4$ (Right panel). Hence with the first choice of parameters the theorem predicts convergence of order $n+1$, while with the second choice the theorem does not cover these cases.

The results are presented in Figure \ref{AdNR} for the approximation of $u_{Ad}$, Figure \ref{JcNR} for the approximation of $u_{Jc}$, Figure \ref{JJNR} for the approximation of $u_{JJ}$, and Figure \ref{csNR} for the approximation of $u_{cs}$. 
We observe on Figures \ref{AdNR} and \ref{csNR} the effect of the large condition number of the matrix $\mathsf M_n$: even though the expected orders of convergence are observed for large values of $h$, the error stagnates at a threshold for smaller values of $h$. 
We also observe, on the left panels of Figures  \ref{AdNR}, \ref{JcNR}, \ref{JJNR} and \ref{csNR}, that  for all of our test cases  the constant $C(\Omega,n)$ from \eqref{eq:approx} in Theorem \ref{thethm} does not seem to depend on $n$, even though the Theorem predicts that it does.
\begin{figure}
\begin{tikzpicture}
\begin{loglogaxis}[height=7cm,xlabel=$h$, ylabel=max error on disks of radius h,
legend pos=north west,
xmin=10^(-8),xmax=20,ymin=10^(-16),ymax=10,
xtick={1,0.01,.0001,.000001,.00000001},
very thick,cycle list name=color,grid=major]
\addplot table [x=h, y=errn1]{./GPWBAE2D2OthmCaseAixpy50pts.dat};
\addlegendentry{$n=1$ \& $q=1$}
\addplot table [x=h, y=errn2]{./GPWBAE2D2OthmCaseAixpy50pts.dat};
\addlegendentry{$n=2$ \& $q=1$}
\addplot table [x=h, y=errn3]{./GPWBAE2D2OthmCaseAixpy50pts.dat};
\addlegendentry{$n=3$ \& $q=2$}
\addplot table [x=h, y=errn4]{./GPWBAE2D2OthmCaseAixpy50pts.dat};
\addlegendentry{$n=4$ \& $q=3$}
\addplot table [x=h, y=errn5]{./GPWBAE2D2OthmCaseAixpy50pts.dat};
\addlegendentry{$n=5$ \& $q=4$}
\addplot[dotted] coordinates {(10^-6, 10^-11) (10^-1, 10^-1)};
\addlegendentry{order 2}
\addplot[dashed] coordinates {(3*10^-2, .5*10^-12) (3*10^-0, .5*10^-0)};
\addlegendentry{order 6}
\end{loglogaxis}
\end{tikzpicture}
\begin{tikzpicture}
\begin{loglogaxis}[height=7cm,xlabel=$h$, ylabel=max error on disks of radius h,
legend pos=north west,
xmin=10^(-8),xmax=20,ymin=10^(-16),ymax=10,
xtick={1,0.01,.0001,.000001,.00000001},
very thick,cycle list name=color,grid=major]
\addplot table [x=h, y=errn1]{./GPWBAE2D2On4CaseAixpy50pts.dat};
\addlegendentry{$n=4$ \& $q=1$}
\addplot table [x=h, y=errn2]{./GPWBAE2D2On4CaseAixpy50pts.dat};
\addlegendentry{$n=4$ \& $q=2$}
\addplot table [x=h, y=errn3]{./GPWBAE2D2On4CaseAixpy50pts.dat};
\addlegendentry{$n=4$ \& $q=3$}
\addplot table [x=h, y=errn4]{./GPWBAE2D2On4CaseAixpy50pts.dat};
\addlegendentry{$n=4$ \& $q=4$}
\addplot[dotted] coordinates {(10^-4, 10^-11) (10^-1, 10^-2)};
\addlegendentry{order 3}
\addplot[dashed] coordinates {(3*10^-2, 10^-12) (3*10^-0, 10^-2)};
\addlegendentry{order 5}
\end{loglogaxis}
\end{tikzpicture}
\caption{\capfig{{Ad}}}\label{AdNR}
\end{figure}
\begin{figure}
\begin{tikzpicture}
\begin{loglogaxis}[height=7cm,xlabel=$h$, ylabel=max error on disks of radius h,
legend pos=north west,
xmin=10^(-8),xmax=20,ymin=10^(-16),ymax=10,
xtick={1,0.01,.0001,.000001,.00000001},
very thick,cycle list name=color,grid=major]
\addplot table [x=h, y=errn1]{./GPWBAE2D2OthmCaseJ1xcosy50pts.dat};
\addlegendentry{$n=1$ \& $q=1$}
\addplot table [x=h, y=errn2]{./GPWBAE2D2OthmCaseJ1xcosy50pts.dat};
\addlegendentry{$n=2$ \& $q=1$}
\addplot table [x=h, y=errn3]{./GPWBAE2D2OthmCaseJ1xcosy50pts.dat};
\addlegendentry{$n=3$ \& $q=2$}
\addplot table [x=h, y=errn4]{./GPWBAE2D2OthmCaseJ1xcosy50pts.dat};
\addlegendentry{$n=4$ \& $q=3$}
\addplot table [x=h, y=errn5]{./GPWBAE2D2OthmCaseJ1xcosy50pts.dat};
\addlegendentry{$n=5$ \& $q=4$}
\addplot[dotted] coordinates {(10^-6, 10^-11) (10^-3, 10^-5)};
\addlegendentry{order 2}
\addplot[dashed] coordinates {(5*10^-2, .5*10^-12) (5*10^-1, .5*10^-6)};
\addlegendentry{order 6}
\end{loglogaxis}
\end{tikzpicture}
\begin{tikzpicture}
\begin{loglogaxis}[height=7cm,xlabel=$h$, ylabel=max error on disks of radius h,
legend pos=north west,
xmin=10^(-8),xmax=20,ymin=10^(-16),ymax=10,
xtick={1,0.01,.0001,.000001,.00000001},
very thick,cycle list name=color,grid=major]
\addplot table [x=h, y=errn1]{./GPWBAE2D2On4CaseJ1xcosy50pts.dat};
\addlegendentry{$n=4$ \& $q=1$}
\addplot table [x=h, y=errn2]{./GPWBAE2D2On4CaseJ1xcosy50pts.dat};
\addlegendentry{$n=4$ \& $q=2$}
\addplot table [x=h, y=errn3]{./GPWBAE2D2On4CaseJ1xcosy50pts.dat};
\addlegendentry{$n=4$ \& $q=3$}
\addplot table [x=h, y=errn4]{./GPWBAE2D2On4CaseJ1xcosy50pts.dat};
\addlegendentry{$n=4$ \& $q=4$}
\addplot[dotted] coordinates {(10^-4, 1*10^-11) (10^-1, 1*10^-2)};
\addlegendentry{order 3}
\addplot[dashed] coordinates {(2*10^-2, .5*10^-14) (2*10^-0, .5*10^-2)};
\addlegendentry{order 6}
\end{loglogaxis}
\end{tikzpicture}
\caption{
\capfig{{Jc}} }\label{JcNR}
\end{figure}
\begin{figure}
\begin{tikzpicture}
\begin{loglogaxis}[height=7cm,xlabel=$h$, ylabel=max error on disks of radius h,
legend pos=north west,
xmin=10^(-8),xmax=20,ymin=10^(-16),ymax=10,
xtick={1,0.01,.0001,.000001,.00000001},
very thick,cycle list name=color,grid=major]
\addplot table [x=h, y=errn1]{./GPWBAE2D2OthmCaseJ0xJ1y50pts.dat};
\addlegendentry{$n=1$ \& $q=1$}
\addplot table [x=h, y=errn2]{./GPWBAE2D2OthmCaseJ0xJ1y50pts.dat};
\addlegendentry{$n=2$ \& $q=1$}
\addplot table [x=h, y=errn3]{./GPWBAE2D2OthmCaseJ0xJ1y50pts.dat};
\addlegendentry{$n=3$ \& $q=2$}
\addplot table [x=h, y=errn4]{./GPWBAE2D2OthmCaseJ0xJ1y50pts.dat};
\addlegendentry{$n=4$ \& $q=3$}
\addplot table [x=h, y=errn5]{./GPWBAE2D2OthmCaseJ0xJ1y50pts.dat};
\addlegendentry{$n=5$ \& $q=4$}
\addplot[dotted] coordinates {(10^-6, 10^-10) (10^-3, 10^-4)};
\addlegendentry{order 2}
\addplot[dashed] coordinates {(2*10^-2, 10^-14) (2*10^-0, 10^-2)};
\addlegendentry{order 6}
\end{loglogaxis}
\end{tikzpicture}
\begin{tikzpicture}
\begin{loglogaxis}[height=7cm,xlabel=$h$, ylabel=max error on disks of radius h,
legend pos=north west,
xmin=10^(-8),xmax=20,ymin=10^(-16),ymax=10,
xtick={1,0.01,.0001,.000001,.00000001},
very thick,cycle list name=color,grid=major]
\addplot table [x=h, y=errn1]{./GPWBAE2D2On4CaseJ0xJ1y50pts.dat};
\addlegendentry{$n=4$ \& $q=1$}
\addplot table [x=h, y=errn2]{./GPWBAE2D2On4CaseJ0xJ1y50pts.dat};
\addlegendentry{$n=4$ \& $q=2$}
\addplot table [x=h, y=errn3]{./GPWBAE2D2On4CaseJ0xJ1y50pts.dat};
\addlegendentry{$n=4$ \& $q=3$}
\addplot table [x=h, y=errn4]{./GPWBAE2D2On4CaseJ0xJ1y50pts.dat};
\addlegendentry{$n=4$ \& $q=4$}
\addplot[dotted] coordinates {(5*10^-5, 10^-12) (5*10^-3, 10^-6)};
\addlegendentry{order 3}
\addplot[dashed] coordinates {(10^-2, .5*10^-14) (10^-0, .5*10^-2)};
\addlegendentry{order 6}
\end{loglogaxis}
\end{tikzpicture}
\caption{
\capfig{{JJ}} }\label{JJNR}
\end{figure}
\begin{figure}
\begin{tikzpicture}
\begin{loglogaxis}[height=7cm,xlabel=$h$, ylabel=max error on disks of radius h,
legend pos=north west,
xmin=10^(-8),xmax=20,ymin=10^(-16),ymax=10,
xtick={1,0.01,.0001,.000001,.00000001},
very thick,cycle list name=color,grid=major]
\addplot table [x=h, y=errn1]{./GPWBAE2D2OthmCasecosxsiny50pts.dat};
\addlegendentry{$n=1$ \& $q=1$}
\addplot table [x=h, y=errn2]{./GPWBAE2D2OthmCasecosxsiny50pts.dat};
\addlegendentry{$n=2$ \& $q=1$}
\addplot table [x=h, y=errn3]{./GPWBAE2D2OthmCasecosxsiny50pts.dat};
\addlegendentry{$n=3$ \& $q=2$}
\addplot table [x=h, y=errn4]{./GPWBAE2D2OthmCasecosxsiny50pts.dat};
\addlegendentry{$n=4$ \& $q=3$}
\addplot table [x=h, y=errn5]{./GPWBAE2D2OthmCasecosxsiny50pts.dat};
\addlegendentry{$n=5$ \& $q=4$}
\addplot[dotted] coordinates {(2*10^-6, 10^-10) (2*10^-3, 10^-4)};
\addlegendentry{order 2}
\addplot[dashed] coordinates {(1*10^-2, 5*10^-14) (1*10^-0, 5*10^-2)};
\addlegendentry{order 6}
\end{loglogaxis}
\end{tikzpicture}
\begin{tikzpicture}
\begin{loglogaxis}[height=7cm,xlabel=$h$, ylabel=max error on disks of radius h,
legend pos=north west,
xmin=10^(-8),xmax=20,ymin=10^(-16),ymax=10,
xtick={1,0.01,.0001,.000001,.00000001},
very thick,cycle list name=color,grid=major]
\addplot table [x=h, y=errn1]{./GPWBAE2D2On4Casecosxsiny50pts.dat};
\addlegendentry{$n=4$ \& $q=1$}
\addplot table [x=h, y=errn2]{./GPWBAE2D2On4Casecosxsiny50pts.dat};
\addlegendentry{$n=4$ \& $q=2$}
\addplot table [x=h, y=errn3]{./GPWBAE2D2On4Casecosxsiny50pts.dat};
\addlegendentry{$n=4$ \& $q=3$}
\addplot table [x=h, y=errn4]{./GPWBAE2D2On4Casecosxsiny50pts.dat};
\addlegendentry{$n=4$ \& $q=4$}
\addplot[dotted] coordinates {(2*10^-4, 10^-10) (2*10^-1, 10^-1)};
\addlegendentry{order 3}
\addplot[dashed] coordinates {(10^-2, 10^-14) (10^-0, 10^-2)};
\addlegendentry{order 6}
\end{loglogaxis}
\end{tikzpicture}
\caption{
\capfig{{cs}}}\label{csNR}
\end{figure}

We summarize in the following table the orders of convergence observed, always using $\VGPW$ with $p=2n+1$. The bold entries correspond to cases covered by Theorem \ref{thethm} i.e. $n+1$ for $q\leq n-1$, and the red entries correspond to cases with order of convergence observed higher than the theorem predicts.
$$
\begin{array}{c||c|c|c|c|c|}
q\backslash n & 1&2&3&4&5\\\hline\hline
1&\bf 2 & \bf 3 &     3 &     3/4 &     3 \\\hline
2&\bf 2 & \bf 3 & \bf 4 & \geq 4 & \geq 4 \\\hline
3&\bf 2 & \bf 3 & \bf 4 & \textcolor{red}{\bf \geq 5} &     5 \\\hline
4&\bf 2 & \bf 3 & \bf 4 & \textcolor{red}{\bf \geq 5} & \bf 6 \\\hline
\end{array}$$
We can see from this table that in all cases covered by the theorem, we observe a convergence rate equal or slightly better than predicted. But it would seem that the hypotheses of the theorem are sharp.

\section{Conclusion}
In this work we have considered local properties in the neighborhood of a point $(x_0,y_0)\in\mathbb R^2$, for an operator $\Lal$. To summarize, we followed the steps announced in the introduction:
\begin{enumerate}
\item construction of GPWs $\varphi$ such that $\Lal\varphi(x,y)= O\left(\| (x,y)-(x_0,y_0) \|^q\right)$
\begin{enumerate}
\item choose an ansatz for $\displaystyle \varphi(x,y)=\exp  \sum_{0\leq i+j\leq d P} \luij (x-x_0)^i(y-y_0)^j$
\item identify the corresponding $N_{dof} = \frac{(dP+1)(dP+2)}{2}$ degrees of freedom, and $N_{eqn} = \frac{q(q+1)}{2}$ constraints, namely respectively
$$ 
\begin{array}{l}
\{\luij ; (i,j)\in\mathbb N^2,0\leq i+j\leq d P \},\\
\{\mathcal D^{(I,J)} \Lal\varphi(x_0,y_0) = 0 ;  (I,J)\in\mathbb N^2,0\leq I+J< q\}.
\end{array}
$$
\item 
 for $dP=q+\mfn-1$,  the number of degrees of freedom is $N_{dof}=\frac{(\mfn+q)(\mfn+q+1)}{2}> N_{eqn}$ and this ensures that there are linear terms in all the constraints
\item 
identify $N_{dof}-N_{eqn}= \mfn q+ \frac{\mfn(\mfn+1)}{2}$ additional constraints, namely
$$
\text{Fixing } \{\lambda_{i,j}, (i,j)\in\mathbb N^2, i+j<q+\mfn \text{ and } i<\mfn\}
$$
to obtain a global system that can be split into a hierarchy of linear triangular subsystems
\item compute the remaining $N_{eqn}$ degrees of freedom by forward substitution for each triangular subsystem, therefore at minimal computational cost
\end{enumerate}
\item\label{interp} interpolation properties
\begin{enumerate}
\item\label{props} thanks to the normalization, in particular $\{\lambda_{i,j} = 0, (i,j)\in\mathbb N^2, i+j<\mfn +q\text{ and } i<\mfn, i+j\neq 1\}$,
study the properties of the remaining $N_{eqn}$ degrees of freedom, that is $\{\lambda_{i,j} , (i,j)\in\mathbb N^2, i+j<\mfn +q\text{ and } i\geq \mfn\}$, with respect to $(\luoz,\luzo)$ 
\item identify a simple reference case depending only on two parameters, that is basis functions $\displaystyle \phi(x,y)=\exp  \luoz (x-x_0) + \luzo(y-y_0)$ depending only on the choice of  $(\luoz,\luzo)$, independently of $\phi$ being an exact solution to the constant coefficient equation
\item study the interpolation properties of this reference case with classical PW techniques
\item relate the general case to the reference case thanks to \ref{props}
\item prove the interpolation properties of the GPWs from those of the reference case
\end{enumerate}
\end{enumerate}
This construction process guarantees that the GPW function $\varphi$ satisfies the approximate Trefftz property $\Lal\varphi(x,y)= O\left(\| (x,y)-(x_0,y_0) \|^q\right)$ independently of the normalization, that is the values chosen for $\{\lambda_{i,j}, (i,j)\in\mathbb N^2, i+j<\mfn \}$, while the proof of interpolation properties heavily rely on the normalization.

This work focuses on interpolation of solutions of a PDE, and is limited to local results, in the neighborhood of a given point. In order to address the convergence of a numerical method for a boundary value problem on a domain $\Omega$ with a GPW-discretized Trefftz method, on a mesh $\mathcal T_h$ of $\Omega$, we will consider a space $\mathbb V_h$ of GPWs built element-wise, at the centroid $(x_0,y_0)=(x_K,y_K)$ of each element $K\in\mathcal T_h$, to study interpolation properties on $\Omega$. In particular, meshing the domain $\Omega$ to respect any discontinuity in the coefficients, the interpolation error on $\Omega$,  $\|(I-P_{\mathbb V_h})\|$, will converge at the same order as the local interpolation error on each element, and the crucial point will be to investigate the behavior of the constant $C(\Omega,n)$ from Theorem \ref{thethm}.
Related computational aspects of the construction of GPWs proposed in this work are currently under study.

\appendix

\section{Chain rule in dimension 1 and 2}
For the sake of completeness, this section is dedicated to describing the formula to derive a composition of two functions, in dimensions one and two. 
A wide bibliography about this formula is to be found in \cite{ma}. It is linked to the notion of partition of an integer or the one of a set. The 1D version is not actually used in this work but is displayed here as a comparison with a 2D version, mainly concerning this notion of partition.
\subsection{Faa Di Bruno Formula}\label{app:FDBf}
Faa Di Bruno formula gives the $m$th derivative of a composite function with a single variable. It is named after Francesco Faa Di Bruno, but was stated in earlier work of Louis F.A. Arbogast around 1800, see \cite{007}.

If $f$ and $g$ are functions with sufficient derivatives, then
$${
\frac{d^m}{dx^m}f(g(x)) = m!\sum f^{(\sum_k b_k)} (g(x)) \prod_{k=1}^{m} \frac{1}{b_k!}\left(  \frac{g^{(k)}(x)}{k!} \right)^{b_k},
}$$
where the sum is over all different solutions in nonnegative integers $(b_k)_{k\in[\![1,m]\!]}$ of $\sum_k k b_k = m$. These solutions are actually the partitions of $m$.

\subsection{Bivariate version}\label{app:bivFDB}
The multivariate formula has been widely studied, the version described here is the one from \cite{CS} applied to dimension $2$. A linear order on $\mathbb N^2$ is defined by: $\forall (\mu,\nu)\in\left(\mathbb N^2\right)^2$, the relation $\mu\prec\nu$ holds provided that 
\begin{enumerate}
\item $\mu_1+\mu_2<\nu_1+\nu_2 $; or
\item $\mu_1+\mu_2=\nu_1+\nu_2 $ and $\mu_1<\nu_1$.
\end{enumerate}
If $f$ and $g$ are functions with sufficient derivatives, then
$${
\dx^i \dy^j 
f(g(x,y)) =i!j! \sum_{1\leq\mu\leq i+j} f^{\mu}(g(x,y)) \sum_{s=1}^{i+j} \sum_{p_s((i,j),\mu)} \prod_{l=1}^s \frac{1}{k_l!}\left( \frac{1}{i_l!j_l!} \dx^{i_l}\dy^{j_l} 
 (g(x,y))\right)^{k_l},
}$$
where the partitions of $(i,j)$ are defined by the following sets: $\forall \mu\in[\![1,i+j]\!]$, $\forall s\in[\![1,i+j]\!]$, $p_s((i,j),\mu)$ is equal to
$${
\left\{ (k_1,...,k_s;(i_1,j_1),\cdots,(i_s,j_s)):k_i>0,0\prec(i_1,j_1)\prec\cdots\prec(i_s,j_s),
\sum_{l=1}^s k_l=\mu,\sum_{l=1}^s k_li_l=i,\sum_{l=1}^s k_lj_l=j\right\}.
}$$
See \cite{ardi} for a proof of the formula interpreted in terms of collapsing partitions.

%

\section{Faa di Bruno}
The multivariate formula has been widely studied, the version described here is the one from \cite{CS} applied to dimension $2$. A linear order on $\mathbb N^2$ is defined by: $\forall (\mu,\nu)\in\left(\mathbb N^2\right)^2$, the relation $\mu\prec\nu$ holds provided that 
\begin{enumerate}
\item $\mu_1+\mu_2<\nu_1+\nu_2 $; or
\item $\mu_1+\mu_2=\nu_1+\nu_2 $ and $\mu_1<\nu_1$.
\end{enumerate}
If $f$ and $g$ are functions with sufficient derivatives, then
$$
\dx^i \dy^j 
f(g(x,y)) =i!j! \sum_{1\leq\mu\leq i+j} f^{(\mu)}(g(x,y)) \sum_{s=1}^{i+j} \sum_{p_s((i,j),\mu)} \prod_{l=1}^s \frac{1}{k_l!}\left( \frac{1}{i_l!j_l!} \dx^{i_l}\dy^{j_l} 
 (g(x,y))\right)^{k_l},
$$
$$
\dx^k \dy^{\ell-k}  
e^{P(x,y)} =k!{(\ell-k)}! \sum_{1\leq\mu\leq \ell} e^{P(x,y)} \sum_{s=1}^{\ell} \sum_{p_s((k,\ell-k),\mu)} \prod_{m=1}^s \frac{1}{k_m!}\left( \frac{1}{i_m!j_m!} \dx^{i_m}\dy^{j_m}  P(x,y)\right)^{k_m},
$$
where the partitions of $(i,j)$ are defined by the following sets: $\forall \mu\in[\![1,i+j]\!]$, $\forall s\in[\![1,i+j]\!]$, $p_s((i,j),\mu)$ is equal to
$$
\left\{ (k_1,...,k_s;(i_1,j_1),\cdots,(i_s,j_s)):k_i>0,0\prec(i_1,j_1)\prec\cdots\prec(i_s,j_s),
\sum_{l=1}^s k_l=\mu,\sum_{l=1}^s k_li_l=i,\sum_{l=1}^s k_lj_l=j\right\}.
$$
Note that $s$ is the number of different terms appearing in  the product, while $\mu$ is the number of terms in the product counting multiplicity, $k_m$ is the multiplicity of the $m$th term in the product, while $p_s$ represents the possible partitions of $(i,j)$.

Note that since $k_m>0$, the condition $\sum_{m=1}^s k_m=\mu$ implies that $\mu = \sum_{m=1}^s k_m\geq  \sum_{m=1}^s 1 = s$.

\section{Polynomial formulas}\label{PolFor}
Here are two important comments.
The first one concerns the product of polynomials. Assume that $\min(D_1,D_2)\geq q$. Then the product of two polynomials, respectively of degree $D_1$ and $D_2$, satisfies:
\begin{equation*}
\left(\sum_{i_1=0}^{D_1}\sum_{j_1=0}^{D_1-i_1} p_{i_1,j_1}x^{i_1}y^{j_1}\right)
\left(\sum_{i_2=0}^{D_2}\sum_{j_2=0}^{D_2-i_2} q_{i_2,j_2}x^{i_2}y^{j_2}\right)
=
\sum_{i=0}^{q-1}\sum_{j=0}^{q-1-i}
\left(\sum_{\tilde i=0}^i\sum_{\tilde j=0}^j p_{i-\tilde i,j-\tilde j}q_{\tilde i,\tilde j} \right)
x^iy^j+O(h^q).
\end{equation*}
 Since in particular the summation indices are such that $0\leq \tilde i\leq i$, $0\leq i-\tilde i\leq i$, $0\leq \tilde j\leq j$, and $0\leq j-\tilde j\leq j$, the only coefficients $p_{i,j}$ and $q_{i,j}$ appearing in the $(I_0,J_0)$ coefficient of the product have a length of the multi-index $i+j\leq I_0+J_0$. As a consequence, the only coefficients of several polynomials appearing in the $(I_0,J_0)$ coefficient of the product these several polynomials have a length of the multi-index $i+j\leq I_0+J_0$. The second comment turns to the derivative of a polynomial:
\begin{equation*}
\dx^I\dy^J\left(\sum_{i=0}^{D}\sum_{j=0}^{D-i} p_{i,j}x^{i}y^{j}\right)
=
\sum_{i=0}^{D-I-J}\sum_{j=0}^{D-I-J-i}
\frac{(i+I)!}{i!}\frac{(j+J)!}{j!} 
p_{i+I,j+J}
x^{i}y^{j}.
\end{equation*}
In particular the only coefficients $p_{i,j}$ appearing in the $(I_0,J_0)$ coefficient of the derivative has a length of the multi-index $i+j = I+J+I_0+J_0$.


\end{document}